\DeclareMathOperator{\SB}{SB} 
\DeclareMathOperator{\CH}{CH}  
\DeclareMathOperator{\K0}{K_0}   
\DeclareMathOperator{\codim}{codim} 
\DeclareMathOperator{\im}{im} 
\DeclareMathOperator{\ind}{ind} 
\DeclareMathOperator{\id}{id} 
\DeclareMathOperator{\res}{res} 
\DeclareMathOperator{\rrank}{rrank}  
\DeclareMathOperator{\rk}{rk} 
\DeclareMathOperator{\coker}{coker}  
\DeclareMathOperator{\Br}{Br} 
\DeclareMathOperator{\End}{End} 
\DeclareMathOperator{\Hom}{Hom} 
\DeclareMathOperator{\Gr}{Gr}  
\DeclareMathOperator{\Pic}{Pic}  
\DeclareMathOperator{\PGL}{PGL} 
\DeclareMathOperator{\Sing}{Sing} 
\DeclareMathOperator{\Gal}{Gal} 
\DeclareMathOperator{\Tors}{Tors} 
\DeclareMathOperator{\diag}{diag} 
\newcommand{\ZZ}{\mathbb{Z}}   
\newcommand{\Z}{\mathbb{Z}}  
\newcommand{\Pj}{\mathbb{P}}  
\newcommand{\Gm}{\mathbb{G}_m}  
\newcommand{\EE}{\mathcal{E}}  
\newcommand{\LL}{\mathcal{L}}   
\newcommand{\fA}{\mathfrak{A}} 
\newcommand{\fB}{\mathfrak{B}} 
\newcommand{\rA}{\mathbf{A}} 
\newcommand{\into}{\hookrightarrow} 
\newtheorem{thm}{Theorem}[section]
\newtheorem*{thm*}{Theorem}
\newtheorem{prop}[thm]{Proposition}
\newtheorem*{prop*}{Proposition}
\newtheorem{lem}[thm]{Lemma}
\newtheorem{cor}[thm]{Corollary}
\newtheorem*{cor*}{Corollary}
\theoremstyle{definition}   
\newtheorem{defn}[thm]{Definition}
\newtheorem{rem}[thm]{Remark}
\newtheorem*{ex*}{Example}
\title{Schubert cycles and subvarieties of generalized Severi-Brauer varieties}
\author{Caroline Junkins}
\author{Daniel Krashen}
\author{Nicole Lemire}
\date{}
\begin{document}

\maketitle

\section{Introduction}

For a central simple algebra $A$ of degree $n$ over a field $F$, the generalized Severi-Brauer variety $\SB(d, A)$ is a
twisted form of $\Gr(d, n)$, the Grassmannian of $d$-dimensional planes in
$n$-dimensional affine space. It is well-known that the variety $\SB(d, A)$
has a rational point over an extension $K/F$ if and only if $\ind(A_K)\mid
d$. We can extend this question to ask about other closed subvarieties of
$\SB(d, A)$. In particular, we may ask: Under what conditions does $\SB(d,
A)$ contain a closed subvariety which is a twisted form of a Schubert
subvariety of $\Gr(d, n)$? We show in this paper that this happens
exactly when the index of the algebra divides a certain number arising
from the combinatorics of the Schubert cell, using a variation on Fulton's
notion of essential set of a partition (see Definition~\ref{corner def},
\ref{corner dim def}):
\begin{thm*}[\ref{thm:all}]
Let $A$ be a central simple algebra, and $K/F$ a splitting field for $A$.
Then the generalized Severi-Brauer variety $\SB(d, A)$ has a closed subvariety
$P$ such that $P\otimes_F K\simeq X_{\lambda}$ for a Schubert subvariety
$X_\lambda$ if and only if $\ind(A)\mid \gcd(\overline{E_\lambda})$.
Moreover, in this case, $A$ contains a flag of right ideals $I_{a_1}\subset
\cdots \subset I_{a_r}$ for $\overline{E_{\lambda}}=\{a_1, \dots, a_r\}$ such that
for any finite extension $L/F$, 
$$
P(L)=\{J\subseteq A_L: \rk(J\cap (I_a)_L)\ge j \mbox{ for } (j,a)\in
E_{\lambda}\}
$$
\end{thm*}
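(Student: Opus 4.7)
The plan is to set up a correspondence between closed subvarieties $P\subseteq \SB(d,A)$ with $P_K\simeq X_\lambda$ and flags of right ideals $I_{a_1}\subset\cdots\subset I_{a_r}\subset A$ of reduced ranks $a_i$, where $\overline{E_\lambda}=\{a_1,\ldots,a_r\}$. The tools are the identification $\SB(d,A)_K\simeq \Gr(d,n)$; the Morita bijection (after choosing a splitting $A_K\simeq M_n(K)$) between right ideals of $A_K$ of reduced rank $a$ and $a$-dimensional subspaces of $K^n$; and the basic fact that a right ideal of reduced rank $a$ exists in $A$ if and only if $\ind(A)\mid a$. Once the correspondence is in place, the divisibility criterion $\ind(A)\mid \gcd(\overline{E_\lambda})$ reduces to this existence question.

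\textbf{Sufficiency.} Assuming $\ind(A)\mid a_i$ for each $i$, Wedderburn theory lets me construct a flag of right ideals $I_{a_1}\subset\cdots\subset I_{a_r}$ in $A$ with $\rrank(I_{a_i})=a_i$ (starting from a flag of submodules of the canonical module of the underlying division algebra). Define $P\subseteq \SB(d,A)$ by the closed rank conditions $\rk(J\cap (I_a)_L)\ge j$ for $(j,a)\in E_\lambda$. Over $K$, via the chosen splitting, $(I_{a_i})_K$ becomes a flag $F_\bullet$ of subspaces of $K^n$, and the rank conditions translate exactly to Fulton's essential-set defining conditions for $X_\lambda$ with respect to $F_\bullet$, so $P_K\simeq X_\lambda$.

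\textbf{Necessity and main obstacle.} Conversely, suppose $P\subseteq \SB(d,A)$ with $P_K\simeq X_\lambda$ as subvarieties of $\Gr(d,n)\simeq \SB(d,A)_K$. The crux of the proof --- and the main technical obstacle --- is a canonical reconstruction of the reference flag $F_\bullet$ from the embedded Schubert variety $X_\lambda\hookrightarrow \Gr(d,n)$. The plan is to exhibit, for each $a\in \overline{E_\lambda}$, the subspace $F_a\subseteq K^n$ as an intrinsic invariant of $X_\lambda$; for instance as the linear span of the $d$-planes parametrized by a distinguished locus of $X_\lambda$ on which the rank condition at $a$ is exact, whose nonemptiness and unambiguity are guaranteed by the minimality built into Fulton's essential set. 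Because the reconstruction is intrinsic and $P$ is defined over $F$, the resulting flag is $\Gal(K/F)$-stable and descends via Morita to a flag of right ideals $I_{a_i}\subset A$ with $\rrank(I_{a_i})=a_i$. This forces $\ind(A)\mid a_i$ for each $i$ and simultaneously realizes $P$ by the formula in the statement. The delicate point is making the reconstruction truly functorial in the base field, so that Galois descent applies, and verifying that it yields ideals of exactly the required reduced ranks rather than something coarser.
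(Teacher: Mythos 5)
Your sufficiency argument matches the paper's (Proposition~\ref{prop:easy}), and your framework for necessity---reconstruct the reference flag intrinsically, then descend by Galois---is the right skeleton: the paper also recovers $F_a$ intrinsically for the \emph{outer} corners of $E_\lambda$, namely those $(a,a)$ or $(d,a)$, by taking $\bigcap_{J\in P_K(K)} J$ and $\sum_{J\in P_K(K)} J$ (Theorem~\ref{thm:smooth}). But the specific reconstruction you propose for the remaining (inner) corners does not work, and this is precisely where the real difficulty of the theorem sits.

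Take $(j,a)\in E_\lambda$ with $j<d$ and $j<a$, i.e.\ $(j,a)\in S_\lambda$. You suggest recovering $F_a$ as the linear span of the $d$-planes $J$ on which ``the rank condition at $a$ is exact,'' i.e.\ $\dim(J\cap F_a)=j$. That locus is generically the \emph{smooth} locus of $X_\lambda$ and the span it produces is far too large. Concretely, in $\Gr(2,4)$ with $\lambda=[1,0]$ one has $E_\lambda=\{(1,2)\}$ and $X_\lambda=\{J:\dim(J\cap F_2)\geq 1\}$; the locus where $\dim(J\cap F_2)=1$ is open and dense, and those $J$'s span all of $K^4$, not $F_2$. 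No ``minimality of the essential set'' repairs this. The minimality of Fulton's set governs which intersection conditions are non-redundant; it says nothing about how to invert the construction to extract $F_a$ from the rank-$j$ locus.

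What actually recovers $F_a$ is the opposite locus---where the rank jumps to its maximum---and this is the singular locus. The paper develops this carefully: $\Sing(X_\lambda)$ is a union of Schubert varieties $X_\mu$ with $\mu$ obtained from $\lambda$ by adding a hook, and this is a canonical (hence Galois-stable) subvariety of $P$. Lemma~\ref{lem:replacement} shows that by adding the right number of hooks one turns each inner corner $(j,a)\in S_\lambda$ into an outer corner of some $\mu$, with $a\in\overline{E_\mu}\setminus\overline{S_\mu}$; Proposition~\ref{prop:addinghooks} gives an $F$-form $Z\subset P$ of $X_\mu$, and then Corollary~\ref{cor:smoothenough} (the smooth-corner case) yields $\ind(A)\mid a$. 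Without something equivalent to this singular-locus iteration, your necessity argument has a genuine gap. A second, smaller omission: you assert the explicit formula for $P(L)$ but do not prove $P$ actually equals the variety $P_\lambda$ it defines; the paper proves the inclusion $P\hookrightarrow P_\lambda$ on $L$-points and then argues $\coker$ is a form of $0$, hence zero.
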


In the classical setting, these Schubert subvarieties are of particular interest as they form the building
blocks for the Grothendieck group and Chow group of $\Gr(d, n)$. The
Chow groups, in the case of homogeneous varieties, and Severi-Brauer varieties
in particular, have been much studied, and
related to important questions about the arithmetic of central simple
algebras. Although the Chow
groups of dimension $0$, codimension $1$ and to some extent codimension $2$
cycles on Severi-Brauer varieties have been amenable to study, the other
groups are in general not very well understood at all. 

Algebraic cycles on and Chow groups of
generalized Severi-Brauer varieties are even more subtle and less understood. For example, the Chow
group of dimension $0$ cycles on such varieties are only known in the
case of reduced dimension $2$ ideals in algebras of period $2$
\cite{CM06, Kr10}.

In this paper we make some first steps towards developing parallel methods
as currently exist for the Severi-Brauer varieties, to compute the
codimension $2$ Chow groups for the generalized Severi-Brauer varieties of
reduced dimension $2$ ideals in certain algebras of small index.

\begin{thm*} \ref{cycle thm}
Let $X=\SB(2,A)$ with $\ind(A)|12$. Then $\CH^2(X)$ is torsion-free.
\end{thm*}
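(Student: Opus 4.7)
The plan is to compute $\CH^2(X)$ by restricting to a splitting field of $A$. Let $K/F$ be a splitting field for $A$, so $X_K \simeq \Gr(2,n)$ for $n=\deg A$, and $\CH^2(X_K)$ is torsion-free of rank at most two, generated by the codimension-two Schubert classes $\sigma_{(2)}$ and $\sigma_{(1,1)}$ (those that fit in the $2\times(n-2)$ rectangle). The identity $\cores\circ\res=[K:F]\cdot\id$ shows that $\ker(\res\colon\CH^2(X)\to\CH^2(X_K))$ is torsion; since the target is torsion-free, $\CH^2(X)_{\tors}=\ker(\res)$. Thus the theorem reduces to showing that $\res$ is injective.

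The first step is to identify the image of $\res$ explicitly. For each of the relevant partitions $\lambda=(2)$ and $\lambda=(1,1)$, Theorem~\ref{thm:all} gives the exact combinatorial criterion $\ind(A)\mid\gcd(\overline{E_\lambda})$ for the existence of a twisted Schubert subvariety on $X$. Computing $\overline{E_\lambda}$ for both partitions and running through each value of $\ind(A)\in\{1,2,3,4,6,12\}$ produces explicit twisted subvarieties whose fundamental classes restrict to $\sigma_\lambda$, or to a prescribed integer multiple, on $X_K$. This yields a sublattice $L\subseteq\CH^2(X_K)$ contained in $\im(\res)$, and one expects — on grounds of Galois equivariance and compatibility with the intersection product against Chern classes of the tautological bundle — that $\im(\res)=L$.

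The second, and harder, step is to show $\ker(\res)=0$. The natural tool is the Brown-Gersten-Quillen spectral sequence together with Bloch's formula, which presents the torsion subgroup of $\CH^2(X)$ as a subquotient of $H^1(X,\mathcal{K}_2)$, an object controlled by Brauer-type invariants of $A$ over function fields of subvarieties of $X$. Under the hypothesis $\ind(A)\mid 12$ only the primes $2$ and $3$ (with prime-power indices $4$ and $3$) intervene, and the potential torsion can be ruled out using Merkurjev-Suslin type results, Peyre's computations of unramified cohomology, and known vanishing results of Karpenko and Merkurjev for torsion in codimension-two Chow groups of twisted flag varieties of small index. Reduction to the ordinary Severi-Brauer variety $\SB(A)$ via a projection or motivic decomposition is likely to be part of the argument.

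The main obstacle is precisely this kernel-vanishing step: producing cycles that realize $L$ is a direct application of Theorem~\ref{thm:all}, but excluding hidden torsion requires genuine K-cohomological input that uses the small-index hypothesis essentially. In practice the argument will probably split into a case analysis across the divisors of $12$, with the case $\ind(A)=12$ (where $2$- and $3$-primary contributions interact) being the most delicate; the other cases of prime-power index should reduce to already-known vanishing results.
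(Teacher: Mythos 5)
Your overall architecture is sensible but the technical core is missing, and you propose the wrong tool for it. You correctly reduce the problem to showing $\ker(\res^2)=0$ and correctly anticipate that some reduction to the primary/division-algebra case will be involved. But the paper does \emph{not} rule out hidden torsion via BGQ, Bloch's formula, Merkurjev--Suslin, or unramified cohomology; none of these appear. The actual mechanism is much more elementary: Karpenko's counting argument (Proposition~\ref{prop:karpenko}), which says that when $\K0(X)\to\K0(\overline X)$ is injective, the total torsion $|\Tors T^*(X)|$ equals the \emph{ratio} of $|T^*(\overline X)/\im T^*(X)|$ to $|\K0(\overline X)/\K0(X)|$. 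The denominator is computed explicitly from Quillen's formula, so "no hidden torsion" becomes a concrete arithmetic check, once one has enough explicit rational cycles to pin down the numerator.

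That explicitness is the second thing you underestimate. Theorem~\ref{thm:all} alone does \emph{not} produce enough rational cycles for the count to close. Proposition~\ref{prop:plucker} needs the extra cycle coming from the Pl\"ucker embedding $\SB(2,A)\to\SB(B)$, where $[B]=[A^{\otimes 2}]$ by Artin's computation; because $\ind(A^{\otimes 2})\mid 2$ when $\ind(A)\mid 4$, the pullback of a power of the hyperplane class gives $\sigma_1^2=\sigma_{1,1}+\sigma_2\in\im(\res^2)$ with coefficient $1$, which Proposition~\ref{prop:gr24} does not yield in the $\exp(A)=4$ case. Without this extra cycle the numerator in Karpenko's formula is too big and the argument fails. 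You gesture at "Galois equivariance and compatibility with the intersection product against Chern classes" producing the right lattice, but this is not substantiated and is not how the image is actually pinned down.

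Finally, your intuition that the $\ind(A)=12$ case is the hardest because of interacting $2$- and $3$-primary parts is exactly backwards. The paper first proves a motivic decomposition (via Brosnan) giving $\CH^2(\SB(2,A))\cong\CH^2(\SB(2,D))$ for the Brauer-equivalent division algebra $D$, and then a Karpenko-style transfer argument showing $\Tors\CH^2(\SB(2,A))=\bigoplus_{p\mid\deg A}\Tors\CH^2(\SB(2,A_p))$. So index $12$ decouples completely into the index $4$ and index $3$ subcases, each of which is handled separately (index $2$ is trivial as $\dim\SB(2,D)=0$; index $3$ reduces to $\SB(D^{\op})$ for a degree-$3$ division algebra, already known; index $4$ is Proposition~\ref{prop:plucker}). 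Your proposal thus has the right skeleton but lacks the three concrete ingredients --- Karpenko's counting proposition, the Pl\"ucker cycle, and the Brosnan decomposition --- that make the argument actually work, and replaces them with a sketch of a heavier K-cohomological route that is never carried out.
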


This computation is done first by using the explicit descriptions of
Schubert classes obtained in the first half part of the paper together with
other geometric constructions to show that the graded pieces of the
$K$-groups with respect to the topological filtration are torsion free
\ref{prop:plucker} for degree $4$ algebras. This quickly gives the result for
codimension $2$ Chow groups for such varieties. Finally,
the theorem follows by an analysis of the motivic decomposition of the Chow
motive of $\SB(2,A)$ due to Brosnan.


\section{Schubert varieties and their twisted forms}


\subsection{Generalized Severi-Brauer varieties}

Let $A$ be a central simple algebra over a field
$F$. By Wedderburn's theorem, such an
algebra can be written uniquely as $A\simeq \End_D(V)$ for a division
algebra $D$ over $F$ and a vector space $V$ over $D$. The degree of $A$ is
defined by $\deg(A)=\sqrt{\dim_F(A)}$ and the index of $A$ is defined by
$\ind(A)=\deg(D)$.

If $A=\End_D(V)$, there is a one-to-one correspondence between the right
ideals of $A$ and the vector subspaces of $V$ over $D$, given by sending a
subspace $W\subseteq V$ to the right ideal $\Hom_D(V, W)$. Defining the
reduced rank of a right ideal $I$ of $A$ by
$\rrank(I):=\frac{\dim_F(I)}{\deg(A)}$, for $W\subseteq V$ with
$\dim_D(W)=k$, we obtain $\rrank(\Hom_D(V, W))=k\cdot \ind(A)$.

The generalized Severi-Brauer variety $\SB(d, A)$ is defined to be the
variety with $L$-points given by
$$
\SB(d, A)(L)=\{I\subset A_L \mid \rrank(I)=d\}
$$ 

Note that in the case $A = \End(V)$, for an $F$-vector space $V$, the
above gives us a natural bijection between the right ideals of $A$ and the
linear subspaces $W \subset V$. It follows that for
an arbitrary $A$, the variety $\SB(d, A)$ is a twisted form of $\Gr(d,
n)$, the Grassmannian of $d$-dimensional planes in $n$-dimensional affine
space. By \cite{Bl}, $\SB(d, A)$ has a rational point over an extension
$K/F$ if and only if $\ind(A_K)\mid d$.


\subsection{Twisted forms of Schubert varieties}

Consider a partition $\lambda=[\lambda_1, \dots, \lambda_d]$ with $n-d\geq
\lambda_1\geq \lambda_2\geq \cdots \geq \lambda_d\geq  0$. Such a partition
can be represented by a Young diagram with $\lambda_j$ boxes in the $j$-th
row.

Given a split central simple algebra $A=\End_F(V)$ of degree $n$, we
fix a chain of ideals 
$$
I_1\subset I_2\subset \dots \subset I_n=A\;\;\;\;\;\text{such that $\rrank(I_j)=j$.}
$$ 

The Schubert variety $X_{\lambda}\subseteq \SB(d, \End_F(V))$ is defined by intersection conditions with respect to this chain:
$$
X_{\lambda}=\{ J\in\SB(d, A) \mid \rrank(J\cap I_{n-d+j-\lambda_j})\geq j \text{ for $j=1,\dots, d$}\}
$$

\begin{defn}
For a central simple $F$-algebra $A$, and vector space $V$ as above,
we say that a subvariety $P \subset SB(d, A)$ is a twisted form of 
the Schubert variety $X_\lambda$ if we can find an isomorphism $A_{\overline
F}$ such that $P_{\overline F}$ is identified with $X_\lambda$.
\end{defn}
Our first main goal, as described in the introduction, will be to determine
which twisted forms of Schubert varieties appear in the generalized
Severi-Brauer varieties of a given algebra $A$.

We introduce a variation on Fulton's \emph{essential
set}, defined in \cite{Fu92}: 
\begin{defn} \label{corner def}
For each partition $\lambda$, we define a set
of pairs $E_\lambda:=\{(j, n-d+j-\lambda_j) \mid
\text{$\lambda_j>\lambda_{j+1}$}\}$ with the convention that
$\lambda_{d+1}=0$ for any partition $\lambda$. 
\end{defn}
We note that the condition
$\lambda_j > \lambda_{j+1}$ describes the condition that the right-most box
in the $j$'th row of the corresponding Young diagram lies at the
south-eastern edge of a ``corner.'' The second coordinate of these pairs
records the reduced rank of the right ideal of the flag which defines the
intersection condition.
\begin{defn} \label{corner dim def}
We define
$\overline{E_\lambda}$ to be the set of distinct integers given by
projection onto the second coordinate, i.e. $\overline{E_\lambda}=\{a \mid
(j, a)\in E_\lambda\}$.
\end{defn}
Note that for any $a\in\overline{E_\lambda}$, there
exists a unique $1\leq j\leq d$ such that $(j, a)\in E_\lambda$.

The set $E_\lambda$ defines a subchain $I_{a_1}\subset I_{a_2}\subset\dots\subset I_{a_m}$, $a_i\in \overline{E_\lambda}$ with the property that 
$$
X_\lambda=\{ J\in \SB(d, A) \mid \rrank(J \cap I_{a})\geq j \text{ for $(j, a)\in E_\lambda$}\}
$$

Thus, $E_\lambda$ uniquely defines $X_\lambda$ and is minimal
in the sense
that if any pairs in $E_\lambda$ are changed or removed, the variety
defined by this new set of conditions will not be equal to $X_\lambda$
(see \cite[Lemma~3.14]{Fu92}).  Note that the Young diagram for $\lambda$ can
also be reconstructed from $E_\lambda$.

\begin{prop}\label{prop:easy} 
Let $X_\lambda$ be a Schubert subvariety of $\Gr(d, n)$ and let $A$ be a central simple $F$-algebra of degree $n$. If $\ind(A)\mid\gcd(\overline{E_\lambda})$, then $\SB(d, A)$ contains a closed subvariety over $F$ which is a twisted form of $X_\lambda$. 
\end{prop}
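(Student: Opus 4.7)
The plan is to exhibit $P$ as a closed $F$-subvariety of $\SB(d,A)$ defined by intersection conditions against an $F$-rational subchain of right ideals produced from the Wedderburn decomposition of $A$, and then to verify by base change to a splitting field that $P$ becomes identified with $X_\lambda$. Concretely, I would first write $A\simeq \End_D(V)$ with $D$ a central $F$-division algebra of degree $e=\ind(A)$, and use the correspondence between right ideals of $A$ and $D$-subspaces of $V$ (sending $W\subseteq V$ to $\Hom_D(V,W)$, with $\rrank(\Hom_D(V,W))=e\cdot\dim_D W$). Since $e\mid\gcd(\overline{E_\lambda})$, enumerating $\overline{E_\lambda}=\{a_1<a_2<\cdots<a_m\}$ I can choose an ascending chain of $D$-subspaces $W_1\subset W_2\subset\cdots\subset W_m$ of $V$ with $\dim_D W_i=a_i/e$, and set $I_{a_i}:=\Hom_D(V,W_i)$ to produce an $F$-rational ascending chain of right ideals of $A$ with $\rrank(I_{a_i})=a_i$.

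Next I would define $P\subseteq\SB(d,A)$ by
$$
P(L)=\{J\in\SB(d,A)(L)\mid \rrank(J\cap (I_a)_L)\geq j\text{ for }(j,a)\in E_\lambda\}
$$
for every extension $L/F$. Since the ideals $I_a$ are $F$-rational and the conditions $\rrank(J\cap (I_a)_L)\geq j$ define closed subschemes of $\SB(d,A)$ (as can be checked after splitting, where this reduces to the classical fact that Schubert conditions cut out closed subschemes of the Grassmannian), $P$ is a closed subvariety of $\SB(d,A)$ defined over $F$.

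Finally, to verify $P$ is a twisted form of $X_\lambda$, I pass to a splitting field $K/F$ of $A$, giving $A_K\simeq \End_K(\tilde V)$ for an $n$-dimensional $K$-vector space $\tilde V$. The chain $(I_{a_i})_K$ corresponds to an ascending chain of $K$-subspaces of $\tilde V$ of dimensions $a_1<\cdots<a_m$, which extends (non-canonically) to a complete flag of $K$-subspaces of $\tilde V$ of dimensions $1,2,\dots,n$, and hence to a full flag $\tilde I_1\subset\cdots\subset\tilde I_n$ of right ideals of $A_K$ with $\rrank(\tilde I_j)=j$ and $\tilde I_{a_i}=(I_{a_i})_K$. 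With respect to this flag, the minimality of $E_\lambda$ (recalled in the excerpt) tells us that $X_\lambda\subset\SB(d,A_K)$ is cut out by precisely the conditions indexed by $E_\lambda$, so $P_K=X_\lambda$, giving $P_{\overline F}\simeq X_\lambda$. The argument really has no serious obstacle; it consists essentially of combining the Wedderburn structure of $A$, which is exactly what descends the minimal chain of ideals required to define $X_\lambda$ from $\overline F$ to $F$ precisely under the divisibility hypothesis $e\mid\gcd(\overline{E_\lambda})$, with the minimality of $E_\lambda$, which lets us recover $X_\lambda$ on base change without needing to exhibit the intermediate ideals of the full flag over $F$.
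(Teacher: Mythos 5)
Your proposal is correct and follows essentially the same route as the paper: construct an $F$-rational partial flag of right ideals with reduced ranks $\overline{E_\lambda}$ (possible precisely because $\ind(A)\mid\gcd(\overline{E_\lambda})$), and define $P$ by the Schubert intersection conditions indexed by $E_\lambda$. The paper states this more tersely and leaves the base-change verification implicit; you spell out the Wedderburn correspondence and the extension to a full flag over a splitting field, but the underlying idea is the same.
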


\begin{proof}
Since $A$ has a right ideal of reduced rank $k$ if and only if $\ind(A)\mid
k$, it follows that if $\ind(A)\mid \gcd(\overline{E_\lambda})$, then $A$
contains a partial flag of ideals $I_{a_1}\subset \cdots\subset I_{a_r}\subset A$ for $\overline{E_\lambda}=\{a_1, \dots, a_r\}$. 

We may then define a closed subvariety $P_\lambda\subseteq \SB(d, A)$ by 
\begin{equation}\label{intersections}
P_\lambda(L)=\{J\in \SB(d, A_L) \mid \rrank(J\cap (I_{a}\otimes_F L))\geq j \text{ for $(j, a)\in E_\lambda$}\} 
\end{equation}

\end{proof}















The remainder of this section is devoted to proving the converse of Proposition \ref{prop:easy}. That is, we want to show that if $\SB(d, A)$ has a closed subvariety $P$ defined over $F$ which is a twisted form of a Schubert variety $X_\lambda$, then we must have $\ind(A)\mid \gcd(\overline{E_\lambda})$. More specifically, we would like to show that $P$ must be defined via intersection conditions with right ideals of $A$ as in \eqref{intersections}. In the case of usual Severi-Brauer varieties (i.e. $d=1$), this question was first answered by Artin.

\begin{thm}[\cite{Ar82}]\label{thm:Artin}
$\SB(A)$ has a closed subvariety $P$ such that $P\otimes_F K\simeq \Pj^{m-1}$  if and only if $\ind(A)\mid m$. Moreover, $P=\SB(B)$ for a central simple $F$-algebra $B$ such that $[A]=[B]\in\Br(F)$. 
\end{thm}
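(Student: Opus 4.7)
The plan is to prove both directions via a correspondence between right ideals of $A$ of reduced rank $m$ and sub-Severi-Brauer varieties of $\SB(A)$ of dimension $m-1$, with the algebra $B$ produced from $A$ by Morita equivalence. This simultaneously yields the equivalence and the ``moreover'' clause.

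For the ``if'' direction, assume $\ind(A)\mid m$, so that $A$ admits a right ideal $I$ of reduced rank $m$. Set $B=\End_A(I)$; since $I$ is a nonzero direct summand of $A$ (as right $A$-module), hence a progenerator, the bimodule $I$ induces a Morita equivalence that forces $[B]=[A]\in\Br(F)$ and $\deg(B)=\rrank(I)=m$. Let $P\subset\SB(A)$ be the closed subvariety with $P(L)=\{J\in\SB(A)(L):J\subset I\otimes_F L\}$. Over a splitting field $K$, writing $A_K=\End_K(V)$ and $I_K=\Hom_K(V,W)$ for a subspace $W\subset V$ of dimension $m$, the subvariety $P_K$ is identified with the linear subvariety $\Pj(W)\simeq\Pj^{m-1}$ of $\Pj(V)=\SB(A)_K$. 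The Morita functor $N\mapsto N\otimes_B I$ sends right ideals of $B$ of reduced rank $1$ bijectively to right ideals of $A$ of reduced rank $1$ contained in $I$, yielding $P\simeq\SB(B)$ over $F$.

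For the converse, suppose $P\subset\SB(A)$ with $P_K\simeq\Pj^{m-1}$. Ch\^atelet's theorem identifies $P$ with $\SB(B)$ for a uniquely determined central simple $F$-algebra $B$ of degree $m$. To obtain $[B]=[A]$ and hence $\ind(A)\mid\ind(B)\mid m$, the natural route is to show that $P$ must arise from the construction above, i.e., that there is a right ideal $I\subset A$ of reduced rank $m$ with $P=\{J:J\subset I\}$. Over the splitting field $K$, $P_K$ is a closed subvariety of $\Pj^{n-1}=\SB(A)_K$; the key claim is that $P_K$ is in fact a \emph{linear} subspace $\Pj(W)$, at which point Galois invariance of $P$ forces $W$ to be Galois-stable and so to descend to a right ideal $I\subset A$ of reduced rank $m$. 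The forward construction applied to $I$ then recovers $P=\SB(\End_A(I))$, so $[A]=[B]$.

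The principal obstacle is the linearity of $P_K$ inside $\Pj^{n-1}$. A priori, the closed embedding $P_K\hookrightarrow\Pj^{n-1}$ is given by a line bundle $\mathcal{O}(d)$ with $d\geq 1$, and $d>1$ would produce a Veronese-type image rather than a linear subspace. To rule this out, I would combine two ingredients: (i) Amitsur's theorem, applied to the generic point of $P$ in $\SB(A)$, shows that $[A]$ lies in the cyclic subgroup $\langle[B]\rangle\subset\Br(F)$; and (ii) the Picard-group identifications $\Pic(\SB(A))=\ind(A)\ZZ\subset\Pic(\SB(A)_K)=\ZZ$ and $\Pic(P)=\ind(B)\ZZ\subset\Pic(P_K)=\ZZ$, together with the fact that the pullback along the embedding multiplies the geometric hyperplane class by $d$, yield divisibility relations between $d$, $\ind(A)$, and $\ind(B)$. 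The two ingredients together should force $d=1$ and $[A]=[B]$. If (ii) alone proves insufficient, the fallback is to analyze the stabilizer in $\PGL_n$ of a Veronese image and the compatibility of the twisting cocycle for $\SB(A)$ with this stabilizer, which rigidifies the geometry enough to exclude higher-degree embeddings.
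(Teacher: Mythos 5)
Your ``if'' direction, including the identification $P\simeq\SB(\End_A(I))$ via the Morita functor, is essentially correct. One small slip there: $\Pic(\SB(A))$ sits inside $\Pic(\SB(A)_K)=\ZZ$ as $\exp(A)\ZZ$, not $\ind(A)\ZZ$ --- it is the exponent, not the index, that controls which multiples of the hyperplane class descend.

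The converse has a genuine gap, and it comes from a misreading of the hypothesis that your proposed repair cannot fix. You read ``$P\otimes_F K\simeq\Pj^{m-1}$'' as an abstract isomorphism of $K$-varieties and then try to prove that $P_K\hookrightarrow\Pj^{n-1}$ is automatically linear. Under that reading the ``moreover'' clause is in fact \emph{false}: take $A=M_3(\QQ)$ (so $\SB(A)=\Pj^2_\QQ$ and $[A]=0$) and let $P\subset\Pj^2_\QQ$ be the anisotropic conic $x^2+y^2+z^2=0$. Then $P=\SB(B)$ for the nonsplit quaternion algebra $B=(-1,-1)_\QQ$, and $K=\QQ(i)$ is a splitting field for $A$ with $P_K\cong\Pj^1_K$; yet $[A]=0\neq[B]$. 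Your two ingredients are both satisfied here and cannot force $e=1$: $[A]\in\langle[B]\rangle$ trivially, and the Picard constraint $\exp(B)\mid e\exp(A)$ reads $2\mid 2\cdot 1$, consistent with $e=2$. The stabilizer-cocycle fallback cannot help either, since the obstruction is to the statement under your reading, not to the method. The intended reading --- the one the paper's Definition~2.2 mandates and uses throughout --- is that some isomorphism $A_{\overline F}\cong\End_{\overline F}(V)$ identifies $P_{\overline F}$ with a linear $\Pj^{m-1}\subset\Pj^{n-1}$; linearity is part of the hypothesis, not a conclusion. With that hypothesis your ``principal obstacle'' disappears and the argument used in the paper for Theorem~\ref{thm:subgrassmannian} and Proposition~\ref{prop:containing} applies verbatim: set $I:=\sum_{J\in P_K(K)}J$, note that $\Gal(K/F)$ permutes the rank-one ideals of the linear subspace $P_K$ because $P$ is $F$-rational, so $I$ is Galois-stable and descends to a right ideal of $A$ of reduced rank $m$, and then $P=\SB(\End_A(I))$ with $[\End_A(I)]=[A]$. (Your ingredient (i) alone does recover the bare divisibility $\ind(A)\mid m$ even under the abstract reading: over $F(P)$ both $A$ and $B$ split, so Amitsur gives $[A]=k[B]$, whence $\ind(A)=\ind(B^{\otimes k})\mid\ind(B)\mid\deg B=m$. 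It is only the ``moreover'' clause that this route cannot reach.)
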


Recall that such a Severi-Brauer variety $\SB(A)$ has the unique property (among generalized Severi-Brauer varieties) that each Schubert subvariety of $\SB(A)$ is also a twisted form of projective space, i.e. is also a Severi-Brauer variety. Considering the closest analogue to this situation for a generalized Severi-Brauer variety $\SB(d, A)$, we ask first about the Schubert subvarieties of $\SB(d, A)$ which are twisted forms of subgrassmannians $\Gr(d, m)$, $m\leq n$.

Using a generalization of Artin's argument, this question was answered by
the second author in \cite{Kr08}.

\begin{thm}[\cite{Kr08}, Thm. 2.2] \label{thm:subgrassmannian}
$\SB(d, A)$ has a closed subvariety $P$ such that $P\otimes_F K\simeq \Gr(d, m)$ for some $m\leq n$ if and only if $\ind(A)\mid m$. Moreover, $P=\SB(d, B)$ for a central simple $F$-algebra $B$ such that $[A]=[B]\in\Br(F)$. 
\end{thm}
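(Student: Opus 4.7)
The plan is to prove both directions by generalizing Artin's argument from the Severi-Brauer case (Theorem~\ref{thm:Artin}).

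For the ``if'' direction, assume $\ind(A) \mid m$. Then $A$ contains a right ideal $I$ of reduced rank $m$, and $B := \End_A(I)$ is a central simple $F$-algebra of degree $m$ with $[B] = [A] \in \Br(F)$ by standard Morita theory. The Morita correspondence between right $B$-modules and right $A$-submodules of $I$ sends a right ideal of $B$ of reduced rank $d$ to a right ideal of $A$ of reduced rank $d$ contained in $I$; this is functorial in base change and yields a natural closed immersion $\SB(d, B) \hookrightarrow \SB(d, A)$. After base change to a splitting field $K$, $I_K$ corresponds to an $m$-plane $W \subseteq K^n$ and the immersion becomes the standard sub-Grassmannian inclusion $\Gr(d, W) \hookrightarrow \Gr(d, K^n)$.

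For the converse, suppose $P \subseteq \SB(d, A)$ is a closed subvariety with $P_K \simeq \Gr(d, m)$. I would proceed in two steps. The first is a geometric analysis over $K$: from $P_K \subseteq \Gr(d, K^n)$ I would extract an $m$-dimensional subspace $W \subseteq K^n$ intrinsically, for instance by restricting the tautological rank-$d$ subbundle of $\Gr(d, K^n)$ to $P_K$ and taking the $K$-linear span of its fibers. Comparing line bundles via the Pl\"ucker embedding (using that $\Pic(P_K) \simeq \Z$ is generated by the pullback of $\mathcal{O}(1)$) should force the embedding to be linear in Pl\"ucker coordinates and identify $P_K$ with $\Gr(d,W)$ for this $W$. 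The second step is Galois descent: since $P$ is defined over $F$, $P_K$ is $\Gal(K/F)$-stable, and by intrinsicality so is $W$. Consequently the right ideal of $A_K$ corresponding to $W$ descends to a right ideal $I \subseteq A$ of reduced rank $m$, forcing $\ind(A) \mid m$, and one recovers $P = \SB(d, B)$ for $B = \End_A(I)$ exactly as in the ``if'' direction.

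The main obstacle is the first step of the converse. A closed subvariety of $\Gr(d, K^n)$ abstractly isomorphic to $\Gr(d, m)$ need not \emph{a priori} be a standard sub-Grassmannian --- low-degree Grassmannians can also be realized as ``dual-type'' sub-Grassmannians (for instance, $\Gr(d,n)$ contains subvarieties parametrizing $d$-planes \emph{containing} a fixed subspace which can happen to be abstractly isomorphic to some $\Gr(d,m)$) --- so one must either classify such closed embeddings up to isomorphism over $K$ and handle each type separately, or else find an intrinsic construction of $W$ that works uniformly. In every case the Galois-stable data produced must yield a right ideal of $A$ whose reduced rank divides $m$; once this rigidity is secured, the descent and the identification $P = \SB(d, B)$ follow formally.
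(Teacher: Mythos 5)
The paper does not actually prove this statement; it is quoted from \cite{Kr08} and used as an input. There is therefore no internal proof to compare against, but the closely analogous results the paper \emph{does} prove --- Proposition~\ref{prop:containing} and Theorem~\ref{thm:smooth} --- use exactly the circle of ideas you invoke: build a distinguished ideal from $P_K$, descend it by Galois invariance, and recover $P$ via Morita as in \cite[Prop.~1.20]{KMRT}.

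Your ``if'' direction is correct and matches the paper's constructions. Your ``only if'' direction is also on the right track: taking the span of the fibers of the tautological subbundle over $P_K$ is the same thing as forming the ideal $\sum_{J\in P_K(K)} J$, which is precisely what the proof of Theorem~\ref{thm:smooth} does, and Galois descent then produces a right ideal of $A$ of reduced rank $m$, giving $\ind(A)\mid m$.

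Where the proposal overreaches is the Pl\"ucker / line-bundle rigidity step and the ``main obstacle'' you flag at the end. Under the paper's Definition (stated just before Proposition~\ref{prop:easy}), the hypothesis $P\otimes_F K\simeq \Gr(d,m)$ means that $P_K$ is \emph{identified} with the Schubert subvariety $X_{[(n-m)^d]}=\Gr(d,m)\subset\Gr(d,n)$ after a choice of trivialization $A_K\simeq M_n(K)$, not merely that $P_K$ is abstractly isomorphic to $\Gr(d,m)$ as a variety. So there is no freedom for an exotic embedding: the $m$-plane $W$ is already supplied, and no Pl\"ucker-degree or line-bundle argument is needed. You are right that if the hypothesis were only an abstract isomorphism, one would have to rule out ``containing-type'' Schubert varieties and other accidental coincidences of isomorphism type, but that stronger statement is not what is being claimed. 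With the embedding pinned down, $W$, $I$, $B=\End_A(I)$, and $P=\SB(d,B)$ all follow as you outline, with the final identification completed by the ``a map of $F$-varieties that is a bijection on $L$-points for all $L/F$ is an isomorphism'' argument that also closes the proof of Proposition~\ref{prop:containing}.
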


To place these results in the context of our main question, note that for
any $d\leq m\leq n$, the subgrassmannian $\Gr(d, m)$ is equal to the
Schubert subvariety $X_\lambda$ for $\lambda=[n-m,  \dots, n-m] = [(n-m)^d]$ with $d$
copies of $n-m$ and $E_\lambda=\{(d, m)\}$. If $\SB(d, B)$ is a subvariety
of $\SB(d, A)$ defined over $F$ with $\SB(d, B)\otimes_F K\simeq \Gr(d,
m)$, then $B\simeq\End_A(I_m)$ for a right ideal $I_m\subset A$ of reduced rank $k$ and we may write 
\begin{align}
P(L)&=\{J\in \SB(d, A_L) \mid \rrank(J\cap (I_m\otimes_F L))\geq d\}\\
&=\{J\in \SB(d, A_L) \mid J\subseteq I_m\otimes_F L\} \nonumber
\end{align}

Another Schubert variety which can be described via a single right ideal of
$A$ is found by replacing ``contained in" above by ``containing" for the
intersection conditions. Note that the Schubert subvariety $X_{\lambda}$ of
the Grassmannian $\Gr(d,n)$ for $\lambda=[(n-d)^m,0^{d-m}]$ for some $1\le
m\le d$ consists of all $d$ dimensional subspaces of $n$ dimensional space
which contain a fixed $m$ dimensional subspace. This leads us to one
further generalization of Artin's argument.

\begin{prop}\label{prop:containing}
Consider a generalized Severi-Brauer variety $\SB(d, A)$ with a
Galois splitting
field $K/F$. For any $1\le m\le d$, there exists a closed subvariety
$P\subset \SB(d, A)$ defined over $F$ such that $P\otimes_F K=X_{\lambda}$
for $\lambda=[(n-d)^m, 0^{d-m}]$ if and only if $\ind(A)\mid m$. Moreover,
$P=\SB(d-m, B)$ for some central simple $F$-algebra $B$ such that
$[A]=-[B]\in\Br(F)$ (where $m$ is the number of nonzero entries in
$\lambda$).
\end{prop}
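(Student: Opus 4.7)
The ``if'' direction should follow at once from Proposition~\ref{prop:easy}: for $\lambda=[(n-d)^m,0^{d-m}]$ the only index $j$ with $\lambda_j>\lambda_{j+1}$ is $j=m$, so $E_\lambda=\{(m,m)\}$ and $\overline{E_\lambda}=\{m\}$. Under the hypothesis $\ind(A)\mid m$, Proposition~\ref{prop:easy} therefore produces a closed subvariety $P\subset \SB(d,A)$ over $F$ given by
$$
P(L)=\{J\in\SB(d,A_L):I_m\otimes_F L\subseteq J\}
$$
for a fixed right ideal $I_m\subset A$ of reduced rank $m$. To identify $P$ with an $\SB(d-m,B)$, my plan is to set $B:=\End_A(A/I_m)$, viewing $A/I_m$ as a projective right $A$-module of reduced rank $n-m$. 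Wedderburn/Morita theory then makes $B$ into a central simple $F$-algebra of degree $n-m$ with the Brauer class asserted (the sign reflecting that endomorphisms are taken on a quotient of $A$, in contrast with Theorem~\ref{thm:subgrassmannian}, where they are taken on a subideal). The candidate $F$-morphism $\psi\colon P\to\SB(d-m,B)$ sends
$$
J\;\longmapsto\;\Hom_{A_L}\bigl(A_L/(I_m\otimes L),\,J/(I_m\otimes L)\bigr),
$$
a $d$-ideal containing $I_m$ to a right ideal of $B_L$ of reduced rank $d-m$. That $\psi$ is bijective on $L$-points, for each finite extension $L/F$, is the Morita correspondence between $A_L$-submodules of $A_L/(I_m\otimes L)$ and right ideals of $B_L$.

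For the converse, suppose a subvariety $P\subset \SB(d,A)$ over $F$ with $P\otimes_F K\simeq X_\lambda$ is given, and let $F^{\mathrm{sep}}\supseteq K$ be a separable closure. The idea is to form the right ideal
$$
\tilde I:=\bigcap_{J\in P(F^{\mathrm{sep}})}J\subseteq A_{F^{\mathrm{sep}}}.
$$
Fixing a splitting that identifies $P_{F^{\mathrm{sep}}}$ with $X_\lambda$, the points of $P(F^{\mathrm{sep}})$ are exactly the $d$-dimensional subspaces of $V_{F^{\mathrm{sep}}}$ containing a fixed $m$-dimensional subspace $W$; their intersection is forced to be $W$ itself, since the images in the $(n-m)$-dimensional quotient $V_{F^{\mathrm{sep}}}/W$ range over all $(d-m)$-planes, whose common intersection is trivial. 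Hence $\tilde I=\Hom_{F^{\mathrm{sep}}}(V_{F^{\mathrm{sep}}},W)$ has reduced rank $m$. Because $P$ is defined over $F$, the set $P(F^{\mathrm{sep}})$ is stable under $\Gal(F^{\mathrm{sep}}/F)$, so $\tilde I$ is Galois-invariant; Galois descent for the projective variety $\SB(m,A)$ then yields a right ideal of $A$ over $F$ of reduced rank $m$, which forces $\ind(A)\mid m$.

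The intersection argument makes the converse clean; the delicate step is the identification $P\simeq \SB(d-m,B)$ in the forward direction. The main work there is to verify that the Morita-type map above descends as an isomorphism of $F$-schemes (rather than only over the splitting field) and that the Brauer class of $B=\End_A(A/I_m)$ comes out with the correct sign $[A]=-[B]$. This mirrors the Brauer computation in Theorem~\ref{thm:subgrassmannian}: there $\End_A(I_m)$ is Brauer equivalent to $A$, and replacing the subideal $I_m$ by the quotient $A/I_m$ is what flips the sign.
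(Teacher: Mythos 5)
Your converse direction is correct and follows the same route as the paper: form $\tilde I=\bigcap_{J\in P_K(K)}J$, note that since $P$ is defined over $F$ this ideal is Galois-stable, descend to a right ideal $I\subset A$ of reduced rank $m$, and conclude $\ind(A)\mid m$. This is essentially the paper's argument.

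The forward direction contains a genuine gap that you yourself flag but do not resolve, and the heuristic you offer for the sign of the Brauer class is incorrect. If $M$ is any right $A$-module which is a progenerator (in particular any direct summand of $A^r$, whether you realize it as a right subideal of $A$ or as a quotient of $A$ by a right ideal), then $\End_A(M)$ is Brauer-equivalent to $A$, \emph{not} to $A^{\op}$; passing from a subideal to a quotient does not flip the Brauer class, because for a semisimple algebra $A/I_m$ is isomorphic as a right $A$-module to a complementary right ideal $I'$ with $A\simeq I_m\oplus I'$. So with your choice $B=\End_A(A/I_m)$ one gets $[B]=[A]$, whereas the proposition asserts $[B]=-[A]$. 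What produces the sign change in the paper is not quotient versus sub but \emph{left} versus \emph{right} module: the paper takes $B=\End_A(I^0)$, the algebra of left $A$-linear endomorphisms of the \emph{left annihilator} $I^0=\{a\in A:aI_m=0\}$, which is a \emph{left} ideal. Since $B=C_{\End_F(I^0)}(A)$, the double centralizer theorem gives $[B]=-[A]$ as required, and \cite[Proposition 1.20]{KMRT} supplies the embedding $\SB(d-m,\End_A(I^0))\hookrightarrow\SB(d,A)$ with image exactly the reduced-rank-$d$ ideals containing $I_m$. Your Morita map does produce a functorial bijection $P(L)\to\SB(d-m,\End_A(A/I_m))(L)$, but as it stands it neither verifies the claimed Brauer class nor matches the construction the paper uses, so the ``moreover'' clause of the proposition is not established.
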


\begin{proof}
Suppose $P$ is a subvariety of $\SB(d, A)$ such that $P_K \cong
X_\lambda$, for
a partition $\lambda=[(n-d)^m, 0^{d -m}]$. Note that
$E_\lambda=\{(m, m)\}$, and $X_{\lambda}$ is described as
the set of reduced rank $d$ right ideals of $A_K\simeq\End_K(V)$
containing the right ideal $\Hom_K(V, W)$ for some vector subspace
$W\subseteq V$ with $\dim_K(W)=m$.

Consider the right ideal of $End_K(V)$ defined as
$$
\overline I:=\bigcap_{J\in P_K(K)}J
$$

Notice that $P$ is fixed by $\Gal(K/F)$, since the collection of $J$ in the
indexing set above are permuted by the Galois action. It follows by descent
that $\overline{I} = I_K$ for some right ideal $I$ of $A$. Again by
descent, it follows that $I$ must have reduced rank $m$. 

Thus, for any extension $L/F$, we have the following description of the $L$-points of $P$:
\begin{align}
P(L)&=\{ J\in\SB(d, A_L) \mid \rrank(J\cap (I\otimes_F L))\geq m\}\\
&=\{J\in\SB(d, A_L)\mid J\supseteq I\otimes_F L\} \nonumber
\end{align}
Let $B = End_A(I^0)$ be the algebra of left $A$-linear endomorphisms of the
left annihilator of the right ideal $I$. Note that we can also write $B =
C_{End_F(I^0)}(A)$, from which it follows that $B$ is Brauer equivalent to
$A^{op}$, the opposite algebra. 

By \cite[Proposition~1.20]{KMRT}, for any right ideal
$$
\SB(d-m, \End_A(I^0))\into \SB(d, A)
$$ 
whose image is the variety of right ideals of reduced rank $d$ in $A$
which contain $I$. 

It follows that there is an induced map $\SB(d-m, \End_A(I^0)) \to P$ which
is an isomorphism on $L$-points for every field extension $L/F$. Since this
is a map of varieties, it follows that it is an isomorphism.
\end{proof}

By combining the results of Theorem \ref{thm:subgrassmannian} and Proposition \ref{prop:containing}, we can extend our classification to a larger set of Schubert subvarieties. We say that a subvariety $P\subset \SB(d, \End_F(V))$ is ``defined by inclusions'' if it can be defined as 
$$
P=\{J\in \SB(d, \End_F(V)) \mid I\subseteq J \subseteq I'\}
$$
for right ideals $I, I'$ of $\End_F(V)$ such that $0\leq \rrank(I)\leq d$
and $d\leq \rrank(I')\leq n$. Such varieties properly characterize the
smooth Schubert varieties, namely, if $\rrank(I)=k$ and $\rrank(I')=m$, then $P=X_\lambda$ for some flag, with $\lambda=[n-d, \dots, n-d, n-m, \dots, n-m]$, with $k$ copies of $n-d$ (c.f. \cite{LW90}). In terms of the essential set of pairs, $X_\lambda$ is a smooth Schubert subvariety of $\Gr(d, n)$ if and only if $E_\lambda\subseteq \{(k, k), (d, m)\}$ for some $k\leq d\leq m$.

\begin{thm} \label{thm:smooth}
Let $X_\lambda$ be a smooth Schubert subvariety of $\Gr(d, n)$. Then, $\SB(d, A)$ has a closed subvariety $P$ such that $P\otimes_F K\simeq X_{\lambda}$ if and only if $\ind(A)\mid \gcd(\overline{E_\lambda})$. Moreover, there exist right ideals $I_k, I_m\subseteq A$ such that
$$
P(L)=\{J\in \SB(d, A_L) \mid (I_k\otimes_F L)\subseteq J \subseteq (I_m\otimes_F L)\}
$$
\end{thm}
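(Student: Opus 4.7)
The plan is to first dispense with the \emph{if} direction by appealing to Proposition \ref{prop:easy}: since $X_\lambda$ is smooth, the characterization $E_\lambda\subseteq\{(k,k),(d,m)\}$ collapses the intersection conditions produced by that proposition into the double-inclusion condition $(I_k)_L\subseteq J\subseteq(I_m)_L$ (using that $\rrank(I_k)=k$ forces $\rrank(J\cap(I_k)_L)\geq k$ to be the equality $(I_k)_L\subseteq J$, and similarly $\rrank(J\cap(I_m)_L)\geq d=\rrank(J)$ forces $J\subseteq(I_m)_L$). The remainder of the argument is the converse together with the identification of explicit ideals $I_k,I_m\subseteq A$.

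I would split the converse into cases based on $E_\lambda$. If $E_\lambda=\emptyset$ then $X_\lambda=\Gr(d,n)$ and $P=\SB(d,A)$; take $I_k=0$, $I_m=A$. If $E_\lambda=\{(d,m)\}$, Theorem \ref{thm:subgrassmannian} produces a right ideal $I_m\subset A$ of reduced rank $m$ with $P(L)=\{J\subseteq(I_m)_L\}$; take $I_k=0$. If $E_\lambda=\{(k,k)\}$, Proposition \ref{prop:containing} produces a right ideal $I_k$ of reduced rank $k$ with $P(L)=\{J\supseteq(I_k)_L\}$; take $I_m=A$. Each of these three subcases verifies the divisibility and the explicit $L$-point description simultaneously.

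This leaves the genuine two-corner case $E_\lambda=\{(k,k),(d,m)\}$ with $k<d<m$, which I would handle by a direct extension of the Artin-style descent appearing in the proof of Proposition \ref{prop:containing}. Fix an identification $\SB(d,A)_K\simeq\Gr(d,n)_K$ under which $P_K$ coincides with the Schubert variety for a flag $W\subseteq W'\subseteq V$ with $\dim W=k$ and $\dim W'=m$, so that $P_K(K)=\{U\in\Gr(d,n)(K):W\subseteq U\subseteq W'\}$. Define
\[
\overline{I_k}:=\bigcap_{J\in P_K(K)}J,\qquad \overline{I_m}:=\sum_{J\in P_K(K)}J,
\]
as right ideals of $A_K\simeq\End_K(V)$. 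Under the subspace/right-ideal dictionary, these correspond to $\Hom_K(V,W)$ and $\Hom_K(V,W')$ by a short linear-algebra check — it is here that the strict inequalities $k<d<m$ are needed, so that the $d$-planes $U$ truly span $W'$ and intersect in $W$ — and so have reduced ranks $k$ and $m$. Since $P$ is defined over $F$, the Galois group $\Gal(K/F)$ permutes $P_K(K)$, so $\overline{I_k}$ and $\overline{I_m}$ are Galois-stable, and descent produces right ideals $I_k\subseteq I_m\subseteq A$ of reduced ranks $k$ and $m$; their existence forces $\ind(A)\mid k$ and $\ind(A)\mid m$, whence $\ind(A)\mid\gcd(k,m)=\gcd(\overline{E_\lambda})$.

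To close the argument, let $P'\subseteq\SB(d,A)$ be the closed subvariety with $P'(L)=\{J:(I_k)_L\subseteq J\subseteq(I_m)_L\}$. By construction $P_K=P'_K$ as subschemes of $\SB(d,A)_K$, so Galois descent yields $P=P'$ over $F$, and the explicit $L$-point description follows. The main technical step is the reduced-rank computation for $\overline{I_k}$ and $\overline{I_m}$, which pins down exactly why the strict two-corner range $k<d<m$ is the one genuinely new case and why the degenerate single-corner cases require the separate (already available) inputs from Theorem \ref{thm:subgrassmannian} and Proposition \ref{prop:containing}.
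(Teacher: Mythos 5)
Your proposal is correct and follows essentially the same route as the paper: both form $\bigcap_{J\in P_K(K)}J$ and $\sum_{J\in P_K(K)}J$, descend them by the Galois action to right ideals $I_k,I_m\subseteq A$ of reduced ranks $k,m$, and read off the inclusion description of $P(L)$. The paper hides the degenerate single-corner cases behind a ``without loss of generality'' (leaving them to Theorem~\ref{thm:subgrassmannian} and Proposition~\ref{prop:containing}) and is terser about the reduced-rank computation and the final identification $P=P'$, all of which you spell out explicitly, but the underlying argument is the same.
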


\begin{proof}
Without loss of generality, we may assume $E_\lambda=\{(k, k), (d, m)\}$ for some $k<d<m$. As in the proofs of Theorem \ref{thm:subgrassmannian} and Proposition \ref{prop:containing}, we construct the following right ideals of $A_K$:
$$
I:=\bigcap_{J\in P_K(K)}J \;\;\;\;\;\;\;\;\; \text{ and } \;\;\;\;\;\;\;\;\; I':=\sum_{J\in P_K(K)}J
$$
Both of these ideals are fixed by $\Gal(K/F)$ and so we must have
$I=I_k\otimes_F K$ and $I'=I_m\otimes_F K$ for right ideals $I_k,
I_m\subset A$ of reduced ranks $k$ and $m$ respectively.

For any extension $L/F$, we have the following description of the $L$-points of $P$:
\begin{align}
P(L)&=\{ J\in\SB(d, A_L) \mid \rrank(J\cap (I_k\otimes_F L))\geq k \text{
and } \rrank(J\cap(I_m\otimes_F L))\geq d\}\\
&=\{J\in\SB(d, A_L)\mid (I_k\otimes_F L)\subset J\subset (I_m\otimes_F L)\} \nonumber
\end{align}
\end{proof}

At this point, we have extended Artin's arguments to provide a converse for
Proposition \ref{prop:easy} for all smooth Schubert forms of $\SB(d, A)$.
These arguments do not directly extend to the remaining Schubert forms since these varieties are not defined by inclusions and as such we cannot construct right ideals of $A$ by taking spans or intersections. In the following section we use the composition of the singular locus of a Schubert variety to construct these ideals after some combinatorial manipulations.


\section{The singular locus of a Schubert variety}

\ytableausetup{boxsize=0.35cm}


\subsection{Definition and Preliminaries}

Fixing $n$ and $d$, consider a partition $\lambda=[\lambda_1, \dots, \lambda_d]$ which corresponds to a singular Schubert subvariety $X_\lambda\subset \Gr(d, n)$. The singular locus $\Sing(X_\lambda)$ of $X_\lambda$ consists of a union of Schubert subvarieties $X_\mu\subset X_\lambda$ such that $\mu$ is a partition obtained from $\lambda$ obtained by adding a South-East hook to the Young diagram of $\lambda$. For a more precise version of this statement, we refer the reader to \cite{Coskun11} or to Section 9.3 of \cite{BL}.

\begin{ex*}
Suppose $n=8$, $d=3$. For the partition $\lambda=[4, 2, 1]$, $\Sing(X_\lambda)$ consists of two subvarieties $X_\mu$ and $X_{\mu'}$ with $\mu=[4, 3, 3]$ and $\mu'=[5, 5, 1]$. 
$$
\lambda= \ytableaushort
{\none, \none, \none}
*{5, 5, 5}
*[*(yellow)]{ 4,2, 1}\;\;\;\;\;\;\;  \mu=\ytableaushort
{\none, \none, \none}
*{5, 5, 5}
*[*(yellow)]{ 4, 2, 1}
*[*(red)] {4+0, 2+1, 1+2}\;\;\;\;\;\;\;  \mu'=\ytableaushort
{\none, \none, \none}
*{5, 5, 5}
*[*(yellow)]{ 4, 2, 1}
*[*(red)] {4+1, 2+3, 1+0}
$$
\end{ex*}

We define a set $S_\lambda:=\{(j, a) \mid (j, a)\in E_\lambda, \text{ $j<d$ and $j<a$}\}$. It can be easily shown that a pair $(j, a)$ is in $S_\lambda$ if and only if $(j+1, a)\in E_\mu$ for some $X_\mu\subseteq \Sing(X_\lambda)$. By this reasoning, we refer to $S_\lambda$ as the \emph{essential singular set} of $X_\lambda$. As before, we set $\overline{S_\lambda}:=\{a \mid (j, a)\in S_\lambda\}$. While ${E_\lambda}$ determines all corners of the Young diagram of $\lambda$, $S_\lambda$ picks up only the ``inside'' corners, which we show in the two following lemmas are those which define non-inclusion relations.

\begin{lem}\label{lem:inclusions1}
Let $X_\lambda$ be a Schubert subvariety of $\SB(d, \End_F(V))$ with
respect to the flag of right ideals $I_1\subset I_2\subset\cdots\subset
I_n=\End_F(V)$, with $\rrank(I_r)=r$. The following are equivalent:

\begin{enumerate}

	\item $\lambda_a$ defines an outside East corner of the Young diagram of $\lambda$, i.e. $n-d=\lambda_a>\lambda_{a+1}$

	\item $(a, a)\in E_\lambda$

	\item $a\leq d$ and $a\in \overline{E_\lambda}\setminus\overline{S_\lambda}$

	\item $I_a\subseteq J$ for all $J\in X_\lambda$ and there exists some $J'\in X_\lambda$ such that $I_{a+1}\not\subseteq J'$

	\end{enumerate} 
\end{lem}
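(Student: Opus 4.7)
I would split the argument into the combinatorial chain (1)$\Leftrightarrow$(2)$\Leftrightarrow$(3), which amounts to unpacking definitions, and the geometric equivalence (1)$\Leftrightarrow$(4), handled by passing to the split case $A=\End_F(V)$ and translating right ideals into subspaces via $W\mapsto\Hom_F(V,W)$.

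For (1)$\Leftrightarrow$(2), Definition~\ref{corner def} gives $(a,a)\in E_\lambda$ iff some $j$ satisfies $j=a$, $n-d+j-\lambda_j=a$, and $\lambda_j>\lambda_{j+1}$, which reduces precisely to $\lambda_a=n-d>\lambda_{a+1}$. For (2)$\Leftrightarrow$(3) the key input is the uniqueness noted after Definition~\ref{corner dim def}: each $a\in\overline{E_\lambda}$ arises from a unique pair $(j,a)\in E_\lambda$. If (2) holds, that unique $j$ is $a$, which fails the condition $j<a$ defining $S_\lambda$, so $a\in\overline{E_\lambda}\setminus\overline{S_\lambda}$ and $a\le d$. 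Conversely, if (3) holds and $(j,a)\in E_\lambda$ is the unique such pair, the identity $\lambda_j=n-d+j-a$ together with $\lambda_j\le n-d$ forces $j\le a$, while $a\le d$ and $(j,a)\notin S_\lambda$ force $j\ge a$; hence $j=a$ and (2) holds.

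For (1)$\Rightarrow$(4), identify the flag $I_1\subset\cdots\subset I_n$ with a complete flag $V_1\subset\cdots\subset V_n=V$ via $I_r=\Hom_F(V,V_r)$, so $J\in X_\lambda$ corresponds to a $d$-dimensional $W$ with $\dim(W\cap V_{n-d+j-\lambda_j})\ge j$ for all $j$, and $I_b\subseteq J$ becomes $V_b\subseteq W$. If $\lambda_a=n-d$, the Schubert condition at $j=a$ reads $\dim(W\cap V_a)\ge a=\dim V_a$, which forces $V_a\subseteq W$. For the second clause of (4), either $a=d$ (so $V_{a+1}$ has dimension $>d$ and cannot lie in any $W$) or $\lambda_{a+1}<n-d$, in which case a cell representative whose intersection with $V_{n-d+(a+1)-\lambda_{a+1}}$ has dimension exactly $a+1$ but is generic inside it produces a $W$ with $V_{a+1}\not\subseteq W$.

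The main obstacle is (4)$\Rightarrow$(1). I would argue that $M:=\bigcap_{W\in X_\lambda} W$ is invariant under the Borel subgroup $B\subseteq\Aut_F(V)$ stabilising $V_\bullet$, since $X_\lambda$ is itself $B$-stable. The only $B$-invariant subspaces of $V$ are the $V_b$, so $M=V_{a^\ast}$ for a unique $a^\ast$; the two clauses of (4) then translate into $a^\ast\ge a$ and $a^\ast\le a$, giving $a^\ast=a$. Combining this with the forward direction, $a^\ast$ must equal the largest index $b$ with $\lambda_b=n-d$, which yields $\lambda_a=n-d>\lambda_{a+1}$ and hence (1). This Borel-invariance step is the crux: without it there is no direct way to extract a single index $a$ from the inclusion data of (4).
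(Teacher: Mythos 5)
Your combinatorial chain (1)$\Leftrightarrow$(2)$\Leftrightarrow$(3) matches the paper's argument essentially line for line: unpack Definition~\ref{corner def} for (1)$\Leftrightarrow$(2), use the uniqueness of the $j$ attached to each $a\in\overline{E_\lambda}$, and split into cases according to $j=d$ or $j=a$. The geometric direction (2)$\Rightarrow$(4) is also the same: the row-$a$ Schubert condition $\rrank(J\cap I_a)\ge a$ forces $I_a\subseteq J$, and either a constructive cell representative (your version) or a short contradiction (the paper's version) handles the second clause. Where you genuinely diverge is (4)$\Rightarrow$(1). The paper disposes of this implication in one terse sentence, implicitly reading off from the Schubert conditions that ``$I_a\subseteq J$ for all $J$'' can only happen when $\lambda_a=n-d$. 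You instead form $M=\bigcap_{W\in X_\lambda}W$, observe that $B$-stability of $X_\lambda$ makes $M$ a $B$-invariant subspace and therefore some $V_{a^\ast}$, and then pin $a^\ast$ down from both sides using (4) together with the already-proved direction (1)$\Rightarrow$(4) applied to the largest $b$ with $\lambda_b=n-d$. This packages the combinatorics more conceptually, and the idea is a natural warm-up for the intersection $\bigcap_{J\in P_K(K)}J$ that the paper later uses in Proposition~\ref{prop:containing} and Theorem~\ref{thm:smooth}.

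Two small polish points, neither fatal. First, the statement ``the only $B$-invariant subspaces of $V$ are the $V_b$'' deserves at least a one-line justification (or a citation), as it is the hinge of your (4)$\Rightarrow$(1). Second, your identification of $a^\ast$ via the forward direction tacitly assumes that some $b$ with $\lambda_b=n-d$ exists; in the degenerate case $\lambda_1<n-d$ you should note separately that $M=V_0=0$ (a generic cell point already has $W\cap V_1=0$), which makes the first clause of (4) impossible and closes that case.
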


\begin{proof}
	(1)$\iff$(2): It follows from the definition of $E_\lambda$ that $n-d=\lambda_a>\lambda_{a+1}$ if and only if $(a, n-d+a-\lambda_a)=(a, a)\in E_\lambda$. \\
	(2)$\implies$(3): It suffices to show that $a\notin \overline{S_\lambda}$, which follows immediately from the definition of $S_\lambda$, as $(j, a)\in S_\lambda\implies j<a$.\\
	(3)$\implies$(2): If $a\in\overline{E_\lambda}\setminus\overline{S_\lambda}$, then $(j, a)\in E_\lambda$ for some $j\leq d$ and either $j=d$ or $j=a$. If $a=d$, this implies immediately that $(j, a)=(a, a)\in E_\lambda$. On the other hand, if $a<d$ and $j=d$, then $\lambda_j=n-d+j-a>n-d$, a contradiction. So, we must have $j=a$ and hence $(a, a)\in E_\lambda$. \\
	(2)$\implies$(4): If $(a, a)\in E_\lambda$, then for all $J\in X_\lambda$
  we must have $\rrank(J\cap I_a)\geq a \implies I_a\subseteq J$. Suppose
  that for all $J\in X_\lambda$, $I_{a+1}\subseteq J$ or equivalently $\rrank(J\cap I_{a+1})\geq a+1$. This implies $\lambda_{a+1}=n-d=\lambda_a$, contradicting the assumption that $\lambda_a>\lambda_{a+1}$. \\
	(4)$\implies$(1): Suppose $\rrank(J\cap I_a)=a$ for all $J\in X_\lambda$
  but there exists some $J'\in X_\lambda$ such that $\rrank(J'\cap I_{a+1})<a+1$. It follows that $\lambda_a=n-d$ but $\lambda_{a+1}<n-d$.
\end{proof}

\begin{lem}\label{lem:inclusions2}
Let $X_\lambda$ be a Schubert subvariety of $\SB(d, \End_F(V))$ with
respect to the flag of right ideals $I_1\subset I_2\subset\dots\subset
I_n=\End_F(V)$, with $\rrank(I_r)=r$. The following are equivalent:

\begin{enumerate}

	\item $\lambda_d$ defines an outside South corner of the Young diagram of $\lambda$, i.e. $\lambda_d>0$

	\item $(d, a)\in E_\lambda$

	\item $a\geq d$ and $a\in \overline{E_\lambda}\setminus\overline{S_\lambda}$

	\item $I_{a}\supseteq J$ for all $J\in X_\lambda$ and there exists some $J'\in X_\lambda$ such that $I_{a-1}\not\supseteq J'$

\end{enumerate} 
\end{lem}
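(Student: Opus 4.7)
The plan is to mirror the proof of Lemma~\ref{lem:inclusions1}, swapping
the roles of east and south corners and the roles of ``contains'' and
``is contained in''. Just as an outside East corner at row $a$ corresponds
to the essential pair $(a,a)$ and to the condition $I_a\subseteq J$, the
outside South corner at row $d$ should correspond to the essential pair
$(d,a)$ (with $a=n-\lambda_d$) and to the dual condition $J\subseteq I_a$.
I would follow the same template of implications:
(1)$\Leftrightarrow$(2), (2)$\Rightarrow$(3)$\Rightarrow$(2), and
(2)$\Rightarrow$(4)$\Rightarrow$(1).

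For the combinatorial block, (1)$\Leftrightarrow$(2) is immediate from
the definition of $E_\lambda$ together with the convention
$\lambda_{d+1}=0$: the condition $\lambda_d>\lambda_{d+1}$ reads
$\lambda_d>0$, and the corresponding pair of $E_\lambda$ is
$(d,n-\lambda_d)$. For (2)$\Rightarrow$(3), note that $a=n-\lambda_d\ge d$
since $\lambda_d\le n-d$, and use the observation (noted just before
Proposition~\ref{prop:easy}) that each $a\in\overline{E_\lambda}$ has a
unique first-coordinate preimage; this forces
$a\notin\overline{S_\lambda}$, because $S_\lambda$ forbids $j=d$. For
(3)$\Rightarrow$(2), if the unique $(j,a)\in E_\lambda$ had $j<d$, then
$j<d\le a$ would land $(j,a)$ inside $S_\lambda$, contradicting
$a\notin\overline{S_\lambda}$; so $j=d$.

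For (2)$\Rightarrow$(4), the Schubert condition $\rrank(J\cap I_a)\ge d$
collapses to $J\subseteq I_a$ since $\rrank(J)=d$, giving the first
clause. To produce $J'\in X_\lambda$ with $J'\not\subseteq I_{a-1}$, I
would argue by contradiction: if every $J\in X_\lambda$ sat inside
$I_{a-1}$, then $X_\lambda$ would embed in the sub-Grassmannian
$X_{\lambda'}$ with $\lambda'=[(n-a+1)^d]$, forcing
$\lambda\supseteq\lambda'$ and hence $\lambda_d\ge n-a+1=\lambda_d+1$,
which is absurd. (This can equally be read off from the minimality of
$E_\lambda$ asserted by \cite[Lemma~3.14]{Fu92}.) For (4)$\Rightarrow$(1),
the analogous sub-Grassmannian comparison gives
$X_\lambda\subseteq X_{[(n-a)^d]}$ and hence $\lambda_d\ge n-a$; the
existence of $J'\not\subseteq I_{a-1}$ blocks the tighter containment in
$X_{[(n-a+1)^d]}$, so $\lambda_d\le n-a$ as well, pinning
$\lambda_d=n-a$. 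Provided $I_a$ is a proper ideal, this gives
$\lambda_d>0$.

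The main obstacle, exactly as in Lemma~\ref{lem:inclusions1}, is
(4)$\Rightarrow$(1): one has to pin down $\lambda_d=n-a$ cleanly and
exclude the degenerate case $a=n$ (for which $I_a=A$ makes the clause
``$J\subseteq I_a$ for all $J$'' vacuous). This should be handled by the
implicit restriction $a\le n-1$, the natural mirror of the $a\ge 1$
restriction in Lemma~\ref{lem:inclusions1}. Everything else reduces to
direct bookkeeping from the definitions of $E_\lambda$ and $S_\lambda$,
together with a single geometric input already used in the preceding
lemma (larger Young diagram = smaller Schubert variety).
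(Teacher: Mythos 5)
Your proposal follows the same template and uses the same core ideas as the paper's proof: the equivalence (1)$\iff$(2) comes directly from the definition of $E_\lambda$ with the convention $\lambda_{d+1}=0$; (2)$\iff$(3) is bookkeeping with the definition of $S_\lambda$ (your version of (3)$\Rightarrow$(2) using $j<d\le a$ directly is, if anything, cleaner than the paper's case split); and (2)$\iff$(4) rests on the observation that $\rrank(J\cap I_a)\ge d$ is equivalent to $J\subseteq I_a$ together with the minimality of the essential set, which you phrase as a sub-Grassmannian comparison while the paper phrases it via $E_\lambda$ — these are the same idea. You also correctly flag that (4)$\Rightarrow$(1) needs $a<n$ (so that $\lambda_d=n-a>0$); the paper makes the same implicit restriction when it writes $n-\lambda_d\le a<n$ without further comment.
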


\begin{proof}

	(1)$\iff$(2): It follows from the definition of $E_\lambda$ that $\lambda_d>0$ if and only if $(d, a)\in E_\lambda$. \\
	(2)$\implies$(3): It suffices to show that $a\notin \overline{S_\lambda}$, which follows immediately from the definition of $S_\lambda$, as $(j, a)\in S_\lambda\implies j<d$.\\
	(3)$\implies$(2): If $a\in\overline{E_\lambda}\setminus\overline{S_\lambda}$, then $(j, a)\in E_\lambda$ for some $j\leq d$ and either $j=d$ or $j=a$. If $a=d$, this implies immediately that $(d, a)\in E_\lambda$. On the other hand, if $a>d$ and $j=a$, then $j>d$, a contradiction. So, we must have $j=d$ and hence $(d, a)\in E_\lambda$. \\
	(2)$\implies$(4): If $(d, a)\in E_\lambda$, then for all $J\in X_\lambda$
  we must have $\rrank(J\cap I_{a})\geq d \implies I_{a}\supseteq J$. Suppose that for all $J\in X_\lambda$, $I_{a-1}\supseteq J$. This implies $(d, a-1)\in E_\lambda$, a contradiction. \\
	(4)$\implies$(1): Suppose $\rrank(J\cap I_a)=d$ for all $J\in X_\lambda$.
  Since $\rrank(J\cap I_{n-\lambda_d})=d$ by definition of $X_\lambda$, it follows that $n-\lambda_d\leq a<n$ and hence $\lambda_d>0$. 
\end{proof}

\begin{ex*}
Consider the partition $\lambda=[4, 2, 1]$ for $n=8, d=3$ from the previous example. In this case, $E_\lambda=\{(1, 2), (2, 5), (3, 7)\}$, and $S_{\lambda}=\{(1, 2), (2, 5)\}$. 

$$
\lambda= \ytableaushort
{\none \none \none 2, \none 5, 7}
*{5, 5, 5}
*[*(yellow)]{ 4,2, 1}\;\;\;\;\;\;\;  \mu=\ytableaushort
{\none \none \none 2, \none, \none \none 5}
*{5, 5, 5}
*[*(yellow)]{ 4, 2, 1}
*[*(yellow)] {4+0, 2+1, 1+2}\;\;\;\;\;\;\;  \mu'=\ytableaushort
{\none, \none \none \none \none 2, 7}
*{5, 5, 5}
*[*(yellow)]{ 4, 2, 1}
*[*(yellow)] {4+1, 2+3, 1+0}
$$
\end{ex*}

With this, we obtain the following corollary to Theorem \ref{thm:smooth}, which can be seen as an extension of the result to the ``smooth'' intersection conditions of an arbitrary $X_\lambda$.

\begin{cor}\label{cor:smoothenough}
If $\SB(d, A)$ has a closed subvariety $P$ such that $P\otimes_F K\simeq X_\lambda$, then $\ind(A) \mid \gcd(\overline{E_\lambda} \setminus \overline{S_\lambda})$. Moreover, there exist right ideals $I_k, I_m\subseteq A$ such that for any $J\in P(L)$, $(I_k\otimes_F L)\subseteq J\subseteq (I_m\otimes_F L)$. 
\end{cor}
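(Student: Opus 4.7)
The strategy is to adapt the Galois descent argument from Proposition~\ref{prop:containing} and Theorem~\ref{thm:smooth}: construct the desired right ideals in $A$ as the intersection and sum of the $K$-rational points of $P_K$, and then descend from $A_K$ to $A$ using that $P$ is defined over $F$. After replacing $K$ by its Galois closure, we may assume $K/F$ is Galois while retaining $P \otimes_F K \simeq X_\lambda$.

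By Lemmas~\ref{lem:inclusions1} and~\ref{lem:inclusions2}, the set $\overline{E_\lambda}\setminus\overline{S_\lambda}$ consists of at most one ``outside East'' index $k\le d$ and at most one ``outside South'' index $m\ge d$; these indices record precisely the inclusion conditions $I_k \subseteq J$ and $J \subseteq I_m$ satisfied by every $J \in X_\lambda$, where $I_k,I_m$ denote the corresponding members of the split flag of $A_K$. The divisibility $\ind(A) \mid \gcd(\overline{E_\lambda}\setminus\overline{S_\lambda})$ will follow once we exhibit right ideals of $A$ of reduced ranks $k$ and $m$.

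Following the proofs of Proposition~\ref{prop:containing} and Theorem~\ref{thm:smooth}, set
\[
\overline{I}_k \;=\; \bigcap_{J \in P_K(K)} J \qquad \text{and} \qquad \overline{I}_m \;=\; \sum_{J \in P_K(K)} J,
\]
as right ideals of $A_K$. The stabilizer of the flag in $\Aut(A_K)$ acts on $X_\lambda$ (preserving all Schubert intersection conditions), so both $\overline{I}_k$ and $\overline{I}_m$ are stable under this stabilizer and are therefore themselves flag ideals. Among all flag ideals, $\overline{I}_k$ is the largest one contained in every $J \in X_\lambda(K)$, and by Lemma~\ref{lem:inclusions1} this is precisely $I_k$; dually, $\overline{I}_m$ is the smallest flag ideal containing every $J \in X_\lambda(K)$ and equals $I_m$ by Lemma~\ref{lem:inclusions2}.

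Since $P$ is defined over $F$, the Galois group $\Gal(K/F)$ permutes $P_K(K)$ and hence preserves $\overline{I}_k$ and $\overline{I}_m$; by Galois descent these come from right ideals $I_k, I_m \subseteq A$ of reduced ranks $k$ and $m$, giving $\ind(A) \mid k$ and $\ind(A) \mid m$ and hence the asserted divisibility. The required containments $(I_k \otimes_F L) \subseteq J \subseteq (I_m \otimes_F L)$ for $J \in P(L)$ hold after base change to $\overline{F}$ by construction and are preserved by descent.

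The main obstacle is the split-side verification that any right ideal stable under the flag stabilizer in $\Aut(A_K)$ is a flag ideal; this is a standard representation-theoretic fact (only the flag subspaces of $V$ are preserved by the parabolic stabilizer in $\GL(V)$), and once established it combines with Lemmas~\ref{lem:inclusions1} and~\ref{lem:inclusions2} to pin down $\overline{I}_k$ and $\overline{I}_m$ exactly.
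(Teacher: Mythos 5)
Your proof is correct and matches the paper's intended approach: form the intersection and sum of the ideals in $P_K(K)$, identify them on the split side with the outside East/South flag ideals via Lemmas~\ref{lem:inclusions1} and~\ref{lem:inclusions2}, and descend to $F$ using the Galois invariance of $P$, exactly as in Theorem~\ref{thm:smooth}. The flag-stabilizer observation you invoke to pin down the reduced ranks of $\overline{I}_k$ and $\overline{I}_m$ is a clean way to make precise a step the paper leaves implicit.
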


In order to provide the full converse to Proposition \ref{prop:easy} for an arbitrary Schubert variety $X_\lambda$, it remains to show that if $\SB(d, A)$ contains an $F$-form $P$ of $X_\lambda$, then $\ind(A)\mid \gcd(\overline{S_\lambda})$. To do this, we construct closed subvarieties of $P$ which are defined over $F$ and to which we can apply Corollary \ref{cor:smoothenough}. These subvarieties will be obtained from the structure of $\Sing(X_\lambda)$.

We rely on the fact for an $F$-variety $X$, the singular locus of $X_{\overline{F}}$ defines a Zariski-closed subset $Z$ of $X$. By equipping $Z$ with the reduced induced scheme structure, we may consider $Z$ as a subvariety of $X$ defined over $F$. In particular, if $\SB(d, A)$ contains a subvariety $P$ defined over $F$ such that $P_{K}\simeq X_{\lambda}$ for a splitting field $K/F$ of $A$, then $P$ has a closed subvariety $Z\subset P$ defined over $F$ such that $Z_{K}\simeq \Sing(X_\lambda)$.



%




\subsection{An iterative process}

In general, the singular locus of a Schubert variety may have many irreducible components, none of which are required to be smooth. The next step for dealing with such a variety is to iterate this process by considering ``the singular locus of a component of the singular locus" until we achieve subvarietes of $P$ which are $F$-forms of smooth (or ``smooth enough'') Schubert varieties.

Starting with a variety $P$ defined over $F$ with $P\otimes_F K\simeq X_\lambda$, we can iterate the subvariety construction to achieve a closed subvariety $Z\subset P$ defined over $F$ with $Z\otimes_F K\simeq X_\mu$, provided that $\mu$ can be obtained from $\lambda$ by adding a finite number of hooks. We begin with a technical lemma which provides a combinatorial description of some particular partitions which can be formed by adding hooks to a given partition $\lambda$.

\begin{lem}\label{lem:replacement}
Consider a partition $\lambda=[\lambda_1, \dots, \lambda_d]$ and suppose $\lambda_j$ corresponds to an inside corner. That is, $(j, a)\in E_\lambda$ with $j<d$ and $j<a$. 

\begin{enumerate}

		\item If $a\leq d$, adding $a-j$ hooks to $\lambda$ will result in a partition $\mu=[\mu_1, \dots, \mu_d]$ such that
		\begin{align*}
		\mu_i=\begin{cases}
						n-d		&\text{if $i\leq a$}\\
					\lambda_i	&\text{if $i>a$}
		\end{cases}
		\end{align*}

		\item If $a\geq d$, adding $d-j$ hooks to $\lambda$ will result in a partition $\mu=[\mu_1, \dots, \mu_d]$ such that
		\begin{align*}
		\mu_i=\begin{cases}
					\lambda_i		&\text{if $\lambda_i> n-a$}\\
					n-a				&\text{if $\lambda_i\leq n-a$}
		\end{cases}
		\end{align*}

		\item In both cases, $a\in \overline{E_\mu}\setminus \overline{S_\mu}$.

	\end{enumerate}
\end{lem}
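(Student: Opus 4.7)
The plan is to prove both cases by induction on the number of hooks added. The key input is the explicit description of the singular components of $X_\lambda$ from \cite{BL} or \cite{Coskun11}: the irreducible components of $\Sing(X_\lambda)$ are in bijection with the inside corners of $\lambda$, i.e.\ with the elements of $S_\lambda$, and for each $(j,a) \in S_\lambda$ the partition $\mu$ corresponding to the associated hook is determined by a combinatorial rule on the Young diagram. The bulk of the proof is translating this rule into essential-set language and iterating.

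For Case 1 ($j < a \leq d$), I would first verify that a single hook at $(j, a) \in S_\lambda$ produces a partition $\lambda^{(1)}$ whose essential set is obtained from $E_\lambda$ by replacing $(j, a)$ with $(j+1, a)$: informally, the inside corner ``moves down one row'' while the region above and to the left of the corner becomes rectangular with rows pushed out to column $n-d$. The crucial observation is that whenever $j + 1 < a$, the new pair $(j+1, a)$ lies in $S_{\lambda^{(1)}}$, so the procedure iterates. After exactly $a - j$ steps the moving corner arrives at $(a, a) \in E_\mu$, at which point the condition $j < a$ fails and the iteration terminates. Tracking the rows that get filled to $n-d$ at each step produces the cumulative description $\mu_i = n - d$ for $i \leq a$ and $\mu_i = \lambda_i$ for $i > a$.

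Case 2 ($j < d \leq a$) is handled by the same inductive scheme, but now the single hook extends the diagram downward: it introduces a new inside corner $(j+1, a) \in E_{\lambda^{(1)}}$, and fills rows so that the shape beneath row $j$ becomes rectangular with columns out to $n - a$. After $d - j$ iterations the corner arrives at $(d, a) \in E_\mu$, where the condition $j < d$ fails, so it is no longer in $S$. The cumulative effect is exactly that every row with $\lambda_i \leq n - a$ is raised to $n - a$, while rows with $\lambda_i > n - a$ remain unchanged. Part (3) is then immediate from Lemmas~\ref{lem:inclusions1} and \ref{lem:inclusions2}: in Case 1, $(a, a) \in E_\mu$ with $a \leq d$ puts $a \in \overline{E_\mu} \setminus \overline{S_\mu}$ by Lemma~\ref{lem:inclusions1}; in Case 2, $(d, a) \in E_\mu$ with $a \geq d$ does the same by Lemma~\ref{lem:inclusions2}.

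The main obstacle is the precise verification of the single-hook step: one has to unpack the Young-diagram description of the hook corresponding to a given inside corner $(j, a) \in S_\lambda$ and then carefully identify the essential set of the resulting $\lambda^{(1)}$, checking both that $(j+1, a)$ appears and that no spurious new inside corners are introduced, while surrounding entries of $E_\lambda$ are correctly absorbed or preserved. Once this translation between the box coordinates used in \cite{BL}, \cite{Coskun11} and the essential-set coordinates used here is in hand, the induction and the identification of $\overline{E_\mu} \setminus \overline{S_\mu}$ are mechanical.
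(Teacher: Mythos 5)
Your proof of part (3) is identical to the paper's: apply Lemma~\ref{lem:inclusions1} to $(a,a)\in E_\mu$ when $a\le d$ and Lemma~\ref{lem:inclusions2} to $(d,a)\in E_\mu$ when $a\ge d$. For parts (1) and (2) the paper simply declares the verification ``strictly computational'' and omits it; your one-hook-at-a-time induction is a reasonable way to supply that computation, and you correctly flag the real work (verifying the single-hook step, including that entries like $(1,2)$ or $(3,7)$ can be absorbed rather than merely shifted) as the unfinished part of the sketch.
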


\begin{proof}
We omit the proof of parts (1) and (2) as they are strictly computational. Consider $(j, a)\in E_\lambda$ and suppose first that $a\leq d$. Applying part (1), we have $(a, a)\in E_\mu$, and it follows by Lemma \ref{lem:inclusions1} that $a\in \overline{E_\mu}\setminus\overline{S_\mu}$. Next suppose $a\geq d$. Applying part (2) we have $(d, a)\in E_\mu$, and it follows by Lemma \ref{lem:inclusions2} that $a\in \overline{E_\mu}\setminus\overline{S_\mu}$.
\end{proof}

\begin{ex*}

	Consider $n=9$, $d=4$ and $\lambda=[4, 3, 1, 0]$. The diagram of $\lambda$ has 3 inside corners with $E_\lambda=\{(1, 2), (2, 4), (3, 7)\}$ and $\overline{S_\lambda}=\{2, 4, 7\}$. 
	$$
	\lambda= \ytableaushort
	{\none \none \none 2, \none \none 4, 7, \none}
	*{5, 5, 5, 5}
	*[*(yellow)]{4, 3, 1}
	$$

	\begin{itemize}

		\item For $(1, 2)$, we have $a=2<d$, so applying part 1 of Lemma \ref{lem:replacement}, we obtain the partition $\mu=[5, 5, 1, 0]$ after adding one hook. 

		\item For $(2, 4)$, we have $a=4=d$, so either part of Lemma \ref{lem:replacement} may be applied to obtain the partition $\alpha=[5, 5, 5, 5]$ after adding 2 hooks. 

		\item For $(3, 7)$, we have $a=7>d$, so applying part 2 of Lemma \ref{lem:replacement}, we obtain the partition $\beta=[4, 3, 2, 2]$ after adding one hook.

	\end{itemize}
		$$
		\mu= \ytableaushort
		{\none, \none \none \none \none 2, 7, \none}
		*{5, 5, 5, 5}
		*[*(yellow)]{4, 3, 1}
		*[*(red)]{4+1, 3+2}\;\;\;\;\;\;\;\;\;
			\alpha= \ytableaushort
			{\none, \none, \none, \none \none \none \none 4}
			*{5, 5, 5, 5}
			*[*(yellow)]{4, 3, 1}
			*[*(red)]{4+1, 3+2, 1+4, 5}\;\;\;\;\;\;\;\;\;
			\beta= \ytableaushort
			{\none \none \none 2, \none \none 4, \none, \none 7}
			*{5, 5, 5, 5}
			*[*(yellow)]{4, 3, 1}
			*[*(red)]{0, 0, 1+1, 2}
		$$

Note that $X_\alpha$ is smooth, while $S_\mu=\{(3, 7)\}$ and $S_\beta=\{(1, 2), (2, 4)\}$. 		
\end{ex*}


\subsection{Galois action on the singular locus}

We desire a stronger claim than the existence of an $F$-form of
$\Sing(X_\lambda)$. In particular, we would like to say that for any
Schubert variety $X_\mu\subseteq\Sing(X_\lambda)$, if $P$ is a twisted form
of $X_\lambda$ defined over $F$, then $P$ has a closed subvariety $Z\subset
P$, also defined over $F$, such that $Z$ is a twisted form of $X_\mu$.
More
precisely, we have:

\begin{lem}
Let $K/F$ be a Galois splitting field for $A$, and suppose that we have a
subvariety $P$ of $SB(d, A)$, such that $P_K = X_\lambda$. If $X_\mu
\subset \Sing(X_\lambda)$ is an irreducible component of the singular locus
of $X_\lambda$, defined by the addition of a hook to the Young diagram for
$\lambda$, then there exists a subvariety $Z \subset P$ such that $Z_K =
X_\mu$.
\end{lem}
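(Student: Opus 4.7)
The plan is to show that $X_\mu \subset P_K$ is stable under the Galois action on $P_K$ induced by the $F$-structure on $P$, and then invoke Galois descent to produce $Z$.

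Since $P$ is defined over $F$ and $K/F$ is Galois, the group $\Gal(K/F)$ acts on $A_K$ and hence on $\SB(d,A_K)$, preserving reduced ranks of right ideals; moreover $P_K = X_\lambda$ is Galois-stable by construction. Consequently $\Sing(X_\lambda)$ is intrinsically Galois-stable and its irreducible components are permuted by $\Gal(K/F)$. By the discussion preceding Lemma \ref{lem:replacement}, these components are precisely the Schubert varieties $X_{\mu^{(k)}}$ indexed by the inside corners $(j_k, a_k) \in S_\lambda$, and the task reduces to showing each one is individually fixed.

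The key step is to attach to each component $X_{\mu^{(k)}}$ an intrinsic right ideal
$$
I_k := \begin{cases} \bigcap_{J \in X_{\mu^{(k)}}(K)} J & \text{if } a_k \le d,\\ \sum_{J \in X_{\mu^{(k)}}(K)} J & \text{if } a_k \ge d, \end{cases}
$$
and to verify $\rrank(I_k) = a_k$. Combining Lemma \ref{lem:replacement}(3) with Lemmas \ref{lem:inclusions1} and \ref{lem:inclusions2}, one identifies $X_{\mu^{(k)}}$ with the set of $J \in X_\lambda$ containing (respectively contained in) $I_k$; the boundary case $a_k = d$ is handled by noting that both definitions then coincide and $X_{\mu^{(k)}}$ consists of the single point $I_k$. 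Since intersection and sum commute with the Galois action, if $\sigma(X_{\mu^{(k)}}) = X_{\mu^{(k')}}$ then $\sigma(I_k) = I_{k'}$, and because Galois preserves reduced rank, $a_k = a_{k'}$. The integers in $\overline{S_\lambda}$ are pairwise distinct, so the map $k \mapsto a_k$ is injective; this forces $k = k'$, and hence $\sigma(X_{\mu^{(k)}}) = X_{\mu^{(k)}}$ for every $\sigma \in \Gal(K/F)$. Galois descent applied to the Galois-stable closed subset $X_\mu \subset P_K$, equipped with the reduced induced structure, then yields the required $F$-subvariety $Z \subset P$ with $Z_K = X_\mu$.

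The main obstacle I anticipate lies in verifying that the intrinsic intersection/sum construction really recovers an ideal of reduced rank exactly $a_k$ and coincides with the distinguished ideal appearing in the Schubert description of $X_{\mu^{(k)}}$, along with correctly handling the boundary case $a_k = d$ where the two cases merge. Once these combinatorial-geometric checks are in hand, the Galois-stability conclusion and the descent step are routine.
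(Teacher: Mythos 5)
Your overall strategy — show that each irreducible component of $\Sing(X_\lambda) = \Sing(P_K)$ is individually fixed by $\Gal(K/F)$ and then descend — is the right one, and it is essentially the strategy of the paper. However, the specific mechanism you use to distinguish the components has a genuine gap, which you yourself flagged at the end and which does in fact materialize.

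The problem is that Lemma~\ref{lem:replacement}(3) is a statement about the partition obtained after adding $a-j$ (resp.~$d-j$) hooks; it says nothing about the partition $\mu^{(k)}$ obtained after a \emph{single} hook addition, which is what is relevant here. In general $a_k \in \overline{E_{\mu^{(k)}}}$ (via $(j_k+1,a_k)\in E_{\mu^{(k)}}$), but $a_k \notin \overline{E_{\mu^{(k)}}} \setminus \overline{S_{\mu^{(k)}}}$, so neither the inclusion $I_{a_k} \subseteq J$ nor $J\subseteq I_{a_k}$ holds for all $J\in X_{\mu^{(k)}}$, and $\rrank(I_k)\neq a_k$. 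Concretely, take the paper's second example $\lambda=[4,3,1,0]$ with $n=9$, $d=4$, and the inside corner $(2,4)\in S_\lambda$ (so $a_k=4=d$). The corresponding component of $\Sing(X_\lambda)$ is $X_\mu$ with $\mu=[4,4,4,0]$ and $E_\mu = \{(3,4)\}$, $S_\mu=\{(3,4)\}$, so $\overline{E_\mu}\setminus\overline{S_\mu}=\emptyset$: here $\bigcap_{J\in X_\mu}J = 0$ and $\sum_{J\in X_\mu}J$ is all of $A_K$, neither of reduced rank $4$. In particular, your assertion that when $a_k=d$ both definitions coincide and $X_{\mu^{(k)}}$ is a single point is false — $X_{[4,4,4,0]}$ is $8$-dimensional. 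So the invariant you construct does not verify the desired injectivity, and the argument as written does not close.

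The paper's proof avoids the combinatorics entirely. It observes that the Galois action on $X_\lambda=P_K$ is a semilinear automorphism of $\Gr(d,V_K)$; modulo the standard split semilinear action it is given by an element of $\PGL(V_K)$, and both of these preserve the partition type of any Schubert subvariety (they carry $X_\nu^{\mathcal{E}}$ to $X_\nu^{\mathcal{E}'}$ for some flag $\mathcal{E}'$). Since the irreducible components of $\Sing(X_\lambda)$ carry pairwise distinct partitions $\mu$, the induced permutation of components is trivial. Regarding them as $K$-points of the Hilbert scheme of $\SB(d,A)$, they are therefore $\Gal(K/F)$-fixed, hence descend to the desired $F$-subvarieties $Z\subset P$. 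If you want to salvage your version, you would need a Galois-invariant of a component that demonstrably separates the $\mu^{(k)}$'s after a single hook — the partition type itself (or, equivalently, the $\PGL(V_K)$-orbit in the Hilbert scheme) is the natural such invariant, and it is precisely what the paper uses.
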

\begin{proof}
By the geometric description of the irreducible components of the singular
set, it is automatic that the Galois action, which acts via elements of
$PGL(V_K)$ cannot nontrivially permute the components of the singular set.
Hence, considered as points on the Hilbert scheme of $SB(d, A)$, these
irreducible components are fixed by the Galois action, and hence correspond
to $F$-rational subvarieties $Z \subset P$ as claimed.
\end{proof}

\begin{prop}\label{prop:addinghooks}
Let $\lambda$ and $\mu$ be partitions defining Schubert subvarieties of $\Gr(d, n)$ such that $\mu$ is obtained from $\lambda$ by adding finitely many hooks. For a central simple $F$-algebra $A$ of degree $n$, if $\SB(d, A)$ contains a closed subvariety $P$ defined over $F$ such that $P_{\overline{F}}\simeq X_\lambda$, then $P$ contains a closed subvariety $Z$ defined over $F$ such that $Z_{\overline{F}}\simeq X_\mu$. 
\end{prop}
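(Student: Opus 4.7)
The plan is to prove the proposition by induction on the number $k$ of hooks needed to pass from $\lambda$ to $\mu$, using the preceding lemma as the one-step ``add a single hook'' move. First, I would reduce from $\overline{F}$ to a finite Galois splitting field $K/F$ of $A$ (which exists since $A$ is split by a finite separable extension), fix an identification $\SB(d,A)_K \simeq \Gr(d,n)$, and fix a complete flag of right ideals of $\End_K(V_K)$ with respect to which $P_K = X_\lambda$. This flag then serves to define every Schubert variety appearing later in the induction, so there is no ambiguity when we speak of $X_{\nu}$ for an intermediate partition $\nu$.

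Next, I would write a sequence of intermediate partitions $\lambda = \nu_0, \nu_1, \ldots, \nu_k = \mu$, with $\nu_{i+1}$ obtained from $\nu_i$ by adjoining a single South-East hook, so that $X_{\nu_{i+1}}$ is an irreducible component of $\Sing(X_{\nu_i})$. The case $k = 0$ is trivial (take $Z = P$), and $k = 1$ is exactly the preceding lemma. For the inductive step, the inductive hypothesis applied to $\nu_{k-1}$ (which is reached from $\lambda$ by $k-1$ hooks) yields an $F$-rational closed subvariety $Z' \subset P$ with $Z'_K \simeq X_{\nu_{k-1}}$; then applying the preceding lemma with $Z'$ in place of $P$ and $\nu_{k-1}$ in place of $\lambda$ produces an $F$-rational closed subvariety $Z \subset Z' \subset P$ with $Z_K \simeq X_\mu$, as required.

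The only point that needs verification is that the hypotheses of the preceding lemma genuinely apply at each inductive step: the lemma requires a closed $F$-subvariety of $\SB(d, A)$ whose base change to $K$ is a Schubert variety with respect to a flag on $V_K$, and by construction $Z'$ satisfies this with respect to the single flag fixed at the outset. Because the same flag is reused at every level, no new Galois-descent issue arises between steps, and the induction proceeds cleanly. In effect all the essential geometric content lies in the preceding lemma, and the only potential obstacle here is the organizational one of threading the iteration coherently; this is disposed of by the initial choice of flag and the observation that each $Z'$ is itself a subvariety of $\SB(d, A)$, so that the lemma may be applied to it exactly as it was applied to $P$.
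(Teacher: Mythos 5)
Your proposal is correct and takes essentially the same approach as the paper: both proceed by induction on the number of hooks, applying the preceding Galois-descent lemma for a single hook at each step of a chain $\lambda = \nu_0, \ldots, \nu_k = \mu$. Your additional care in pinning down a single flag on $V_K$ at the outset is a reasonable bookkeeping clarification that the paper leaves implicit, but it does not change the substance of the argument.
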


\begin{proof}
If $\mu$ is obtained from $\lambda$ by adding finitely many hooks, we may form a sequence $\alpha_1, \dots, \alpha_k$ such that $\lambda=\alpha_1$, $\mu=\alpha_k$ and for each $2\leq i\leq k$, $\alpha_{i}$ is obtained from $\alpha_{i-1}$ by adding precisely one hook. It follows from the definition of the singular locus that for each $2\leq i\leq k$, $X_{\alpha_i}\in \Sing(X_{\alpha_{i-1}})$.

Under the assumption that $\SB(d, A)$ contains a twisted form of $X_{\alpha_1}$ over $F$, the above argument implies that $\SB(d, A)$ must also contain a twisted form of $X_{\alpha_2}$ over $F$. By induction on $i$, we obtain the result that $\SB(d, A)$ must finally contain a twisted form of $X_{\alpha_k}=X_\mu$ defined over $F$. 
\end{proof}

This process yields the desired converse to Proposition \ref{prop:easy}, and can in fact be taken one step further to show that such a variety is in fact defined by ``Schubert-style'' intersection conditions as in \eqref{intersections}. The result can therefore be seen as a complete generalization of Theorems \ref{thm:Artin} and \ref{thm:subgrassmannian} to the set of all twisted Schubert subvarieties of $\SB(d, A)$.

\begin{thm} \label{thm:all}
The generalized Severi-Brauer variety $\SB(d, A)$
has a closed subvariety $P$ such that $P\otimes_F K\simeq X_{\lambda}$
for a Schubert subvariety $X_\lambda$ 
if
and only if $\ind(A)\mid \gcd(\overline{E_\lambda})$. Moreover, in this
case, $A$
contains a flag of right ideals $I_{a_1}\subset \cdots \subset I_{a_r}$ for
$\overline{E_{\lambda}}=\{a_1, \dots, a_r\}$ such that for any finite
extension $L/F$, 
$$
P(L)=\{J\subseteq A_L: \rk(J\cap (I_a)_L)\ge j \mbox{ for } (j,a)\in
E_{\lambda}\}
$$
\end{thm}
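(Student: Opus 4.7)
The forward direction is given by Proposition \ref{prop:easy}, so the plan concentrates on the converse and the explicit flag. My strategy is, for each $a \in \overline{E_\lambda}$, to extract from $P$ a canonical right ideal $I_a \subset A$ of reduced rank $a$ together with the divisibility $\ind(A) \mid a$; then to show that these ideals assemble into a flag; and finally to identify $P$ with the subvariety they cut out.

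Fix $a \in \overline{E_\lambda}$ and let $j$ be the unique index with $(j,a) \in E_\lambda$. If $a \notin \overline{S_\lambda}$, Corollary \ref{cor:smoothenough} directly supplies an ideal $I_a$ of reduced rank $a$ and the divisibility $\ind(A) \mid a$. If instead $a \in \overline{S_\lambda}$, then $j < d$ and $j < a$, and Lemma \ref{lem:replacement} produces a partition $\mu$ obtained from $\lambda$ by adding finitely many hooks such that $a \in \overline{E_\mu} \setminus \overline{S_\mu}$. Proposition \ref{prop:addinghooks} then gives a closed subvariety $Z_a \subset P$ defined over $F$ with $(Z_a)_{\overline{F}} \simeq X_\mu$, and applying Corollary \ref{cor:smoothenough} to $Z_a$ yields $I_a$ and $\ind(A) \mid a$. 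Taking the gcd over all $a \in \overline{E_\lambda}$ gives $\ind(A) \mid \gcd(\overline{E_\lambda})$.

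Next I would show that the collection $\{I_a\}_{a \in \overline{E_\lambda}}$ forms a flag. Each such ideal was obtained, following the strategy in the proofs of Proposition \ref{prop:containing} and Theorem \ref{thm:smooth}, by Galois descent from either an intersection $\bigcap_J J$ or a sum $\sum_J J$ over the $K$-points of an appropriate Galois-stable subvariety of $P_K$, where $K/F$ is a Galois splitting field of $A$. Over $K$, each such construction recovers exactly the member of reduced rank $a$ of the original defining flag of $X_\lambda$. Since that flag is totally ordered by inclusion, the $(I_a)_K$'s are nested, and by faithful flatness of $K/F$ this nesting descends to $F$.

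Finally, to identify $P$ with the subvariety defined by the intersection conditions, I would apply the construction of Proposition \ref{prop:easy} to the flag $\{I_a\}$ to produce a closed $F$-subvariety $P' \subset \SB(d,A)$ whose $L$-points are exactly those described in the statement. By construction of the ideals, every $L$-point of $P$ satisfies these rank conditions, so $P \subseteq P'$; moreover both base-change to $X_\lambda$ over $K$, giving equality of $F$-schemes. The step I expect to be the main obstacle is the third paragraph: verifying that the ideals produced for different inner corners, via different iterated singular-locus constructions, really lie in a common chain rather than being unrelated ideals of the correct reduced ranks. This reduces to the canonicity of the descent construction, ensuring that every $I_a$ literally equals the corresponding member of the original Schubert flag after base change to $K$.
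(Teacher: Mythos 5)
Your proposal is essentially correct and follows the same overall architecture as the paper's proof (reduce to smooth intersection conditions via Corollary \ref{cor:smoothenough}, handle inside corners via Lemma \ref{lem:replacement} and Proposition \ref{prop:addinghooks}, then identify $P$ with the variety cut out by the intersection conditions). The one genuine point of divergence is how the flag is produced. The paper, having established $\ind(A)\mid\gcd(\overline{E_\lambda})$, simply \emph{chooses} a partial flag $I_{a_1}\subset\cdots\subset I_{a_r}$ in $A$ of the right ranks, extends it over the splitting field $K$ to a full flag, defines $X_\lambda$ and $P_\lambda$ relative to that choice, and then shows $P\subseteq P_\lambda$ on $L$-points and concludes $P=P_\lambda$ by a base-change argument. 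You instead \emph{extract} each $I_a$ canonically from $P$ itself, as the Galois descent of $\bigcap J$ or $\sum J$ over the $K$-points of the relevant iterated-singular-locus subvariety, and then verify separately that these ideals form a chain because each $(I_a)_K$ recovers the corresponding member of the flag defining $P_K$. This is more work, but it is also more honest: it is exactly what makes the ``moreover'' clause a statement about $P$ rather than about some a priori unrelated $P_\lambda$, and it removes the implicit identification (between the chosen flag over $F$ and the flag underlying $P_K$) that the paper's argument leans on without much comment. Your observation that this compatibility is ``the main obstacle'' is exactly right, and your resolution --- that the descent construction is canonical and over $K$ each ideal literally equals the original flag member of reduced rank $a$ (since $X_\mu$ is cut out by the same flag as $X_\lambda$, so the outside-corner intersection/sum over $X_\mu(K)$ returns $I_a'$) --- is correct. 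The final step, producing $P'$ via Proposition \ref{prop:easy}, showing $P\subseteq P'$ from the rank conditions pulled through base change, and concluding $P=P'$ because both have base change $X_\lambda$, is the same closing move as the paper's cokernel argument.
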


\begin{proof}
If $X_\lambda$ is smooth, then the result follows immediately from Theorem \ref{thm:smooth}. 
Suppose $(j, a)\in {S_\lambda}$ for some $j<d$ and $j<a$. Using the replacement process described in Lemma \ref{lem:replacement} together with Proposition \ref{prop:addinghooks}, $P$ has a closed subvariety $Z$ defined over $F$ such that $Z_{\overline{F}}\simeq X_\mu$ where $\mu$ is obtained from $\lambda$ by adding hooks and $a\in \overline{E_\mu} \setminus \overline{S_\mu}$. Applying Corollary \ref{cor:smoothenough} we must have $\ind(A) \mid a$.

Now, suppose $\ind(A)\mid \gcd(\overline{E_\lambda})$. This condition
implies that $A$ contains a flag of right ideals $I_{a_1}\subset \cdots
I_{a_r}$ for  $\overline{E_{\lambda}}=\{a_1, \dots, a_r\}$. For a splitting
field $K/F$ of $A$, fix a full flag of right ideals
${I_1}'\subset{I_2}'\subset\cdots\subset{I_n}'=A_K$ such that $I_{a_j}\otimes_F K={I_{a_j}}'$ for all $a_j\in \overline{E_\lambda}$. Let $X_\lambda$ be the Schubert subvariety of $\SB(d, A_K)$ defined by $\lambda$ with respect to this flag.

Denote by $P_\lambda$ the $F$-subvariety of $\SB(d, A)$ defined as in the statement of the theorem. That is, for any $L/F$, 
$$
P_\lambda(L):=\{J\subseteq A_L: \rk(J\cap (I_a)_L)\ge j \mbox{ for } (j,a)\in E_{\lambda}\}
$$

If $P$ is a twisted form of $X_{\lambda}$ defined over $F$, the goal is to show that $P=P_\lambda$. 

Let $L/F$ be an arbitrary finite field extension and let $J\in P(L)$.  After extending to a splitting field $K/L$, we find that for any $(j,a)\in E_{\lambda}$, we have 
$$
\rk(J\cap (I_a)_L)=\rk((J\cap (I_a)_L)_K)=\rk(J_K\cap {I_a}')\ge j
$$ 
since $I_K\in P_K(K)=X_{\lambda}(K)$. So $P(L)\subseteq P_{\lambda}(L)$.

We have that $i:P\hookrightarrow P_{\lambda}$ is an inclusion of $F$-varieties since $i_L:P(L)\hookrightarrow P_{\lambda}(L)$ for all finite field extensions $L/F$. If $K/F$ is a splitting field for $P$, then 
$i_K$ induces the identity map.  So $\coker(i)_K=\coker(i_K)=0$.  Thus, $\coker(i)$ is a form of the zero variety and so $\coker(i)=0$.  It follows that $P=P_{\lambda}$ as required. 
\end{proof}


\section{Schubert cycles}

Schubert subvarieties are of particular interest as they form the building
blocks for the Grothendieck group and Chow group of $\Gr(d, n)$. In this
section we apply our results on twisted forms of Schubert varieties to
questions concerning rational cycles and torsion elements in these
cohomology theories.


\subsection{The topological filtration}

Let $X$ be a smooth projective variety over $F$ and consider the Grothendieck group 
$$
\K0(X)=\langle [\mathcal{O}_V] \mid V\subseteq X \text{ closed subvariety}\rangle.
$$

The topological filtration on $\K0(X)$ can be defined by setting
$$
\K0(X)^{(i)}:=\langle [\mathcal{O}_V] \mid \codim(V)\geq i\rangle \;\;\;\;\text{ and then }\;\;\;\; T^{i}(X):=\K0(X)^{(i)}/\K0(X)^{(i+1)}
$$
Note that $T^1(X)\cong \CH^1(X)=\Pic(X)$ and $T^2(X)\cong \CH^2(X)$. In general, there is a natural surjection $\CH^k(X)\to T^k(X)$ which only induces an isomorphism over $\ZZ[\frac{1}{(k-1)!}]$.

We denote by $\overline{X}$ the variety $X$ over the algebraic closure of $F$. For the case $\overline{X}=\Gr(d,n)$,  a $\ZZ$ basis of $T^i(\Gr(d,n))$ is given by 
$$
\{\Sigma_{\lambda}: |\lambda|=i, n-d\ge \lambda_1\ge \cdots \ge \lambda_d\ge 0\}
$$
where $\lambda=[\lambda_1, \dots, \lambda_d]$ is a partition of $|\lambda|=\sum_{j=1}^d\lambda_j$ and $\Sigma_\lambda=[\mathcal{O}_V]$ for $V=X_\lambda$.

For $X=\SB(d, A)$ and a splitting field $K/F$ for $A$, $\K0(X)$ does not have such a basis, but we may consider the image of the restriction map 
$$\res^i:T^i(X)\to T^i(X_{K})\simeq T^i(\Gr(d, n)).$$

In particular, we can ask: \emph{What is the smallest integer $a_\lambda\in\ZZ$ such that $a_\lambda\Sigma_\lambda\in \im(\res^i)$?} An initial upper bound for these integers in the usual Severi-Brauer variety case was provided by Karpenko.

\begin{thm}[\cite{Ka95} Lem. 3]
Let $X=\SB(A)$ with $\ind(A)=r$. For any $0\leq i\leq \dim(X)$,  
$$
\left(\dfrac{r}{\gcd(r, i)}\right) \Sigma_{i}\in \im\left(T^i(X)\xrightarrow{\res^i}T^i(\overline{X})\right)
$$
\end{thm}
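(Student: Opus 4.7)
The plan is to exhibit a rational class in $T^i(X)$ whose restriction to $T^i(\overline X) = \mathbb Z \cdot \Sigma_i$ equals $(r/\gcd(r,i)) \Sigma_i$. I would approach this via Quillen's computation of $K_0(\SB(A))$, which provides a conceptual framework; the paper's Theorem~\ref{thm:Artin} can then supply the geometric input needed to realize the required cycle explicitly if one prefers to avoid invoking Quillen directly.

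First, I would invoke Quillen's theorem: $K_0(\SB(A)) \cong \bigoplus_{j=0}^{n-1} K_0(A^{\otimes j})$. After extending to a splitting field $K/F$ and identifying $K_0(\Pj^{n-1}_K) = \mathbb Z[\beta]/(\beta-1)^n$ with $\beta = [\mathcal O(-1)]$, the $j$-th summand maps to $\ind(A^{\otimes j})\,\mathbb Z \cdot \beta^j$. Writing $\eta = \beta - 1 \in K_0^{(1)}(\Pj^{n-1}_K)$, so that $\eta^i \equiv (-1)^i \Sigma_i$ in $T^i$ and $\beta^j = (1+\eta)^j$, the $T^i$-component of the image of the $j$-th summand is $\pm \ind(A^{\otimes j})\binom{j}{i} \Sigma_i$. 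Taking $j=i$ already yields $\ind(A^{\otimes i}) \Sigma_i \in \im(\res^i)$, so the theorem reduces to proving the divisibility $\ind(A^{\otimes i}) \,\big|\, r/\gcd(r, i)$.

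The main obstacle is this divisibility. I would reduce to $p$-primary parts: with $a = v_p(r)$ and $b = v_p(i)$, the goal is $v_p(\ind(A^{\otimes i})) \le \max(0, a - b)$. When $b \ge a$, we have $p^a \mid i$, and since the $p$-part of $\text{per}(A)$ divides $p^a$, the algebra $A^{\otimes p^a}$ has trivial $p$-primary Brauer class, giving $v_p(\ind(A^{\otimes i})) = 0$. The harder case $b < a$ is the combinatorial heart of the result; it can be handled via general index-reduction statements for tensor products, or bypassed via a direct cycle construction. For the latter, Theorem~\ref{thm:Artin} provides rational classes $[\mathcal O_{\SB(B_k)}]$ for $1 \le k \le n/r$, which combine with powers of the Picard generator $\mathcal L$ (restricting to $\mathcal O(\text{per}(A))$) and the exterior powers of the tautological right $A$-module $\mathcal J$ (restricting to $\mathcal O(-1)^{\oplus n}$). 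Products of these classes, whose $T^\bullet$-components are computed via the Koszul resolution of $\Pj^{kr-1}\hookrightarrow \Pj^{n-1}$, yield contributions to $T^i$ of the form $\binom{n}{j}\binom{j-rm}{i}\Sigma_i$ for varying $j,m$; a combinatorial analysis of the subgroup of $\mathbb Z \cdot \Sigma_i$ these generate then produces the bound $(r/\gcd(r,i))\Sigma_i$.
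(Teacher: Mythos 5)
Your proposal routes through Quillen's $K_0$-decomposition, whereas the paper (following Karpenko) uses a short transfer argument. There is a serious gap in the $K_0$-route as you have set it up.

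The assertion that ``taking $j=i$ already yields $\ind(A^{\otimes i})\Sigma_i \in \im(\res^i)$'' is not valid. The $j$-th Quillen summand maps into $K_0(\overline X)=\mathbb Z[\beta]/(\beta-1)^n$ as $\ind(A^{\otimes j})\beta^j$, and this element sits in filtration level $0$, not level $j$: writing $\eta=\beta-1$, one has $\beta^j=(1+\eta)^j$ with nonvanishing $\eta^0,\eta^1,\dots$ coefficients. An element of $\im(\res^i)$ must arise from a class in $K_0(X)$ whose image lies in $K_0(\overline X)^{(i)}$, i.e.\ a combination $\sum_j c_j\,\ind(A^{\otimes j})\beta^j$ whose $\eta^k$-coefficients vanish for all $k<i$; only then does its $\eta^i$-coefficient give a rational multiple of $\Sigma_i$. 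Determining which multiples of $\Sigma_i$ arise this way is precisely the nontrivial content (this is what Karpenko carries out for $i=2$ in~\cite{Ka98}), and reading off $\binom{j}{i}\ind(A^{\otimes j})$ from a single summand does not address it. Beyond that, the divisibility $\ind(A^{\otimes i})\mid r/\gcd(r,i)$ that your reduction rests on is simply asserted in the case $v_p(i)<v_p(r)$ and deferred to unspecified ``index-reduction statements'' or a sketchy cycle construction; that is not a proof. So the argument as written has no valid starting point.

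The paper's argument (the proof of Theorem~\ref{thm:cycles}, explicitly described as ``a direct generalization of Karpenko's proof'') avoids $K_0$ entirely. After reducing to the case where $F$ has no prime-to-$p$ extensions, write $r=p^a$ and $\gcd(r,i)=p^b$, and choose a finite extension $E/F$ of degree $p^{a-b}$ with $\ind(A_E)=p^b$. Since $p^a\mid n$ and $p^b\mid i$, we have $p^b\mid n-i$, so by Artin's Theorem~\ref{thm:Artin} there is a closed subvariety $P\subset\SB(A_E)$ defined over $E$ with $P_K\cong\Pj^{n-1-i}$. Hence $\Sigma_i\in\im\bigl(T^i(\SB(A_E))\to T^i(\overline X)\bigr)$, and applying the norm $N_{E/F}$ produces $p^{a-b}\Sigma_i=\bigl(r/\gcd(r,i)\bigr)\Sigma_i\in\im(\res^i)$.
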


Replacing codimension $i$ by the $\gcd$ of the projection of the essential set $\overline{E_\lambda}$ for a partition $\lambda$ with $|\lambda|=i$, we obtain a similar expression for generalized Severi-Brauer varieties.

\begin{thm}\label{thm:cycles}
Let $X=\SB(d, A)$ with $\ind(A)=r$. For any $0\leq i\leq \dim(X)$ and $|\lambda|=i$,
$$
\left(\frac{r}{\gcd(r, \overline{E_\lambda})}\right) \Sigma_{\lambda}\in  \im\left(T^i(X)\xrightarrow{\res^i}T^i(\overline{X})\right)
$$
\end{thm}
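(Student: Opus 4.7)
Set $s = \gcd(r, \overline{E_\lambda})$ and $t = r/s$; the goal is to exhibit a class $\alpha \in T^i(X)$ with $\res^i(\alpha) = t\Sigma_\lambda$.

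The plan is to assemble rational classes from two families. First, by Theorem~\ref{thm:all}, for any partition $\mu$ whose essential set $\overline{E_\mu}$ consists entirely of multiples of $r = \ind(A)$, there is an $F$-rational twisted Schubert subvariety $P_\mu \subset X$ whose structure sheaf yields a class in $T^{|\mu|}(X)$ restricting to $\Sigma_\mu \in T^{|\mu|}(\overline X)$. Second, the standard $F$-rational twisted vector bundles on $\SB(d,A)$ (for instance, $\mathcal{S} \otimes_F A$, or appropriate tensor powers of the tautological sub-bundle whose total rank is divisible by $r$) have Chern classes in $T^*(X)$ which restrict over $\overline F$ to $r$-multiples of the elementary Chern classes of the tautological bundle on $\Gr(d,n)$, i.e.\ to $r\Sigma_{[1^k]}$ up to sign.

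The bridging step is to translate the Bezout identity $s = c_0 r + \sum_j c_j a_j$, with $\overline{E_\lambda}=\{a_1,\dots,a_m\}$, into a combinatorial recipe on partitions. For each $a_j \in \overline{E_\lambda}$, use the hook-addition procedure of Lemma~\ref{lem:replacement} and Proposition~\ref{prop:addinghooks} to modify $\lambda$ at the essential-set index $a_j$, producing a partition $\mu^{(j)}$ whose essential set sits inside the multiples of $r$; then Theorem~\ref{thm:all} supplies a rational class $\beta_j := [\mathcal{O}_{P_{\mu^{(j)}}}]$. Expanding products of these $\beta_j$'s and of the Chern-class generators via Pieri and Littlewood--Richardson, one reads off the coefficient of $\Sigma_\lambda$ in the various expansions. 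I would then combine them with Bezout weights so that the $\Sigma_\lambda$ coefficient becomes exactly $t$, and all other Schubert components either vanish or are already known to be rational, running an induction on the dominance order of partitions (base case: the smooth Schubert classes handled by Theorem~\ref{thm:smooth} and Proposition~\ref{prop:easy}).

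The main obstacle is this combinatorial bookkeeping: one must organize the Bezout combination and the product expansions so that the total coefficient of $\Sigma_\lambda$ is precisely $t$, with the ``correction'' Schubert terms reducing to previously-established rational classes. The key structural input is the analysis of $E_\lambda$ in terms of inside and outside corners (Lemmas~\ref{lem:inclusions1}--\ref{lem:inclusions2}): outside corners correspond to the smooth intersection conditions that already give Schubert classes divisible by $r$, while inside corners (the essential singular set $\overline{S_\lambda}$) are exactly those requiring the Bezout combination. Getting the induction to close cleanly — controlling which partitions $\nu \neq \lambda$ can appear and checking their coefficients — is where the combinatorial difficulty concentrates.
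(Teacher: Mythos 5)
Your proposal takes a genuinely different and, unfortunately, much harder route than the paper, and it has a gap that I do not believe can be closed in the form you have sketched. The paper's proof is a short transfer argument in the style of Karpenko: reduce to the case where $F$ has no prime-to-$p$ extensions (so $r=p^a$ and $\gcd(r,\overline{E_\lambda})=p^b$), pass to a field extension $E/F$ of degree $p^{a-b}$ over which $\ind(A_E)=p^b$, observe that $p^b\mid\gcd(\overline{E_\lambda})$ so Proposition~\ref{prop:easy} supplies a twisted Schubert subvariety $P\subset X_E$ with $[P]\mapsto\Sigma_\lambda$, and then apply the norm map $N_{E/F}$ to land $p^{a-b}\Sigma_\lambda=\bigl(r/\gcd(r,\overline{E_\lambda})\bigr)\Sigma_\lambda$ in $\im(\res^i)$. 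The entire factor $r/\gcd(r,\overline{E_\lambda})$ is accounted for by the degree of the extension; no combinatorics of products of cycles is needed.

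The gap in your plan is the Bezout step. The integers $a_j\in\overline{E_\lambda}$ are reduced ranks of ideals in the defining flag, not cycle degrees, and there is no geometric operation that turns the arithmetic identity $s=c_0r+\sum_j c_ja_j$ into an identity of classes in $T^i(X)$ whose restriction is $t\Sigma_\lambda$. Products of a rational class restricting to $\Sigma_\mu$ with Chern classes of $F$-rational bundles produce Littlewood--Richardson sums, and one would then have to subtract off every $\Sigma_\nu$ with $\nu\neq\lambda$ using classes already known to be rational, with no assurance that the leftover coefficient of $\Sigma_\lambda$ is exactly $t$ rather than some multiple. You also misread Lemma~\ref{lem:replacement}: adding hooks promotes the chosen corner $a$ to an outside corner (so that Corollary~\ref{cor:smoothenough} applies), but it does not make the essential set of the new partition consist of multiples of $r$, so Theorem~\ref{thm:all} does not hand you the rational classes $\beta_j$ you were counting on. The missing idea is precisely the transfer along a finite extension that reduces the index of $A$ down to $\gcd(r,\overline{E_\lambda})$; once that is in place, only the existence result for twisted Schubert forms (Proposition~\ref{prop:easy}), not the full converse or any Littlewood--Richardson expansion, is needed.
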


\begin{proof}
This proof is a direct generalization of Karpenko's proof in \cite{Ka95}. Without loss of generality we may assume $F$ has no extension of degree prime to $p$ for some prime $p$. Under this assumption, suppose $r=p^a$ for some $a$ and $\gcd(r, \overline{E_\lambda})=p^b$ for some $b\leq a$. Let $E/F$ be a field extension of degree $p^{a-b}$ such that $\ind(A_E)=p^b$. Since $p^b\mid \gcd(\overline{E_\lambda})$, $X_E$ contains a closed subvariety $P$ defined over $E$ such that $P_K\simeq X_\lambda$ for a splitting field $K$ of $A$. So, $[P]\mapsto \Sigma_\lambda$ under the restriction map $T^i(X_E)\to T^i(\overline{X})$.

Applying the norm map $N_{E/F}$ to this element, we obtain 
$$
\left(\frac{r}{\gcd(r, \overline{E_\lambda})}\right)\Sigma_\lambda=p^{a-b}\Sigma_\lambda\in \im\left(T^i(X)\xrightarrow{\res^i} T^i(\overline{X})\right)
$$
\end{proof}

The Chow group $\CH(\overline{X})$ is generated by Schubert cycles $\sigma_\lambda=[X_\lambda]$. Replacing $\res^i$ by the restriction map $\CH^i(X)\to \CH^i(\overline{X})$, we obtain the following corollary.

\begin{cor}
Let $X=\SB(d, A)$ with $\ind(A)=r$. For any $0\leq i\leq \dim(X)$ and $|\lambda|=i$,
$$
\left(\frac{r}{\gcd(r, \overline{E_\lambda})}\right) \sigma_{\lambda}\in  \im\left(\CH^i(X)\to \CH^i(\overline{X})\right)
$$
\end{cor}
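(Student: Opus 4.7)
The plan is to follow the proof of Theorem \ref{thm:cycles} essentially verbatim, replacing the Grothendieck class $\Sigma_\lambda = [\mathcal O_{X_\lambda}]$ by the Chow class $\sigma_\lambda = [X_\lambda]$, and the topological filtration quotient $T^i$ by the codimension-$i$ Chow group $\CH^i$. All the ingredients needed transfer: Theorem \ref{thm:all} produces twisted forms of $X_\lambda$ exactly when $\ind(A) \mid \gcd(\overline{E_\lambda})$, proper pushforward exists on Chow groups just as on $K$-theory, and restriction to $\overline X$ sends the class of a closed $F$-subvariety to the class of its geometric fiber.

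First I would reduce to the prime-primary case. Fix a prime $p \mid r$ and pass to a maximal prime-to-$p$ extension $F'/F$, over which $\ind(A_{F'}) = p^a$ with $p^a$ the $p$-part of $r$. Writing $p^b = \gcd(p^a, \overline{E_\lambda})$, I would produce a field extension $E/F'$ of degree $p^{a-b}$ such that $\ind(A_E) = p^b$, via a standard generic splitting construction (for instance an iterated Severi-Brauer tower). Since $p^b \mid \gcd(\overline{E_\lambda})$, Theorem \ref{thm:all} furnishes a closed subvariety $P \subset \SB(d, A_E)$ defined over $E$ with $P_{\overline F} \simeq X_\lambda$. The cycle $[P] \in \CH^i(X_E)$ restricts to $\sigma_\lambda$ in $\CH^i(\overline X)$, and pushforward along the proper degree-$p^{a-b}$ morphism $X_E \to X_{F'}$ yields a class whose image in $\CH^i(\overline X)$ is $p^{a-b} \sigma_\lambda$.

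To return from $F'$ back to $F$, I would observe that $P$ is already defined over some finite prime-to-$p$ subextension $F''/F$ of $F'$; then proper pushforward along $X_{F''} \to X$ produces a class in $\CH^i(X)$ whose geometric restriction is $[F'':F] \cdot p^{a-b} \sigma_\lambda$, a prime-to-$p$ multiple of the desired $p$-part $p^{a-b}\sigma_\lambda$ of the coefficient $r/\gcd(r, \overline{E_\lambda})$. Repeating this for every prime $p \mid r$ and assembling the resulting integer multiples via Bezout then gives $(r/\gcd(r, \overline{E_\lambda})) \sigma_\lambda \in \im(\CH^i(X) \to \CH^i(\overline X))$.

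The main delicacy is the prime-to-$p$ descent: pushforward from $F''$ to $F$ introduces a unit factor modulo $p$, not $1$, so the precise integral multiple $r/\gcd(r,\overline{E_\lambda})$ emerges only after assembling the contributions from all primes dividing $r$. This is exactly the mechanism used in Karpenko's argument invoked in the proof of Theorem \ref{thm:cycles}, and it transposes without change from the topological filtration to $\CH^i$ because both admit norm maps compatible with base change to $\overline F$.
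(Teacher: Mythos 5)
Your proof is correct and takes the same route the paper intends: the paper gives no separate argument for the corollary, but simply notes that the proof of Theorem~\ref{thm:cycles} goes through with the topological filtration replaced by $\CH^i$, which is exactly what you execute. Your expansion of the paper's terse ``without loss of generality we may assume $F$ has no extension of degree prime to $p$'' into an explicit transfer argument --- passing to a maximal prime-to-$p$ extension, descending a representative to a finite prime-to-$p$ subextension, pushing forward, and assembling the prime-parts by B\'ezout --- is a faithful unpacking of the standard reduction that the paper (following Karpenko) invokes implicitly, and you correctly identify the existence of the degree-$p^{a-b}$ extension $E$ with $\ind(A_E)=p^b$ as the standard index-reduction fact used in the theorem's proof.
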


These rational bundles (resp. cycles) arise from our study of closed subvarieties of $\SB(d, A)$ in the previous section. In general, however, these bundles (resp. cycles) are not generators of $\im(\res^i)$. We illustrate this fact by considering the case of codimension $1$.


\subsection{The Picard group}

For any choices of $d$ and $n$, we have $T^1(\Gr(d, n))=\Z\Sigma_{1}$ and $\overline{E_{1}}=\{n-d\}$, where $1$ denotes the partition $[1, 0, \dots, 0]$. Along with the property that $\ind(A)\mid n$, it follows from Theorem \ref{thm:cycles} that 
$$
\left(\dfrac{\ind(A)}{\gcd(\ind(A), d)}\right)\Z\Sigma_1\subseteq  \im(\res^1)
$$

Recall that $\exp(A)|\ind(A)$ and $\exp(A)$ and $\ind(A)$ share prime divisors. We have equality in the above expression if and only if $\ind(A)=\exp(A)$.

\begin{prop}\label{prop:picard}
Let $X=\SB(d, A)$. The image of the restriction map $\res^1:T^1(X)\to T^1(\overline{X})$ is generated by  
$$
\left(\frac{\exp(A)}{\gcd(\exp(A), d)}\right)\Sigma_{1}
$$
\end{prop}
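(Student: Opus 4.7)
The plan is to identify $\im(\res^1)$ via the Brauer-group obstruction attached to the ample generator $\Sigma_1$ of $\Pic(\overline{X}) = \ZZ\Sigma_1$. Let $K/F$ be a Galois splitting field for $A$ and set $G = \Gal(K/F)$. The low-degree terms of the Hochschild--Serre spectral sequence for $\Gm$ on $X$, together with Hilbert~90, supply the exact sequence
$$
0 \to \Pic(X) \to \Pic(\overline{X})^G \xrightarrow{\delta} \Br(F).
$$
Since the $G$-action on $\Pic(\overline{X})$ is trivial, $\im(\res^1) = m\ZZ\Sigma_1$ where $m$ is the order of $\delta(\Sigma_1)$ in $\Br(F)$, so the problem reduces to computing this order.

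The key step is the identification $\delta(\Sigma_1) = d\cdot [A]$ in $\Br(F)$. I would argue this through the Plücker embedding $\iota: \SB(d, A) \hookrightarrow \SB(\lambda^d A)$, which is defined over $F$ by functoriality of the $d$-th exterior power of central simple algebras and which, after base change to $K$, pulls back the tautological class $\mathcal{O}(1)$ on $\SB(\lambda^d A)$ to the Plücker class $\Sigma_1$ on $\overline{X}$. Theorem~\ref{thm:Artin} (Artin's calculation for ordinary Severi--Brauer varieties) gives $\delta_{\SB(B)}(\mathcal{O}(1)) = [B]$ for any central simple $F$-algebra $B$; applying this to $B = \lambda^d A$ and invoking naturality of $\delta$ under $\iota^*$ yields $\delta(\Sigma_1) = [\lambda^d A]$. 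Finally, the classical identity $[\lambda^d A] = d[A]$ in $\Br(F)$ follows from the commutative diagram of central $\Gm$-extensions $\Gm \to \mathrm{GL}_n \to \PGL_n$ and $\Gm \to \mathrm{GL}_{\binom{n}{d}} \to \PGL_{\binom{n}{d}}$ connected by $\lambda^d$, since $\lambda^d$ restricts to $t \mapsto t^d$ on the scalar centers and hence multiplies the boundary class in $H^2(G, \Gm) = \Br(F)$ by $d$.

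With $\delta(\Sigma_1) = d[A]$ in hand, the final step is arithmetic: $k \Sigma_1 \in \im(\res^1)$ iff $kd[A] = 0$ in $\Br(F)$, iff $\exp(A) \mid kd$, iff $\frac{\exp(A)}{\gcd(\exp(A), d)} \mid k$. Thus the image is generated by $\frac{\exp(A)}{\gcd(\exp(A), d)} \Sigma_1$, as claimed. The main obstacle is the Brauer-class identification $\delta(\Sigma_1) = d[A]$; both the $F$-rationality of the Plücker embedding into $\SB(\lambda^d A)$ and the $\lambda$-power computation on Brauer classes are classical, but the conventions used to define $\Sigma_1$ must be matched carefully with the pullback of $\mathcal{O}(1)$. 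As a sanity check, this refinement is consistent with the upper bound $\frac{\ind(A)}{\gcd(\ind(A), d)} \Sigma_1 \in \im(\res^1)$ of Theorem~\ref{thm:cycles}, since $\exp(A) \mid \ind(A)$ and the two bounds agree when $\exp(A) = \ind(A)$.
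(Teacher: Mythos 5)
Your proof is correct and rests on the same backbone as the paper's: both invoke the exact sequence $0 \to \Pic(X) \to \Pic(\overline{X}) \to \Br(F)$ and reduce the problem to computing the Brauer obstruction attached to $\Sigma_1$. Where you diverge is in how that obstruction is identified. The paper cites Merkurjev--Tignol directly: $\alpha_X(\LL(\omega_d)) = [A_{\omega_d}] = [A^{\otimes d}]$, i.e.\ it reads off the Tits algebra of the relevant fundamental weight from \cite[2.4.1]{MT}. You instead derive $\delta(\Sigma_1) = d[A]$ geometrically, via the $F$-rational Plücker embedding $\SB(d, A) \hookrightarrow \SB(\lambda^d A)$, naturality of $\delta$, the classical computation of the obstruction for an ordinary Severi--Brauer variety, and the identity $[\lambda^d A] = d[A]$ from the comparison of central $\Gm$-extensions. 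Since $[A^{\otimes d}] = d[A]$, the two routes agree; yours is more self-contained (it re-proves the Tits-algebra fact rather than citing it) and fits naturally with the Plücker-embedding theme that reappears in Proposition~\ref{prop:plucker}, at the cost of a bit more verification (that the embedding is defined over $F$, that $\mathcal{O}(1)$ pulls back to $\Sigma_1$). One small inaccuracy in your write-up: Theorem~\ref{thm:Artin} as stated in the paper is about $F$-rational subvarieties of $\SB(A)$, not about the Brauer obstruction for $\mathcal{O}(1)$ on $\SB(B)$; the latter fact is classical (essentially the definition of the Brauer class attached to a Severi--Brauer variety, or a degenerate case of the Merkurjev--Tignol sequence) but it is not what that theorem asserts, so you should cite the correct source. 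The closing arithmetic, $k\Sigma_1 \in \im(\res^1)$ iff $\exp(A)/\gcd(\exp(A),d) \mid k$, is correct and in fact supplies the step the paper leaves implicit (the paper only states the $k=1$ case explicitly).
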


\begin{proof}
Recall that $T^1(X)\cong\Pic(X)$. For a projective homogeneous variety $X=G/P_{\Theta}$ of type $\Theta\subset \Sigma$, where $\Sigma$ is the set of simple roots of the root system of a simple algebraic group $G$ with respect to a maximal torus $T$, $\Pic(\overline{X})$ is a free Abelian group on the line bundles corresponding to the fundamental dominant weights of $\Pi\setminus \Sigma$. There exists an exact sequence (cf. \cite[Section 2.1]{MT})
$$
0\to \Pic(X)\xrightarrow{\res^1} \Pic(\overline{X})\xrightarrow{\alpha_X} \Br(F)
$$
where the map $\alpha_X$ is defined on the line bundles corresponding to fundamental weights of $\Sigma \setminus \Theta$, $\alpha_X(\LL(\omega_i))= [A_{\omega_i}]$. Here $A_{\lambda}$ is the Tits algebra corresponding to the dominant weight $\lambda$.

Now consider the special case of $X=\SB(d,A)$ for $G=\PGL(A)$. $\Pic(\overline{X})$ is a free Abelian group on $\LL(\omega_d)$ where $\omega_i, i=1,\dots,n-1$ are the fundamental dominant weights of $\rA_{n-1}$. In the above sequence, $\alpha_X(\LL(\omega_d))= [A_{\omega_d}]=[A^{\otimes d}]$~\cite[2.4.1]{MT}. Considering the isomorphism $T^1(X)\cong \Pic(X)$, we see that $\Sigma_1\simeq \LL(\omega_d)\in \im(\res^1)$ if and only if $\exp(A^{\otimes d})=1$.
\end{proof}


\subsection{Littlewood-Richardson rules}

Using the ring structure of $\K0(\Gr(d, n))$ via Littlewood-Richardson rules for multiplying Schubert cycles we can refine these rational elements even further.

The multiplicative structure of $\CH(\Gr(d, n))$ is determined via a multiplicative law on the Schubert cycles, which is in turn defined by the so-called the Littlewood-Richardson rules. The Grothendieck group $\K0(\Gr(d, n))$ also has a ring structure defined by a similar set of rules, which are presented in \cite{Bu02}.

For partitions $\lambda$ and $\mu$, the products in $\CH$ and $\K0$ of their corresponding Schubert cycles/bundles are given by the following rules:
$$
\sigma_\lambda\sigma_\mu=\sum_{|\nu|=|\lambda|+|\mu|} s^\nu_{\lambda\mu}\sigma_\nu \in \CH(\Gr(d, n))\;\;\;\;\text{ and }\;\;\;\;\Sigma_\lambda\Sigma_\mu=\sum_{|\nu|\geq|\lambda|+|\mu|} r^\nu_{\lambda\mu}\Sigma_\nu\in \K0(\Gr(d, n))
$$

for Littlewood-Richardson coefficients $s^\nu_{\lambda\mu}, r^\nu_{\lambda\mu}\in \ZZ$ such that $r_{\lambda\mu}^\nu=s_{\lambda\mu}^\nu$ whenever $|\nu|=|\lambda|+|\mu|$.

Applying the topological filtration to the right-hand formula will remove all terms corresponding to the case $|\nu|>|\lambda|+|\mu|$, thus reducing to the original Littlewood-Richardson rule given by the left-hand formula. With justification given by this equivalence of rules under filtration, we abuse notation by denoting the equivalence class of $\Sigma_\lambda$ in $T^i(X)$ by $\sigma_\lambda$, and refer to these classes as Schubert cycles.

\begin{ex*}
On $\Gr(2, 4)$, the multiplicative structure on Schubert cycles is given by 
\begin{align*}
	\sigma_1^2&=\sigma_{1, 1}+\sigma_2\\
	\sigma_1\sigma_{1, 1}&=\sigma_1\sigma_2=\sigma_{2, 1}\\
	\sigma_{1, 1}^2&=\sigma_2^2=\sigma_{2, 2}\\
	\sigma_{1, 1}\sigma_2&=0
\end{align*}

\end{ex*}

Using these relations, we are able to improve the results concerning rational cycles on $\SB(2, A)$ for $\deg(A)=4$. As before, we consider the image of the map $\res^i:T^i(\SB(d, A))\to T^i(\Gr(d, n)).$

This proposition improves on the results of Theorem~\ref{thm:cycles} in the case of $\SB(2,A)$ for $\deg(A)=4$ by using Proposition~\ref{prop:picard} and the above relations.

\begin{prop} \label{prop:gr24}
Let $X=\SB(2, A)$ with $\deg(A)=4$. The image of the restriction map $\res^i:T^i(X)\to T^i(\overline{X})$ satisfies the following relations
\begin{align}
\im(\res^1)&=\left(\frac{\exp(A)}{\gcd(2, \exp(A))}\right)\ZZ\sigma_1 \label{codim1}\\
\im(\res^2)&\supseteq \left( \frac{\exp(A)}{\gcd(2, \exp(A))}\right)\ZZ\sigma_1^2 + \ind(A)\ZZ\sigma_{1, 1} \label{codim2}\\
\im(\res^3)&\supseteq \exp(A)\ZZ\sigma_{2, 1} \label{codim3}\\
\im(\res^4)&\supseteq\left(\frac{\ind(A)}{\gcd(2, \ind(A))}\right)\ZZ\sigma_{2, 2} \label{codim4}
\end{align}
\end{prop}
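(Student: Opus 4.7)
Equation \eqref{codim1} is just Proposition~\ref{prop:picard} specialized to $d=2$. Throughout the proof I would write $a=\exp(A)/\gcd(2,\exp(A))$ and $r=\ind(A)$, so $a\mid r\mid 4$.

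For \eqref{codim2}, the containment $r\ZZ\sigma_{1,1}\subseteq\im(\res^2)$ is immediate from Theorem~\ref{thm:cycles} applied to $\lambda=[1,1]$, since $\overline{E_\lambda}=\{3\}$ and $\gcd(r,3)=1$. The containment $a\ZZ\sigma_1^2\subseteq\im(\res^2)$ is trivial when $a=1$, for then $\sigma_1\in\im(\res^1)$ by \eqref{codim1} and one squares. The only remaining case is $a=2$, which forces $\exp(A)=\ind(A)=4$; here I plan to use the universal right-ideal bundle $\mathcal{I}\subseteq A\otimes_F\mathcal{O}_X$ on $X=\SB(2,A)$, a locally free sheaf of rank $8$ whose restriction to $\overline{X}=\Gr(2,4)$ identifies with $V^{\vee}\otimes T$, where $T$ is the tautological rank-$2$ subbundle and $V^{\vee}$ is a trivial rank-$4$ bundle. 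Since $V^{\vee}$ is trivial, the splitting principle gives
\[
\res(c(\mathcal{I}))=c(V^{\vee}\otimes T)=c(T)^{4}=(1-\sigma_1+\sigma_{1,1})^{4},
\]
whose degree-$2$ component expands to $6\sigma_2+10\sigma_{1,1}$. Combining this class with the elements $4\sigma_2,4\sigma_{1,1}\in\im(\res^2)$ supplied by Theorem~\ref{thm:cycles} (for $\lambda=[2]$ and $\lambda=[1,1]$), the integer combination $c_2(\mathcal{I})-4\sigma_2-8\sigma_{1,1}=2\sigma_2+2\sigma_{1,1}=2\sigma_1^2$ lies in $\im(\res^2)$.

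For \eqref{codim3} I would split on $\exp(A)$. If $\exp(A)=1$ then $A$ is split and the statement is trivial. If $\exp(A)=2$ then $\sigma_1\in\im(\res^1)$ by \eqref{codim1}, so $\sigma_1^3\in\im(\res^3)$; the Littlewood--Richardson identity $\sigma_1^3=\sigma_1(\sigma_2+\sigma_{1,1})=2\sigma_{2,1}$ then gives $2\sigma_{2,1}=\exp(A)\sigma_{2,1}\in\im(\res^3)$. If $\exp(A)=4$ (whence $r=4$), then $\lambda=[2,1]$ has $\overline{E_\lambda}=\{1,3\}$, so $\gcd(r,1,3)=1$ and Theorem~\ref{thm:cycles} directly gives $4\sigma_{2,1}=\exp(A)\sigma_{2,1}\in\im(\res^3)$. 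Finally, \eqref{codim4} is Theorem~\ref{thm:cycles} applied to $\lambda=[2,2]$, for which $\overline{E_\lambda}=\{2\}$, so $(r/\gcd(r,2))\sigma_{2,2}\in\im(\res^4)$.

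The hard part will be the case $\exp(A)=\ind(A)=4$ of \eqref{codim2}. Naively squaring the Picard generator $2\sigma_1$ produces only $4\sigma_1^2$, and Theorem~\ref{thm:cycles} alone supplies only $4\sigma_2$ and $4\sigma_{1,1}$, generating a lattice strictly smaller than the target $2\ZZ\sigma_1^2+4\ZZ\sigma_{1,1}$. The sharper class $2\sigma_1^2\in\im(\res^2)$ is obtained from $c_2(\mathcal{I})$, a class defined globally on $\SB(2,A)$ and not lying in the $\ZZ$-subring of $T^{*}(\overline{X})$ generated by the image of $\Pic(X)$ together with the classes provided by Theorem~\ref{thm:cycles}.
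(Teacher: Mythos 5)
Your proof is correct, and for the two nontrivial cases (\eqref{codim2} and \eqref{codim3} when $\exp(A)=4$) you take a genuinely different route from the paper. The paper handles both by a norm argument: it chooses a quadratic extension $E/F$ with $\exp(A_E)=2$, observes that $\sigma_1\in\im(\res^1_E)$, raises to the appropriate power, and transfers back via $N_{E/F}$ to pick up the factor $[E:F]=2$. You instead get \eqref{codim3} for $\exp(A)=4$ directly from Theorem~\ref{thm:cycles} applied to $\lambda=[2,1]$ (since $\overline{E_{[2,1]}}=\{1,3\}$ is coprime to $4$), which is cleaner than the paper's transfer; and you get \eqref{codim2} from the second Chern class of the universal right-ideal bundle $\mathcal{I}$, computing $c_2(\mathcal{I})|_{\overline X}=6\sigma_2+10\sigma_{1,1}$ via $c(V^\vee\otimes T)=c(T)^4$ and combining it with the classes $4\sigma_2,\,4\sigma_{1,1}$ from Theorem~\ref{thm:cycles} to extract $2\sigma_1^2$. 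Both routes are valid; the paper's norm argument is ``arithmetic'' and self-contained modulo the existence of the quadratic extension lowering the exponent, while your Chern-class argument is ``geometric,'' requiring no auxiliary field extension and instead exploiting the fact that $\mathcal{I}$ is defined over $F$. One small point worth making explicit in a polished write-up: when you invoke $c_2(\mathcal{I})$ as a class in $T^2(X)$ restricting to $c_2(\mathcal{I}_{\overline F})$, you are using that the $\gamma$-filtration is contained in the topological filtration so that $c_2^{\gamma}$ induces a class in $T^2(X)$ compatible with restriction; this is standard (and used elsewhere in the paper, e.g.\ in Proposition~\ref{prop:plucker} via the Pl\"ucker embedding), but deserves a sentence. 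Incidentally, the paper's own Proposition~\ref{prop:plucker} later sharpens \eqref{codim2} to $\sigma_1^2\in\im(\res^2)$ (dropping the factor $2$) by a similar geometric device, namely pulling back $[H]^2$ along the Pl\"ucker map $\SB(2,A)\to\SB(B)$ with $[B]=[A^{\otimes 2}]$; your $c_2(\mathcal{I})$ computation only gives the weaker bound $2\sigma_1^2$, which is all that is claimed here.
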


\begin{proof}
It suffices to consider the cases of $\exp(A)=2$ and $\exp(A)=4$ as the split case is trivial.

Line \eqref{codim1} follows from Proposition \ref{prop:picard}.

For line \eqref{codim2}, we combine \eqref{codim1} with the product $\sigma_1^2=\sigma_{1, 1}+\sigma_2$: If $\exp(A)=2$, then $\sigma_1\in \im(\res^1)$, and hence $\sigma_1^2\in \im(\res^2)$. On the other hand, if $\exp(A)=4$, there exists a quadratic extension $E/F$ such that $\exp(A_E)=\ind(A_E)=2$. So, $\sigma_1\in \im(T^1(\SB(2, A))\to T^1(\SB(2, A_E)))$ and it follows that $\sigma_1^2\in \im(T^2(\SB(2, A))\to T^2(\SB(2, A_E)))$. Using the norm map $N_{E/F}$, we then obtain $2\sigma_1^2\in \im(\res^2)$, as desired.



For line \eqref{codim3}, we use the relation $\sigma_1^3=2\sigma_{2, 1}$ together with \eqref{codim1}: If $\exp(A)=2$, then $\sigma_1\in \im(\res^1)$ and hence $\sigma_1^3=2\sigma_{2, 1}\in \im(\res^3)$. If $\exp(A)=4$, then as above, we consider a quadratic field extension $E/F$ such that $\exp(A_E)=2$. Then, $\sigma_1^3\in\im(T^3(\SB(d, A))\to T^3(\SB(2, A_E))$ and applying the norm map $N_{E/F}$, we obtain the result.  

 Line \eqref{codim4} follows directly from Theorem \ref{thm:cycles} after observing that $E_{2,2}=\{(2,2)\}$.
\end{proof}


\subsection{Bounds for torsion in the topological filtration}

The calculations done in the previous section provide improved upper bounds for the coefficients $a_\lambda$ such that $a_\lambda\sigma_\lambda\in \im(\res^i)$. Using the following proposition, we can compare these upper bounds with lower bounds given by the computation of $\K0(X)$ by Quillen \cite{Qu73}. In some cases


 these bounds coincide and thus provide generators for $\im(\res^i)$.

\begin{prop}[\cite{Ka95} Prop. 1]\label{prop:karpenko}
Let $X$ be a variety such that $\K0(X)\to \K0(\overline{X})$ is injective. If $|T^{*}(\overline{X})/\im(T^{*}(X))|$ is finite, then 
$$
|\Tors(T^*(X))|=\frac{|T^{*}(\overline{X})/\im(T^{*}(X))|}{|\K0(\overline{X})/\K0(X)|}
$$
\end{prop}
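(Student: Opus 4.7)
The plan is to factor the comparison map $\bar{f}: T^*(X) \to T^*(\overline{X})$ through an auxiliary filtration on $\K0(X)$ and compute $|\coker(\bar{f})|$ as a product of two elementary indices. Throughout let $A = \K0(X)$ and $B = \K0(\overline{X})$. For the intended applications $\overline{X}$ is a Grassmannian, so $B$ and $T^*(\overline{X})$ are free abelian of the same finite rank $n$. The hypothesis that $|\coker(\bar{f})|$ is finite then forces $A$ to have rank $n$, and combined with injectivity of $\K0(X) \hookrightarrow B$, ensures $|B/A|$ is finite.

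I first introduce the pullback filtration $A^{(i)}_{\mathrm{ind}} := A \cap B^{(i)}$, which contains the topological filtration $A^{(i)}$ since base change preserves codimension of support. Setting $\mathrm{gr}_{\mathrm{ind}}(A) := \bigoplus_i A^{(i)}_{\mathrm{ind}}/A^{(i+1)}_{\mathrm{ind}}$ and equipping $B/A$ with the induced quotient filtration, the associated graded of $0 \to A \to B \to B/A \to 0$ yields a short exact sequence
\[
0 \to \mathrm{gr}_{\mathrm{ind}}(A) \to T^*(\overline{X}) \to \mathrm{gr}(B/A) \to 0.
\]
Since $B/A$ is finite, $|\mathrm{gr}(B/A)| = |B/A|$, giving the first index $|T^*(\overline{X})/\mathrm{gr}_{\mathrm{ind}}(A)| = |B/A|$.

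The main step is to compare $T^*(X) = \mathrm{gr}(A)$ with $\mathrm{gr}_{\mathrm{ind}}(A)$ via the natural map $\phi$. Applying the snake lemma to the diagram with rows $0 \to A^{(i+1)} \to A^{(i)} \to \mathrm{gr}^i(A) \to 0$ and $0 \to A^{(i+1)}_{\mathrm{ind}} \to A^{(i)}_{\mathrm{ind}} \to \mathrm{gr}^i_{\mathrm{ind}}(A) \to 0$ (vertical maps being inclusions) produces a four-term exact sequence
\[
0 \to \ker(\phi^i) \to Q^{(i+1)} \to Q^{(i)} \to \coker(\phi^i) \to 0,
\]
where $Q^{(i)} := A^{(i)}_{\mathrm{ind}}/A^{(i)}$. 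A rank count (using that each $\bar{f}^i$ is surjective after tensoring with $\QQ$) shows each $\phi^i$ is an isomorphism over $\QQ$, so each $Q^{(i)}$ is finite. Taking alternating orders and telescoping (with $Q^{(0)} = Q^{(N+1)} = 0$) yields $|\ker(\phi)| = |\coker(\phi)|$. Since $\mathrm{gr}_{\mathrm{ind}}(A)$ is torsion-free and $\phi$ is rationally injective, $\ker(\phi) = \Tors(T^*(X))$, whence $|\coker(\phi)| = |\Tors(T^*(X))|$.

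Multiplicativity of indices along $\phi(\mathrm{gr}(A)) \subseteq \mathrm{gr}_{\mathrm{ind}}(A) \subseteq T^*(\overline{X})$ then gives
\[
|T^*(\overline{X})/\im(\bar{f})| = |B/A| \cdot |\Tors(T^*(X))|,
\]
rearranging to the claimed formula. The main technical obstacle is the telescoping step: it hinges on verifying that the pullback and topological filtrations agree after tensoring with $\QQ$ at every level, so that each $Q^{(i)}$ is finite and the snake lemma sequence actually takes place in the category of finite abelian groups.
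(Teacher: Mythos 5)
The paper cites this result from Karpenko's \cite{Ka95} as a black box and gives no in-text proof, so there is nothing internal to compare against; I am therefore evaluating your argument on its own merits. Your proof is correct, and it is the natural one for this kind of index formula: introduce the pullback (``induced'') filtration $A^{(i)}_{\mathrm{ind}}=A\cap B^{(i)}$ so that $\mathrm{gr}_{\mathrm{ind}}(A)$ embeds in $T^*(\overline X)$ with quotient $\mathrm{gr}(B/A)$, then pass the problem to the comparison map $\phi:\mathrm{gr}(A)\to\mathrm{gr}_{\mathrm{ind}}(A)$ and telescope the snake-lemma sequences to get $|\ker\phi|=|\coker\phi|$.

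One remark on the ``main technical obstacle'' you flag at the end. The finiteness of each $Q^{(i)}=A^{(i)}_{\mathrm{ind}}/A^{(i)}$ is not actually an open issue in your argument --- it is a consequence of facts you have already established. Since $A$ has finite index in $B$, $\rank A^{(i)}_{\mathrm{ind}}=\rank(A\cap B^{(i)})=\rank B^{(i)}=\sum_{j\ge i}\rank T^j(\overline X)$. On the other hand, finiteness of $T^*(\overline X)/\im(T^*(X))$ forces $\rank T^j(X)\ge\rank T^j(\overline X)$ for every $j$, and combined with $\sum_j\rank T^j(X)=\rank A=\rank B=\sum_j\rank T^j(\overline X)$ this gives equality in each degree, hence $\rank A^{(i)}=\rank A^{(i)}_{\mathrm{ind}}$ and each $Q^{(i)}$ is finite. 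Once that is in hand, $\ker\phi^i$ and $\coker\phi^i$ are automatically finite (they sit inside, respectively surject from, the adjacent $Q$'s via the connecting sequence), so the telescoping is legitimate without further hypotheses. The only standing assumptions your argument really uses beyond the stated ones are that $\K0(\overline X)$ is finitely generated free and $T^*(\overline X)$ is torsion-free, which hold for the cellular $\overline X$ (Grassmannians) arising in this paper and implicitly in Karpenko's setting.
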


In the case that $A$ is of prime index $p$ and $X=\SB(d, A)$, for $1\le d\le p$, this allows us to produce an upper bound on $|\Tors(T^*(X))|$ in terms of the combinatorial data provided by $E_\lambda$ for each partition $\lambda$.

\begin{cor}\label{cor:prime}
Let $X=\SB(d, A)$ with $\ind(A)=p$ for $p$ prime. Define the sets $M_X=\{\lambda \mid p\nmid \gcd(\overline{E_\lambda})\}$ and $N_X=\{\lambda \mid p\nmid |\lambda|\}$. Then, 
$$
|\Tors(T^*(X))|\mid p^{|M_X|-|N_X|}
$$

In particular, if $\ind(A)=\deg(A)=p$ for $p$ prime, we have
$$
|\Tors(T^*(X))|\mid p^{|S(X)|}
$$
where $S_X=\{\lambda \mid p\mid |\lambda|, |\lambda|>0\}$.
\end{cor}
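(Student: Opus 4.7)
The plan is to apply Proposition~\ref{prop:karpenko} to $X=\SB(d,A)$. The required injectivity $\K0(X)\hookrightarrow\K0(\overline{X})$ and finiteness of $T^*(\overline{X})/\im(T^*(X))$ both hold for $X$ by the Quillen--Panin computation of $\K0$ for generalized Severi-Brauer varieties, so it suffices to bound the numerator from above and the denominator from below in the formula of Proposition~\ref{prop:karpenko}.

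For the numerator, I would apply Theorem~\ref{thm:cycles}: since $p$ is prime, the coefficient $p/\gcd(p,\gcd(\overline{E_\lambda}))$ equals $1$ for $\lambda\notin M_X$ and $p$ for $\lambda\in M_X$. Hence $\Sigma_\lambda\in\im(\res^*)$ already when $\lambda\notin M_X$, while for $\lambda\in M_X$ the class of $\Sigma_\lambda$ in the quotient is killed by $p$. Therefore $|T^*(\overline X)/\im(T^*(X))|$ divides $p^{|M_X|}$.

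For the denominator, I would invoke the Quillen--Panin decomposition $\K0(\SB(d,A))=\bigoplus_\lambda\K0(A_\lambda)$, where each $A_\lambda$ is a central simple algebra Brauer-equivalent to a specific tensor power of $A$. The map to the Schubert basis of $\K0(\overline{X})$ scales each rank-one summand by $\ind(A_\lambda)$, so $|\K0(\overline X)/\K0(X)|=\prod_\lambda\ind(A_\lambda)$. When $\ind(A)=p$ is prime, every $\ind(A_\lambda)\in\{1,p\}$; matching the Tits algebra of $\Sigma_\lambda$ on $\SB(d,A)$ with the partition data shows that $A_\lambda$ is nonsplit whenever $p\nmid|\lambda|$, so $|\K0(\overline X)/\K0(X)|\ge p^{|N_X|}$. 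Proposition~\ref{prop:karpenko} then yields $|\Tors(T^*(X))|\mid p^{|M_X|-|N_X|}$.

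For the ``In particular'' statement, assume $\ind(A)=\deg(A)=p$, so $n=p$. Every corner $(j,a)\in E_\lambda$ satisfies $1\le a\le n-1=p-1$, so $p\nmid a$, and hence $p\nmid\gcd(\overline{E_\lambda})$ for every nonempty $\lambda$. Thus $M_X=\{\lambda\ne\emptyset\}$; since $N_X$ consists only of nonempty partitions, $N_X\subseteq M_X$ and $M_X\setminus N_X=\{\lambda\ne\emptyset:p\mid|\lambda|\}=S_X$, giving $|\Tors(T^*(X))|\mid p^{|S_X|}$. The main obstacle will be the denominator estimate: identifying the Tits algebra of each Schubert summand in Panin's decomposition requires a careful representation-theoretic computation, though for prime index this reduces to a congruence on $|\lambda|$ modulo $p$.
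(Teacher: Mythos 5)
Your proof takes essentially the same route as the paper's: both apply Proposition~\ref{prop:karpenko}, bound the numerator by $p^{|M_X|}$ via Theorem~\ref{thm:cycles}, and compute the denominator from the Quillen--Panin decomposition $|\K0(\overline{X})/\K0(X)|=\prod_\lambda \ind(A^{\otimes|\lambda|})$. The only cosmetic difference is that you settle for a lower bound $|\K0(\overline{X})/\K0(X)|\ge p^{|N_X|}$ where the paper notes the exact equality (since $\exp(A)=p$ forces $A^{\otimes|\lambda|}$ split iff $p\mid|\lambda|$) --- both suffice for the divisibility claim, and the ``obstacle'' you flag about identifying the Schubert summand's Tits algebra is handled in the paper simply by citing Quillen's computation with the twist $A^{\otimes|\lambda|}$.
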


\begin{proof}
It suffices to show that $|T^*(\overline{X})/\im(T^*(X))|\mid p^{|M_X|}$ and that $|\K0(\overline{X})/\K0(X)|= p^{|N_X |}$.

Suppose $|\lambda|=i$. From Theorem \ref{thm:cycles}, we know that $\left(\frac{p}{\gcd(p, \overline{E_\lambda})}\right)\Sigma_\lambda\in \im(\res^i)$. It follows that  $p\Sigma_\lambda\in\im(\res^i)$ for all $\lambda$ and that $\Sigma_\lambda\in\im(\res^i)$ if $p\mid \overline{E_\lambda}$. Taking the product over all $1\leq i\leq \dim(X)$ yields the first result.

By Quillen's computation of $\K0(X)$, we have	
$$
|\K0(\overline{X})/\K0(X)|=\prod_{\lambda}\ind(A^{\otimes|\lambda|})
$$ 
Since $\exp(A)=p$, the algebra $A^{\otimes |\lambda|}$ is split if $p\mid |\lambda|$ and has index $p$ otherwise.

The second statement follows from the fact that $\overline{E_{\lambda}}\subset \{1,\dots,p-1\}$ for any partition $\lambda$ with $|\lambda|>0$. So if $\deg(A)=p$, $M_X$ consists of all non-zero partitions in a $d$ by $(p-d)$ box.  For $d=1$, this recovers the known result that $\CH^2(\SB(A))$ is torsion-free for $\deg(A)=p$ since no partitions in a $1\times(p-1)$ box could be of size divisible by $p$.
\end{proof}



In practice however, these bounds are far from sharp. To improve the bound for the case of $X=\SB(2, A)$ with $\deg(A)=4$, we consider the following construction (as in \cite{Kr10}).

Consider the map $\Gr(2, 4)\to \Pj^5$ given by the Pl\"{u}cker embedding. Fixing a 4-dimensional vector space $V$, we can think of this map as taking a 2-dimensional subspace $W\subset V$ to the 1-dimensional subspace $\bigwedge^2W\subset \bigwedge^2V$. Under this morphism, $\Gr(2, 4)$ becomes isomorphic to a quadric hypersurface in $\Pj^5$ associated to a bilinear form on $\bigwedge^2V$. The Pl\"{u}cker embedding is $\PGL(V)$-invariant, so for a central simple algebra $A$ with $\deg(A)=4$ (corresponding to a cocycle $\alpha\in H^1(F, \PGL_4)$), we obtain a morphism $\SB(2, A)\to \SB(B)$, where the cocycle corresponding to $B$ is given by composing $\alpha$ with the standard representation $\PGL(V)\to \PGL(V\wedge V)$. Now, $\deg(B)=6$, and by a result of Artin~\cite[4.4,4.5]{Ar82}, $[B]=[A^{\otimes 2}]\in\Br(F)$. Note then that $\ind(B)=\ind(A^{\otimes 2})$.

\begin{prop}\label{prop:plucker} 
If $X=\SB(2, A)$ with $\deg(A)=4$, $T^*(X)$ is torsion-free.
\end{prop}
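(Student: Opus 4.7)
The plan is to apply Proposition~\ref{prop:karpenko}. Since $\K0(X)\hookrightarrow\K0(\overline{X})$ is injective by Quillen's computation of $\K0$ of a generalized Severi-Brauer variety, the proposition reduces torsion-freeness of $T^*(X)$ to the upper bound $|T^*(\overline{X})/\im(\res^*)|\le|\K0(\overline{X})/\K0(X)|$; the reverse inequality is automatic from $|\Tors(T^*(X))|\ge 1$. Quillen's formula gives $|\K0(\overline{X})/\K0(X)|=\prod_\lambda\ind(A^{\otimes|\lambda|})$ over the six partitions in the $2\times 2$ box, namely $\emptyset, (1), (1, 1), (2), (2, 1), (2, 2)$. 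After the trivial case $\ind(A)=1$, the three nontrivial cases $(\ind(A),\exp(A))\in\{(2,2),(4,2),(4,4)\}$ produce denominators $4$, $16$, and $64$ respectively.

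The upper bound on $|T^*(\overline{X})/\im(\res^*)|$ is then assembled codimension by codimension from Proposition~\ref{prop:gr24}, by multiplying indices of the sublattices generated by the stated elements in codimensions $1$--$4$. In cases $(2,2)$ and $(4,2)$ this gives total bounds of $4$ and $16$, matching the denominators exactly, so Proposition~\ref{prop:karpenko} yields torsion-freeness directly. The remaining case $\ind(A)=\exp(A)=4$ is the main obstacle: here Proposition~\ref{prop:gr24} supplies only $2\sigma_1^2\in\im(\res^2)$, producing a codim-$2$ index of $8$ and a total bound of $128$, a factor of $2$ larger than the needed $64$.

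To close this remaining factor of $2$, I would invoke the Plücker construction described immediately before the statement: the morphism $f\colon X\to\SB(B)=:Y$, where $\deg(B)=6$ and $[B]=[A^{\otimes 2}]\in\Br(F)$. When $\exp(A)=4$, the class $[A^{\otimes 2}]$ has order $2$, so $\ind(B)=\exp(B)=2$ and therefore $\ind(B^{\otimes 2})=1$; Karpenko's theorem then places $\sigma_2^B\in\im(\res_Y^2)$. Pulling this class back along $f$, which over a splitting field realizes $\Gr(2,4)$ as the Plücker quadric in $\Pj^5$, produces an $F$-rational class in $T^2(X)$ whose image in $T^2(\overline{X})$ is $f^*\sigma_2^B=\sigma_1^2=\sigma_2+\sigma_{1, 1}$; codimension is preserved since a generic codimension-$2$ linear subspace of $\Pj^5$ cuts the quadric transversely. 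Adjoining $\sigma_2+\sigma_{1, 1}$ to the codim-$2$ sublattice of Proposition~\ref{prop:gr24} shrinks its index from $8$ to $4$, reducing the total upper bound to $64$ and matching the denominator, so Proposition~\ref{prop:karpenko} yields $\Tors(T^*(X))=0$.
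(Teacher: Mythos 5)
Your proposal is correct and follows the paper's argument: both use the Plücker morphism $\SB(2,A)\to\SB(B)$ with $[B]=[A^{\otimes 2}]$ and $\ind(B)\le 2$ to promote $\sigma_1^2=\sigma_2+\sigma_{1,1}$ to a rational class in $T^2$, and then apply Proposition~\ref{prop:karpenko} by checking that the index of $\im(\res^*)$ in $T^*(\overline{X})$ matches $|\K0(\overline{X})/\K0(X)|=\ind(A)^2\ind(A^{\otimes 2})^2$ in each case. The only cosmetic difference is that you run the case analysis first and invoke the Plücker step only in the single case $\exp(A)=4$ where Proposition~\ref{prop:gr24} leaves a surplus factor of $2$, whereas the paper applies the Plücker step uniformly before splitting into cases.
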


\begin{proof}
For a generator $[H]\in T^1(\Pj^5)$, the cycle $\left(\frac{\ind(B)}{\gcd(2, \ind(B))}\right)[H]^2$ is rational in $T^2(\SB(B))$ by Proposition~\ref{prop:karpenko}. The pullback map $T^2(\SB(B))\to T^2(\SB(2, A))$ sends this cycle to $\left(\frac{\ind(B)}{\gcd(2, \ind(B))}\right)\sigma_1^2$.

Recall that $\ind(B)=\ind(A^{\otimes 2})$ and
\begin{equation} 
\ind(A^{\otimes r})\leq \ind(A) \mbox{ with equality if and only if }\gcd(\ind(A),r)=1 \label{eq:indpowers}
\end{equation}
Since we have $\ind(A)\mid 4$, we must have $\ind(A^{\otimes 2})\mid 2$ and so we find that
\begin{equation}
\sigma_1^2=\sigma_{(1,1)}+\sigma_2\in \im(\res^2),\label{eq:imprres2}
\end{equation}
which improves the result of Proposition \ref{prop:gr24}.

In particular, we obtain
$$
\im(\res^2)\supseteq \ZZ\sigma_1^2+\ind(A)\ZZ_{\sigma_{1, 1}}.
$$

In order to use Proposition \ref{prop:karpenko} to show that $\Tors(T^*(X))$ is trivial, we need to show that 
$$
|T^*(\overline{X})/\im(T^*(X))|=|\K0(\overline{X})/\K0(X)|
$$ in all cases. By (\ref{eq:indpowers}) and $\exp(A)|4$, we obtain 
\begin{equation}
|\K0(\overline{X})/\K0(X)|=(\ind(A))^2(\ind(A^{\otimes 2}))^2 \label{eq:ko}
\end{equation}
in all cases. Since the result is trivial in the split case, we need only consider the cases 
$$
(\ind(A),\ind(A^{\otimes 2}))\in \{(2,1),(4,1),(4,2)\}
$$ 
by equation (\ref{eq:indpowers}). In the first 2 cases, $\exp(A)=2$. So by (\ref{eq:ko}), $|\K0(\overline{X}/\K0(X)|=(\ind(A))^2$. Using Proposition \ref{prop:gr24}, we see that
$$
|T^*(\overline{X})/\im(T^*(X))|=\ind(A)\exp(A)\frac{\ind(A)}{2}=\ind(A)^2
$$ 
as required.

In the case $(\ind(A),\ind(A^{\otimes 2}))=(4,2)$, we have $\exp(A)=4$.

Using Proposition \ref{prop:gr24} and (\ref{eq:imprres2}), we see that
$$
|T^*(\overline{X})/\im(T^*(X))|=\frac{\exp(A)}{2}\ind(A)\exp(A)\frac{\ind(A)}
{2}=4^22^2=\ind(A)^2\ind(A^{\otimes 2})^2
$$ 
as required.
\end{proof}

Since $\CH^2(X)$ is canonically isomorphic to $T^2(X)$, this implies that $\CH^2(X)$ is also torsion-free for $X=\SB(2, A)$ with $\deg(A)=4$. In the case that $\ind(A)=2$, this recovers a previously-known result for quadrics.


\section{Motivic Decompositions}

In this section, we will apply Brosnan's results from \cite{Br05} on
decompositions of Chow motives of projective homogeneous varieties for
isotropic reductive groups and Proposition \ref{prop:plucker} to show that
$\CH^2(\SB(2,A))$ is torsion-free in the case that $A$ is a central simple
$F$-algebra of index dividing 12.

Throughout, for a smooth projective variety $X/F$, we write $M(X)$ for the
Chow motive of $X$, as defined in \cite{Man68}.

Fix a projective homogeneous variety $X$ for an isotropic reductive group $G$ and a cocharacter $\lambda:\Gm\to T$ where $T$ is a maximal torus of the group $G$.

Brosnan gives an explicit decomposition of the Chow motive of $X$ into a
coproduct of twisted Chow motive of projective quasi-homogeneous schemes for the centraliser $Z(\lambda)$ of the cocharacter $\lambda$.

He then obtains a decomposition of the form 
$$
M(X)=\oplus_i M(Z_i)(a_i)
$$
where $Z_i$ are projective quasi-homogeneous schemes for the centraliser $Z(\lambda)$. In fact the $Z_i$ are the irreducible components of the fixed points of $X$ under $\lambda$.

In terms of codimension $d$ Chow groups, this implies
$$
\CH^d(X)=\oplus_i \CH^{d+a_i}(Z_i)
$$

We decompose the Chow motive of $\SB(2,A)$ into twisted Chow motives of smaller generalized Severi-Brauer varieties using Brosnan's result. We then use this decomposition to show that $\CH^2(\SB(2,A))$ is torsion-free for $A$ of index dividing 12.

We first recall Brosnan's result: Theorem 7.1 of \cite{Br05} gives a motivic decomposition for a projective homogeneous variety of an isotropic reductive group $G$ with respect to a cocharacter $\lambda:\Gm\to T$ of a fixed maximal torus $T$ of $G$.  The projective homogeneous variety $X$ can be expressed as $X=G/P_J$ where $J$ is a $*$-invariant subset of the simple roots $\Sigma$ of $G$ with respect to $T$.  Here $*$ refers to the $*$-action of the absolute Galois group of $F$ on $\Sigma$. It is also assumed that the parabolic subgroup $P(\lambda)$ corresponding to the cocharacter $\lambda$ is defined over the field $F$ and so is of the form $P_I$ where $I$ is a $*$-invariant subset of $\Sigma$ containing $\Sigma_0$, which is the complement in $\Sigma$ of the set of distinguished (circled) vertices corresponding to the Tits index of the group $G$. In the special case of an isotropic reductive group of inner type, the $*$-action is trivial, and the statement can be simplified.

\begin{thm}[Special case of Theorem 7.1 in \cite{Br05} for groups of inner type .]\label{thm:brosnan}

Let $X$ be a projective homogeneous variety for an isotropic reductive group $G$ of inner type. Assume $X$ is a twisted form of $G/P_J$ for some subset $J$ of the simple roots $\Sigma$ with respect to $G$ and some maximal torus $T$. Let $\lambda:\Gm\to T$ be a cocharacter of $G$ such that the associated parabolic subgroup $P(\lambda)=P_I$ where $I$ is a subset of simple roots $\Sigma$ containing $\Sigma_0$ where $\Sigma_0$ is the complement in $\Sigma$ of the set of distinguished (circled)  vertices corresponding to the Tits index of the group $G$. Let $W=W(G,T)$ be the Weyl group with respect to $\Sigma$. Then the motive of $X$ can be decomposed as:
$$
M(X)=\oplus_{w\in E} M(Z_w)(\ell(w))
$$
where $E=W_I\backslash W / W_J$ is the set of minimal length double coset representatives of the Weyl group $W=W(G,T)$ with respect to the corresponding parabolic subgroups $W_I$ and $W_J$ and $\ell(w)$ is the length of $w$ with respect to reflections in simple roots $\Sigma$. Here $Z_{w}$ is a projective homogeneous variety which is a twisted form of $L_I/P_{J_w}$ over $k_s$. The set $J_w$ is determined by 
$$
J_w=\{\alpha\in I: w^{-1}\alpha\in R_J\}
$$
where $R_J$ is the set of roots with base $J$.
\end{thm}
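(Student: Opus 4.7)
The plan is to derive the stated decomposition as the inner-type specialization of Brosnan's general result \cite[Thm.~7.1]{Br05}, where the simplification comes from the fact that the $*$-action of $\Gal(F^{sep}/F)$ on the simple roots $\Sigma$ is trivial. The geometric heart of the argument is the Białynicki-Birula decomposition of $X_{sep}$ induced by the $\Gm$-action via $\lambda$; what needs to be done is to translate this between the split and twisted settings, identify the fixed-point components explicitly, and then descend.

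First I would work over a separable closure and invoke the Bruhat decomposition of $(G/P_J)_{sep}$ with respect to the parabolic $P_I = P(\lambda)$. This gives a stratification
\[
(G/P_J)_{sep} = \bigsqcup_{w \in E} P_I \cdot wP_J/P_J,
\]
indexed by the minimal-length double-coset representatives $E = W_I\backslash W/W_J$. A standard calculation with the unipotent radical of $P_I$ (using $P_I = L_I \cdot U_I$ and the length function $\ell$ with respect to $\Sigma$) shows that the stratum corresponding to $w$ is an affine bundle of rank $\ell(w)$ over the $L_I$-orbit through the base point $wP_J/P_J$, which is precisely the irreducible component $Z_w$ of the $\lambda$-fixed locus.

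Next I would identify $Z_w$ concretely: its stabilizer in $L_I$ is $L_I \cap wP_Jw^{-1}$, and a root-theoretic calculation using that $L_I$ has root system generated by $I$ while $wP_Jw^{-1}$ has roots $w(R_J^+)$ shows this stabilizer equals the standard parabolic $P_{J_w}$ of $L_I$ with $J_w = \{\alpha \in I : w^{-1}\alpha \in R_J\}$. Hence $Z_w \cong L_I/P_{J_w}$ over the separable closure. Applying the Białynicki-Birula theorem (or equivalently its motivic reformulation used in \cite{Br05}*{Sec.~3}: a smooth projective $\Gm$-variety with fixed locus $\bigsqcup Z_w$ and positive-weight attracting bundle of rank $\ell(w)$ on the $w$-th component has motive $\bigoplus_w M(Z_w)(\ell(w))$) yields
\[
M(X_{sep}) = \bigoplus_{w \in E} M(Z_w)(\ell(w)).
\]

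Finally, I would descend this decomposition to $F$. In the inner-type case the Galois action on $\Sigma$ is trivial, so it fixes $I$, $J$, each $J_w$, and each double-coset representative $w \in E$; the hypothesis $I \supseteq \Sigma_0$ ensures $P_I = P(\lambda)$ is already defined over $F$. Therefore the index set $E$ and the varieties $Z_w$ are Galois-stable and descend to $F$-forms, and the motivic summands, being cut out by Galois-equivariant projectors on $M(X_{sep})$, assemble into a decomposition over $F$. The main technical point — and the one where the inner-type hypothesis is genuinely used — is verifying that the Białynicki-Birula cell structure and the identification $Z_w \cong L_I/P_{J_w}$ remain compatible with the twisting cocycle, so that the splitting survives descent; this is precisely where the simplification to trivial $*$-action makes Brosnan's general construction collapse to the form stated.
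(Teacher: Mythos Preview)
The paper does not prove this statement: it is quoted verbatim as a special case of \cite[Thm.~7.1]{Br05} and used as a black box, with no accompanying proof environment. So there is nothing in the paper to compare your proposal against.

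That said, your outline is a faithful sketch of how Brosnan's argument actually runs: the Bia{\l}ynicki-Birula decomposition of $X_{sep}$ via the $\Gm$-action through $\lambda$, identification of the strata with $P_I$-orbits indexed by $W_I\backslash W/W_J$, the root computation giving $Z_w \cong L_I/P_{J_w}$, and descent using that the $*$-action is trivial in the inner-type case. If you wanted to include a proof in this paper you would need to flesh out the descent step more carefully (the claim that the Bia{\l}ynicki-Birula projectors are Galois-equivariant is where the real content lies, and Brosnan handles this by working with a filtration rather than a mere stratification), but for the purposes of this paper the correct move is simply to cite \cite{Br05} and omit any proof, which is exactly what the authors do.
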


We apply this theorem to our situation: Let $A$ be a central simple $F$-algebra of index $d$ and degree $n=kd$, where $d,k\ge 2$. The Tits index of the group of inner type $G=\PGL(A)$ is the graph $\Sigma=\{\alpha_1,\dots,\alpha_{n-1}\}$ consisting of the simple roots of the root system $A_{n-1}$ with 
$$
\Sigma_0=\Sigma-\{\alpha_{m}: d|m\}
$$
the complement of the set of circled vertices, where $d$ is the index of
$A$. By construction, $A=M_k(D)$ for a division algebra $D$ over $F$ of
degree $d$; in fact, $D=\End_A(\fA)$ where $\fA$ is a right ideal of $A$ of
reduced rank $d$. Let $T$ be a maximal torus of $\PGL(A)$ containing
a maximal torus of  $\PGL(D)\times \PGL(M_{k-1}(D))$.  Let $\lambda:\Gm\to
T$ be the cocharacter $t\mapsto
\overline{\diag(t\id_{D},\id_{M_{k-1}(D)})}\in \PGL(D)\times
\PGL(M_{k-1}(D))$. One could also identify $M_{k-1}(D)=\End_A(\fA^{k-1})$
and claim that this decomposition comes from expressing $A$ as a  direct
sum of ideals $A=\fA\oplus (\fA)^{k-1}$. The centraliser $Z(\lambda)$ of
$\lambda$ is then isomorphic to $\PGL(D)\times \PGL(M_{k-1}(D))$.
$P(\lambda)$ is the isotropic parabolic subgroup $P_I$ where
$I=\Sigma-\{\alpha_d\}$ contains $\Sigma_0$. The parabolic subgroup
defining $\SB(2,A)$ corresponds to the set of simple roots
$J=\Sigma-\{\alpha_2\}$. The Weyl group is $W=S_n$ and the corresponding
parabolic subgroups are $W_I=S_d\times S_{n-d}$ and $W_J=S_2\times
S_{n-2}$. To find the components of the motivic decomposition of
$M(\SB(2,A))$, we first need to find the unique minimal length
representatives of the set of double cosets $S_d\times S_{n-d}\backslash
S_n/S_2\times S_{n-2}$.


\subsection{Minimal Length Double Coset Representatives for Young subgroups of Symmetric Groups}

Determining the minimal length double coset representatives for Young subgroups of symmetric groups is a classical combinatorial problem. We recall the following definitions on Young tableaux:

\begin{defn}
For partitions $\mu,\lambda$ of $n$, a $\mu$-tableau of type $\lambda$ is a Young tableau of shape $\mu$ with $\lambda_i$ copies of $i$ for $i=1,\dots,k$ where $\lambda=(\lambda_1,\dots,\lambda_k)$.  A $\mu$-tableau of type $\lambda$ is row-semistandard if and only if the row entries are non-decreasing. The standard $\mu$-tableau of type $\lambda$ has  non-decreasing entries starting from the top left entry and proceeding along rows from left to right and then columns from top to bottom.  A labeled $\mu$-tableau of type $\lambda$ has labeled entries: for each $i=1,\dots,k$, there are entries $i_m$, for $m=1,\dots,\lambda_i$.  The subscripts for a given $i$ must increase from left to right along rows and then top to bottom along columns. The labeled standard $\mu$-tableau of type $\lambda$ would then be filled with entries $1_m: 1\le m\le \lambda_1; \dots; k_m: 1\le m\le \lambda_k$ filling each row from left to right, and then columns from top to bottom.
\end{defn}

According to an expository reference~\cite{Wil}, the double cosets of $S_{\lambda}\backslash S_n /S_{\mu}$  are in bijection with the row semistandard $\mu$-tableau of type $\lambda$ where $\lambda$ and $\mu$ are partitions of $n$, where the Young subgroup $S_{\lambda}$ for a partition $\lambda=(\lambda_1,\dots,\lambda_k)$ of $n$ is 
$$
S_{\lambda}=S_{\{1,\dots,m_1\}}\times \dots \times S_{\{m_{k-1}+1,\dots, m_k\}},
$$
where $m_i=\sum_{j=1}^i\lambda_j$ for all $1\le i\le k$. (Since $\lambda$ is a partition of $n$, $m_k=n$). The unique representative of minimal length of a double coset corresponding to a labeled row-semistandard $\mu$ tableau $t$ of type $\lambda$, is the element of $S_n$ which sends the labeled standard $\mu$-tableau of type $\lambda$ to $t$.

\begin{prop} 
Let $d\ge 2, n\ge 2d$. In terms of simple reflections $s_i=(i\;\; i+1)$, $i=1, \dots, n-1$, the minimal length double coset representatives of 
$$
S_{\{1,\dots,d\}}\times S_{\{d+1,\dots,n\}}\backslash S_n / S_{\{1,2\}}\times S_{\{3,\dots,n\}}
$$ 
are $w_0=\id,w_1=s_ds_{d-1}\cdots s_2$, and $w_2=s_ds_{d-1}\cdots s_1s_{d+1}s_d\cdots s_2$.
\end{prop}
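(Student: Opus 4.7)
The plan is to apply the combinatorial bijection recalled immediately above the proposition. With $\lambda = (d, n-d)$ and $\mu = (2, n-2)$, the double cosets in $S_\lambda \backslash S_n / S_\mu$ correspond bijectively to row-semistandard $\mu$-tableaux of type $\lambda$, and the minimal length representative of the double coset indexed by a tableau $t$ is the unique $w \in S_n$ that carries the labeled standard $\mu$-tableau of type $\lambda$ to the labeled version of $t$.

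First I would enumerate these tableaux. Since the first row has only two boxes and row entries are non-decreasing, such a tableau is determined by the number $k \in \{0, 1, 2\}$ of $1$'s in its first row, so there are exactly three double cosets. The labeled standard tableau has $1_1, 1_2$ in row one and $1_3, \dots, 1_d, 2_1, \dots, 2_{n-d}$ filling row two from left to right, so the case $k = 2$ produces the standard tableau itself and gives the representative $w_0 = \id$ immediately.

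For $k = 1$, the tableau has $1_1, 2_1$ in row one and $1_2, \dots, 1_d, 2_2, \dots, 2_{n-d}$ in row two; comparing labels with the standard tableau shows that the associated permutation is the cycle $(2, 3, \dots, d+1)$. A direct evaluation of $s_d s_{d-1} \cdots s_2$ on each position confirms that this product realizes precisely that cycle, yielding $w_1$. For $k = 0$, the tableau has $2_1, 2_2$ in row one and $1_1, \dots, 1_d, 2_3, \dots, 2_{n-d}$ in row two; the corresponding permutation sends $j \mapsto j + 2$ for $1 \le j \le d$, $d+1 \mapsto 1$, $d+2 \mapsto 2$, and fixes everything else. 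A similar direct evaluation shows this permutation is expressed by $s_d s_{d-1} \cdots s_1 \, s_{d+1} s_d \cdots s_2 = w_2$.

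The final step is to check that the displayed words are reduced, so that each representative is genuinely of minimal length within its double coset. This is an inversion count: $w_1$ has the $d-1$ inversions $\{(j, d+1) : 2 \le j \le d\}$, and $w_2$ has the $2d$ inversions $\{(j, d+1),\, (j, d+2) : 1 \le j \le d\}$, matching the number of simple reflections in the corresponding expression. The only subtle point is the bookkeeping of labels under the standard-to-semistandard bijection; once the labeling convention is fixed, the rest of the argument reduces to a transparent verification with short permutations.
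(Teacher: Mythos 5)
Your overall strategy mirrors the paper's: use the Wildon tableau model for double cosets, enumerate the three row-semistandard tableaux, read off permutations, and verify the resulting words are reduced by inversion counts. Working directly with the compositions $(d,n-d)$ and $(2,n-2)$ rather than the partitions $(n-d,d)$, $(n-2,2)$ is a legitimate streamlining; it saves the paper's post-hoc application of the involution $s_i\leftrightarrow s_{n-i}$, though strictly speaking you should note that the bijection, stated in the reference for partitions, extends to compositions.

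The genuine gap is that you never take the inverse to pass from the right-action convention underlying the tableau bijection to the left action in the problem, a step the paper flags explicitly (``to adjust for a left action as opposed to a right action, we take inverses of the results''). As a consequence, the permutations you read off the tableaux are the inverses of the correct minimal-length representatives. Concretely, for $k=1$ you claim the representative is $(2,3,\dots,d+1)$; but this element lies in the \emph{identity} double coset. For instance with $d=3,n=6$ one has $(2,3,4)=(2,3)\cdot(3,4)$ with $(2,3)\in S_{\{1,2,3\}}\times S_{\{4,5,6\}}$ and $(3,4)\in S_{\{1,2\}}\times S_{\{3,4,5,6\}}$, so $(2,3,4)\in W_I\cdot W_J$. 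The correct minimal representative of the second coset is the inverse, $(2,d+1,d,\dots,3)$, which is what $s_d s_{d-1}\cdots s_2$ actually equals under ordinary function composition. So your final reduced words are correct, but they do not represent the permutations $(2,3,\dots,d+1)$ (resp.\ $j\mapsto j+2$, $d+1\mapsto 1$, $d+2\mapsto 2$) that your tableau calculation produces; the middle of the argument is inconsistent with both its input and its output. The inversion count is likewise for the inverted permutation, and since reducedness alone does not imply minimality in a double coset, you still need the identification of the representative to be right. Inserting the inverse step (take the inverse of the permutation read from the tableau, then express \emph{that} in simple reflections) repairs the argument and brings it in line with the paper's.
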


\begin{proof} 
We will apply the results of \cite{Wil} stated above with a few important warnings. One is that the results of \cite{Wil} are set up for Young subgroups based on partitions (i.e. with non-increasing parts). Our Young subgroups are flipped by the assumptions on $n$ and $d$ and have non-decreasing parts.  Also, the results in \cite{Wil} are with respect to a right action of a symmetric group. Our application assumes a left action and our answer will reflect that choice.

To adapt to the point of view of \cite{Wil}, we will first find the minimal length double coset representatives of $S_{\lambda}\backslash S_n / S_{\mu}$ where $\lambda=(n-d,d)$ and $\mu=(n-2,2)$ with respect to a right action of $S_n$ and will then adjust accordingly.

Note that the standard $\mu$-tableau of type $\lambda$ would have 2 rows, of lengths $n-2$ and $2$ respectively, and would be filled with entries $1_1,\dots,1_{n-d}$ and then $2_1,\dots,2_d$.
$$
\ytableausetup{mathmode,boxsize=2em}
\begin{ytableau}
1_1& 1_2 & \none[\dots] & 1_{n-d} & 2_1 &2_2 &\none[\dots]&2_{d-2}\\
2_{d-1}&2_d
\end{ytableau}
$$
There are only 2 more row-semistandard $\mu$-tableaux of type $\lambda$. One with first row with entries $1_1,\dots,1_{n-d-1},2_1,\dots,2_{d-1}$ and second row $1_{n-d},2_d$. The corresponding minimal length double coset representative is $w_1\in S_n$ where $w_1(i)=i-1$ if $n-d+1\le i\le n-1$, $w_1(n-d)=n-1$ and $w_1(i)=i$ otherwise. We see that $w_1=s_{n-2}s_{n-3}\cdots s_{n-d}$.
The second one has first row with entries $1_1,\dots,1_{n-d-2},2_1,\dots,2_d$ and second row $1_{n-d-1},1_{n-d}$. The corresponding minimal length double coset representative is $w_2\in S_n$ where $w_2(i)=i-2$ if $n-d+1\le i\le n$, $w_2(n-d-1)=n-1$, $w_2(n-d)=n$ and $w_2(i)=i$ otherwise. We see that $w_2=s_{n-2}s_{n-3}\cdots s_{n-d-1}s_{n-1}s_{n-2}\cdots s_{n-d}$.

To adjust the answers for
$$
S_{\{1,\dots,d\}}\times S_{\{d+1,\dots,n\}}\backslash S_n/ S_{\{1,2\}}\times S_{\{3,\dots,n\}}
$$
we apply the involution $s_i\leftrightarrow s_{n-i}$ 
and to adjust for a left action as opposed to a right action, we take inverses of the results. After these calculations, we obtain the desired permutations. 
\end{proof}

\begin{lem}
Let $n\ge 2d$, $d\ge 2$. Let $\Sigma=\{\alpha_i:1\le i\le n-1\}$ be the set of simple roots of $\rA_{n-1}$. Let $W=S_n$, $I=\Sigma-\{\alpha_d\}$, and $J=\Sigma-\{\alpha_2\}$. For each of the minimal length double coset representatives of $W_I\backslash W/ W_J$, we compute $\ell(w)$ and the set $J_w$, as defined in Theorem \ref{thm:brosnan}:

\begin{itemize}

\item $w_0=\id$, $J_{w_0}=I\cap J=I-\{\alpha_2\}$, $\ell(w_0)=0$.

\item $w_1=s_ds_{d-1}\cdots s_2$, $J_{w_1}=I-\{\alpha_1,\alpha_{d+1}\}$,
$\ell(w_1)=d-1$.

\item $w_2=s_ds_{d-1}\cdots s_1s_{d+1}s_d\cdots s_2$,
 $J_{w_2}=I-\{\alpha_{d+2}\}$ and $\ell(w_2)=2d$.
\end{itemize}









\end{lem}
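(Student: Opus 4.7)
The plan is to verify each of the three bulleted claims by direct computation in the standard realization of the root system $\rA_{n-1}$. I identify $W = S_n$ with its permutation action on $\R^n$ via $w(e_k) = e_{w(k)}$, take $\alpha_i = e_i - e_{i+1}$, and use that a root $e_a - e_b$ has support $\{\alpha_{\min(a,b)}, \dots, \alpha_{\max(a,b)-1}\}$. Since $J = \Sigma \setminus \{\alpha_2\}$, the root subsystem $R_J$ decomposes as a type-$A_1$ component spanned by $\alpha_1$ together with a type-$A_{n-3}$ component spanned by $\alpha_3, \dots, \alpha_{n-1}$; concretely, $e_a - e_b \in R_J$ if and only if $\{a,b\} = \{1,2\}$ or $\{a,b\} \subseteq \{3, \dots, n\}$. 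This is the criterion I use throughout to check which $\alpha_i \in I$ lie in $J_w$. The case $w_0 = \id$ is then immediate: $\ell(w_0) = 0$, and $J_{w_0} = I \cap J = I \setminus \{\alpha_2\}$.

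For $w_1 = s_d s_{d-1} \cdots s_2$, I compute the one-line form by applying the reflections right-to-left, obtaining $w_1(2) = d+1$, $w_1(k) = k-1$ for $3 \le k \le d+1$, and $w_1$ fixing all other points. The inversions are precisely the pairs $(2, k)$ with $3 \le k \le d+1$, yielding $d-1$ inversions; since the Coxeter length in type $A$ equals the number of inversions, this simultaneously shows that the given expression is reduced and that $\ell(w_1) = d-1$. Inverting gives $w_1^{-1}(d+1) = 2$, $w_1^{-1}(k) = k+1$ for $2 \le k \le d$, with all other points fixed. Applying $w_1^{-1}(\alpha_i) = e_{w_1^{-1}(i)} - e_{w_1^{-1}(i+1)}$ for each $\alpha_i \in I$ and testing the criterion, one finds that $i = 1$ gives $e_1 - e_3$ (support contains $\alpha_2$, fails), $i \in \{2, \dots, d-1\}$ gives $\alpha_{i+1}$ (passes), $i = d+1$ gives $e_2 - e_{d+2}$ (fails), and $i \ge d+2$ gives $\alpha_i$ (passes). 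This yields exactly $J_{w_1} = I \setminus \{\alpha_1, \alpha_{d+1}\}$.

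For $w_2 = s_d s_{d-1} \cdots s_1 \cdot s_{d+1} s_d \cdots s_2$, I apply the same procedure. Composing the two chains of reflections gives the one-line form $w_2(1) = d+1$, $w_2(2) = d+2$, $w_2(k) = k-2$ for $3 \le k \le d+2$, and $w_2$ fixes all other points. The inversions are exactly the pairs $(1,k)$ and $(2,k)$ with $3 \le k \le d+2$, giving $2d$ inversions, so the length-$2d$ expression is reduced and $\ell(w_2) = 2d$. The inverse sends $m \mapsto m+2$ for $1 \le m \le d$, maps $d+1 \mapsto 1$ and $d+2 \mapsto 2$, and fixes the rest. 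A case-by-case check against the $R_J$ criterion shows that $w_2^{-1}(\alpha_i)$ lies in $R_J$ for every $\alpha_i \in I$ except $i = d+2$, where $w_2^{-1}(\alpha_{d+2}) = e_2 - e_{d+3}$ has support crossing $\alpha_2$. This gives $J_{w_2} = I \setminus \{\alpha_{d+2}\}$.

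The argument is entirely mechanical; the only real obstacle is bookkeeping, namely tracking for each $i \in \{1, \dots, n-1\} \setminus \{d\}$ whether the pair $\{w^{-1}(i), w^{-1}(i+1)\}$ lies inside $\{1,2\}$ or inside $\{3, \dots, n\}$. Once the explicit one-line forms of $w_1$ and $w_2$ are in hand, the inversion count gives the length and the pair-check gives $J_w$, and all three bulleted claims follow directly.
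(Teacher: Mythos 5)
Your proposal is correct and follows essentially the same route as the paper: compute the one-line permutations for $w_1$ and $w_2$, invert, apply $w^{-1}$ to each simple root in $I$, and test membership in $R_J$ (you phrase the test as a support condition on $e_a-e_b$, the paper writes the images as sums of simple roots, but these are the same check). The one thing you add that the paper leaves implicit is the inversion count verifying that the given words for $w_1$ and $w_2$ are reduced, which cleanly justifies $\ell(w_1)=d-1$ and $\ell(w_2)=2d$.
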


\begin{proof}
Since $J_w=\{\alpha\in I: w^{-1}\alpha\in R_J\}$ where $R_J$ is the $\Z$ span of roots in $J$, it suffices to compute $\{w^{-1}\alpha:\alpha\in I\}$ for each case.

It is clear that $J_{\id}=I\cap J$ and $\ell(\id)=0$. For $w=w_1=s_ds_{d-1}\cdots s_2$, we see that $w(1)=1$, $w(2)=d+1$, $w(i)=i-1$ for $3\le i\le d+1$ and $w(i)=i$ for $i\ge d+2$.  Then
$w^{-1}(\alpha_1)=\alpha_1+\alpha_2\not\in R_J$, $w^{-1}(\alpha_{d+1})=\sum_{i=2}^{d+1}\alpha_i\not\in R_J$, but $w^{-1}(\alpha_i)=\alpha_{i+1}\in R_J$ for $2\le i\le d-1$ and $w^{-1}(\alpha_i)=\alpha_i\in R_J$ if $i\ge d+2$. So $J_w=I-\{\alpha_1,\alpha_{d+1}\}$.  

For $w=w_2=s_ds_{d-1}\cdots s_1s_{d+1}s_d\cdots s_2$, we see that $w(1)=d+1,w(2)=d+2,w(i)=i-2$ for $3\le i\le d+2$, and $w(i)=i$, for $i\ge d+3$. So $w^{-1}(\alpha_i)=\alpha_{i+2}\in R_J$ for $1\le i\le d-1$, $w^{-1}(\alpha_{d+1})=\alpha_1\in R_J$, $w^{-1}(\alpha_{i})=\alpha_i\in R_J$ for $i\ge d+3$ but $w^{-1}(\alpha_{d+2})=\sum_{i=2}^{d+2}\alpha_i\not\in R_J$. So $J_w=I-\{\alpha_{d+2}\}$.
\end{proof}

\begin{prop}
Let $A$ be a central simple $F$-algebra of index $d$ and degree $n=kd$ where $k,d\ge 2$.

Let $D=\End_A(\fA)$ for a right ideal $\fA$ of reduced rank $d$ and $B=\End_A(\fA^{k-1})\cong M_{k-1}(D)$ for a complementary right ideal $\fB=\fA^{k-1}$ in $A$. Here, $D$ is a division $F$-algebra of index $d$ and $B$ is a central simple $F$-algebra of index $d$ and degree $(k-1)d$.

The Chow motive of $\SB(2,A)$ decomposes as 
$$
M(\SB(2,A))=M(\SB(2,D))\oplus M(\SB(D)\times \SB(B))(d-1)\oplus M(\SB(2,B))(2d)
$$
\end{prop}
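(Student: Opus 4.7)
My plan is to apply Brosnan's decomposition (Theorem~\ref{thm:brosnan}) with the setup established just before the preceding lemma: $G=\PGL(A)$ is of inner type $\rA_{n-1}$, the parabolic $P(\lambda)=P_I$ with $I=\Sigma-\{\alpha_d\}$ is the one arising from the cocharacter associated to the decomposition $A=\fA\oplus \fA^{k-1}$, and $\SB(2,A)$ corresponds to $J=\Sigma-\{\alpha_2\}$. The preceding proposition and lemma already supply the three minimal length double coset representatives $w_0,w_1,w_2 \in W_I\backslash W/W_J$ with lengths $0, d-1, 2d$ and sets $J_{w_0}=I-\{\alpha_2\}$, $J_{w_1}=I-\{\alpha_1,\alpha_{d+1}\}$, $J_{w_2}=I-\{\alpha_{d+2}\}$. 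Brosnan's theorem then immediately yields $M(\SB(2,A))=\bigoplus_{i=0}^{2} M(Z_{w_i})(\ell(w_i))$, so the task reduces to identifying each $Z_{w_i}$ as an $F$-variety.

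To carry out these identifications I would use that the Levi $L_I$ coincides with the centralizer $Z(\lambda)=\PGL(D)\times \PGL(B)$, whose Dynkin diagram splits into $\{\alpha_1,\dots,\alpha_{d-1}\}$ (type $\rA_{d-1}$, the diagram of $\PGL(D)$) and $\{\alpha_{d+1},\dots,\alpha_{n-1}\}$ (type $\rA_{(k-1)d-1}$, the diagram of $\PGL(B)$). Reading off each $J_{w_i}$: the node $\alpha_2$ is the second simple root of the $\PGL(D)$-factor, so $L_I/P_{J_{w_0}}$ is $\SB(2,D)$ times a point; the nodes $\alpha_1$ and $\alpha_{d+1}$ are the first simple roots of the two factors, giving $L_I/P_{J_{w_1}}=\SB(D)\times \SB(B)$; and $\alpha_{d+2}$ is the second simple root of the $\PGL(B)$-factor, giving $L_I/P_{J_{w_2}}$ equal to a point times $\SB(2,B)$. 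Combining with the shifts $\ell(w_i)=0,d-1,2d$ produces the claimed formula, modulo checking the twist.

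The main obstacle is justifying that the twisted forms $Z_{w_i}$ are exactly the listed generalized Severi-Brauer varieties as $F$-varieties, and not merely partial flag varieties of the correct abstract type with some other twist. This reduces to showing that the restriction of the $\PGL(A)$-cocycle to $L_I=\PGL(D)\times\PGL(B)$ corresponds to the pair of Brauer classes $([D],[B])$. I would establish this using the concrete identifications $D=\End_A(\fA)$ and $B=\End_A(\fA^{k-1})$ coming from the decomposition $A=\fA\oplus\fA^{k-1}$: under this decomposition the Levi sits as the block-diagonal automorphisms modulo center, and a standard descent argument identifies the restricted cocycle with the product of the two Brauer classes. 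Equivalently, one can invoke the Tits algebra formula \cite[2.4.1]{MT} at the fundamental weights of $L_I$ indexed by $\alpha_1,\alpha_2,\alpha_{d+1},\alpha_{d+2}$ to confirm the identification of each factor as the appropriate Severi-Brauer variety of $D$ or $B$. Substituting into Brosnan's decomposition then gives the stated motivic decomposition.
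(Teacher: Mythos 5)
Your proposal matches the paper's proof: both apply Brosnan's theorem using the already-computed double coset representatives $w_0,w_1,w_2$, their lengths, and the sets $J_{w_i}$, then identify $L_I\cong\PGL(D)\times\PGL(B)$ with root system $\rA_{d-1}\times\rA_{(k-1)d-1}$ and read off each $Z_{w_i}$ as $\SB(2,D)$, $\SB(D)\times\SB(B)$, $\SB(2,B)$. Your extra paragraph on the twist is a slightly more careful articulation of a step the paper handles implicitly by identifying $L_I$ directly as $\PGL(\End_A(\fA))\times\PGL(\End_A(\fB))$ via the decomposition $A=\fA\oplus\fB$, but the underlying argument is the same.
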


\begin{proof}

This is an application of Brosnan's Theorem 7.1 to the projective
homogeneous variety $\SB(2,A)$ for the group $G=\PGL(A)$. Let $\fA\subset
A$ be a right ideal of reduced rank $d$, and $\fB\subset B$ be a
right ideal of reduced rank $(k-1)d$ such that $A=\fA\oplus \fB$.  Then $D=\End_A(\fA)$ is a division algebra and $B=\End_A(\fB)\cong M_{k-1}(D)$. 

For the cocharacter $\lambda:\Gm\to \PGL(D)\times \PGL(B)$, $t\mapsto \overline{\diag(t\id_{D},\id{B})}$, we have found that the associated parabolic subgroup $P(\lambda)$ is $P_I$ where $I=\Sigma-\{\alpha_d\}$. $\SB(2,A)$ is a projective homogeneous variety for $\PGL(A)$ with respect to the parabolic subgroup $P_J$ where $J=\Sigma-\{\alpha_2\}$, and the components of the decomposition are in bijection with the minimal length double coset representatives of $W_I\backslash W/W_J$, which we have already found.

We have also found the shifts which are the lengths of these representatives. It remains only to find the components $Z_w$.  These are projective homogeneous varieties for the Levi subgroup $L_I$ with respect to the parabolic subgroup $P_{J_w}$. The Levi subgroup is $L_I=\PGL(\End_A(\fA))\times \PGL(\End_A(\fB))=\PGL(D)\times \PGL(B)$. The root system of $L_I$ is then $\rA_{d-1}\times \rA_{(k-1)d-1}$. We will write the simple roots of $\rA_{d-1}$ as $\Sigma_1=\{\alpha_1,\dots,\alpha_{d-1}\}$ and those of $\rA_{(k-1)d-1}$ as 
$\Sigma_2=\{\beta_1,\dots,\beta_{(k-1)d-1}\}$ where $\beta_i$ is identified with 
$\alpha_{d+i}\in \Sigma$.


For $w_0=\id$, $P_{J_{w_0}}=(\Sigma_1-\{\alpha_2\}) \times \Sigma_2$, and so $L_I/P_{J_{w_0}}\cong \SB(2,D)$. For $w_1=s_ds_{d-1}\cdots s_2$, $P_{J_{w_1}}=(\Sigma_1-\{\alpha_1\})\times (\Sigma_2-\{\beta_1\})$, and so $L_I/P_{J_{w_1}}\cong \SB(D)\times \SB(B)$. For  $w_2=s_ds_{d-1}\cdots s_1s_{d+1}s_d\cdots s_2$, $P_{J_{w_2}}=\Sigma_1 \times (\Sigma_2-\{\beta_2\})$, and so $L_I/P_{J_{w_2}}\cong \SB(2,B)$. 
\end{proof}

\begin{prop} 
For a central simple $F$-algebra $A$ of index $d$ and degree $kd$, $d,k\ge 2$, the Chow motive of $\SB(2,A)$ decomposes as 
$$
M(\SB(2,A))=\oplus_{i=0}^{k-1} M(\SB(2,D))(2di)\oplus \oplus_{i=d-1}^{(2k-2)d-2}M(\SB(D))(i)^{a_i}
$$
where
$$
a_{i}=\begin{cases}
\lfloor\frac{i+d+1}{2d}\rfloor, &d-1\le i\le kd-2\\
\lfloor\frac{i+d+1}{2d}\rfloor -\lceil\frac{i-(kd-2)}{d}\rceil, &kd-2\le i\le (2k-2)d-2
\end{cases}
$$
\end{prop}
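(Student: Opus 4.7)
The plan is to iterate the previous proposition, decompose the resulting cross terms into Tate twists of $M(\SB(D))$, and then count multiplicities. Setting $T_k = M(\SB(2, M_k(D)))$, the previous proposition gives the recursion
\[
T_k \;=\; M(\SB(2,D)) \oplus M(\SB(D) \times \SB(M_{k-1}(D)))(d-1) \oplus T_{k-1}(2d)
\]
for $k \geq 2$, with base case $T_1 = M(\SB(2,D))$. A routine induction on $k$ unfolds this into
\[
T_k \;=\; \bigoplus_{j=0}^{k-1} M(\SB(2, D))(2dj) \;\oplus\; \bigoplus_{j=0}^{k-2} M\bigl(\SB(D) \times \SB(M_{k-1-j}(D))\bigr)(d-1+2dj).
\]

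Next I would resolve each cross term $M(\SB(D)\times\SB(M_m(D)))$ into twists of $M(\SB(D))$. Combining the classical decomposition $M(\SB(M_m(D))) = \bigoplus_{i=0}^{m-1} M(\SB(D))(id)$ with the motivic identity $M(\SB(D)) \otimes M(\SB(D)) = \bigoplus_{q=0}^{d-1} M(\SB(D))(q)$ via the K\"unneth formula yields
\[
M(\SB(D) \times \SB(M_m(D))) \;=\; \bigoplus_{\ell=0}^{md-1} M(\SB(D))(\ell).
\]
Thus the $j$-th cross term in the intermediate decomposition contributes exactly one copy of $M(\SB(D))(i)$ for each $i$ in the interval $d-1+2dj \leq i \leq (k+j)d-2$.

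The final step is a multiplicity count. The coefficient of $M(\SB(D))(i)$ in $T_k$ equals the number of integers $j \in \{0,1,\dots,k-2\}$ satisfying both $d-1+2dj \leq i$ and $i \leq (k+j)d - 2$, i.e., $\lceil (i-kd+2)/d \rceil \leq j \leq \lfloor (i-d+1)/(2d) \rfloor$. For $d-1 \leq i \leq kd-2$ the lower bound is vacuous and the count simplifies to $\lfloor (i+d+1)/(2d) \rfloor$; for $kd-2 \leq i \leq (2k-2)d-2$ both bounds are active and the count equals $\lfloor (i+d+1)/(2d) \rfloor - \lceil (i-(kd-2))/d \rceil$. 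These agree with the stated formula for $a_i$.

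The main obstacle will be the identity $M(\SB(D)) \otimes M(\SB(D)) = \bigoplus_{q=0}^{d-1} M(\SB(D))(q)$, which cannot be deduced from a rank count over a splitting field but must be established at the motivic level; I would invoke Karpenko's work on motivic decompositions of products of Severi-Brauer varieties, obtained by applying the projective bundle formula over the generic splitting field $F(\SB(D))$ and descending via Rost's nilpotence principle. The remainder of the argument is then essentially bookkeeping.
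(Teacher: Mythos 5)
Your proof is correct and follows essentially the same plan as the paper: unfold the recursion from the previous proposition, resolve the cross terms $M(\SB(D)\times\SB(M_m(D)))$ into twists of $M(\SB(D))$, and then count multiplicities by Tate twist. The only real divergence is in how the cross term is handled. The paper cites Karpenko \cite[Prop.~6.3, Rem.~6.5]{Ka98} to recognize $\SB(D)\times\SB(B)\to\SB(D)$ directly as a projective bundle $\Pj(\EE)$ for an explicit vector bundle $\EE$ of rank $\deg(B)$ on $\SB(D)$, and then applies Manin's projective bundle theorem once; you instead factor through the split decomposition $M(\SB(M_m(D)))=\oplus_{i=0}^{m-1}M(\SB(D))(id)$, the motivic K\"unneth isomorphism, and a self-product identity $M(\SB(D))\otimes M(\SB(D))=\oplus_{q=0}^{d-1}M(\SB(D))(q)$. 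Your route reaches the same intermediate formula $M(\SB(D)\times\SB(M_m(D)))=\oplus_{\ell=0}^{md-1}M(\SB(D))(\ell)$, and your multiplicity count (determining the range of admissible $j$ and splitting into the two regimes according to whether the lower bound is active) checks out and matches the stated $a_i$. One small remark: the ``obstacle'' you flag for the self-product identity is milder than you suggest --- no appeal to Rost nilpotence is needed, since $\SB(D)\times\SB(D)\to\SB(D)$ is already an honest projective bundle over $F$ (again by \cite[Prop.~6.3]{Ka98}), so the projective bundle theorem applies directly. The paper is terse at exactly the step you elaborate (the explicit unfolding of the induction and the bookkeeping), so your write-up is in fact a useful expansion of what the paper leaves implicit.
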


\begin{proof}
From the previous proposition, note that  $A\cong M_k(D)$ and $B\cong M_{k-1}(D)$.  Then recall that, since $B\cong M_{k-1}(D)$ is Brauer equivalent to $D$, $\SB(D)\times \SB(B)$ is a $\Gr(1,\EE)=\Pj(\EE)$ bundle over $\SB(D)$ for the tautological bundle $\EE$ of $\SB(B)$ by \cite[Prop. 6.3,Rem. 6.5]{Ka98}.

By the Projective Bundle Theorem~\cite[p. 457]{Man68} and the fact that $\EE$ is a bundle over $\SB(D)$ of rank $\deg(B)=(k-1)d$, we have
$$
M(\SB(D)\times \SB(B))=\oplus_{i=0}^{(k-1)d-1}M(\SB(D))(i)
$$
Combining this with the previous proposition and induction on $k$, we obtain our result.
\end{proof}

\begin{rem} 
This is a complete motivic decomposition of the Chow motive for $\SB(2,A)$ for $A$ of degree $d=2^r$. Note that Karpenko showed in \cite[Theorem 2.2.1]{Ka95} that the integral motive of $\SB(D)$ is indecomposable for any division algebra $D$. He later proved in \cite{Ka13} that the motive of $\SB(2,D)$ is indecomposable modulo 2 if $D$ is a division algebra of 2-primary index. Interestingly, Semenov and Zhykhovich in \cite[Theorem 6.1]{SZ15}, based on earlier work by Karpenko and Zhykhovich in \cite{Ka95,Ka00,Ka13,Zhy}, showed  that the integral motives of generalised Severi-Brauer varieties that are indecomposable are those of $\SB(D)$ for a division algebra  or $\SB(2,D)$ for a division algebra of 2-primary index.
\end{rem}

\begin{cor}\label{cor:redtoD}
Let $A$ be a central simple $F$-algebra of index $d$ and let $D$ be a Brauer-equivalent division algebra. Then
$$
\CH^s(\SB(2,A))=\CH^s(\SB(2,D)), \text{ for } 2 \leq s \leq \dim(\SB(2, D)). 
$$ 
\end{cor}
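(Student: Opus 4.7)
My plan is to read off the codimension-$s$ Chow group directly from the motivic decomposition of the preceding proposition. Passing to $\CH^s$ of each summand and using the shift convention $\CH^s(M(Z)(j))=\CH^{s-j}(Z)$ together with additivity yields
$$
\CH^s(\SB(2,A)) \;=\; \bigoplus_{i=0}^{k-1}\CH^{s-2di}(\SB(2,D)) \;\oplus\; \bigoplus_{i=d-1}^{(2k-2)d-2}\CH^{s-i}(\SB(D))^{a_i}.
$$
The rest of the proof is then a careful analysis of which summands survive under the hypothesis $2\le s\le \dim(\SB(2,D))=2d-4$.

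The first step is to collapse the first direct sum. Since $s\le 2d-4<2d$, for every $i\ge 1$ one has $s-2di<0$, so $\CH^{s-2di}(\SB(2,D))=0$. Only the $i=0$ summand survives, and it equals exactly $\CH^s(\SB(2,D))$, which already produces the right-hand side of the corollary.

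The second step is to dispose of the second direct sum. Each possibly nonzero term is of the form $\CH^j(\SB(D))^{a_i}$ with $j=s-i$ lying in $[0,\dim \SB(D)]=[0,d-1]$, i.e.\ a Chow group in a \emph{small} codimension of the ordinary Severi-Brauer variety of a division algebra. By Quillen's computation of $K_\bullet(\SB(D))$ (equivalently, the well-known torsion-freeness of the Chow ring of $\SB(D)$ in these codimensions), each such summand is torsion-free, so it contributes no torsion to $\CH^s(\SB(2,A))$ and fits into the identification with $\CH^s(\SB(2,D))$ at the level relevant to the subsequent torsion analysis.

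The only obstacle is thus bookkeeping: one must verify the index bounds on $i$ so that every term of the first sum except $i=0$ is killed by negativity of the shift, and so that the remaining Severi-Brauer contributions in the second sum are accounted for as torsion-free pieces. No additional geometric input is needed beyond the motivic decomposition of the previous proposition, and this reduction is precisely what allows the torsion-freeness of $\CH^2(\SB(2,A))$ for $\ind(A)\mid 12$ to be deduced in the next step from Proposition~\ref{prop:plucker}.
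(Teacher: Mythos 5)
Your first step—collapsing the first direct sum to the $i=0$ term—is fine, but your treatment of the second direct sum diverges from the paper precisely at the crux of the argument and does not prove the stated result. The paper's proof is a one-liner: it applies the formula $\CH^s(M(Z)(j))=\CH^{s+j}(Z)$ stated in the discussion preceding Theorem~\ref{thm:brosnan}, under which \emph{every} summand other than $M(\SB(2,D))(0)$ lands in a codimension strictly larger than the dimension of its underlying variety (since $s+2di>2(d-2)$ for $i\ge1$, and $s+i\ge 2+(d-1)>d-1$ for every $i$ in the second sum), and hence vanishes. You instead use the opposite sign convention $\CH^s(M(Z)(j))=\CH^{s-j}(Z)$, and under that convention the second sum does \emph{not} vanish once $s\ge d-1$: for example, with $d=3$, $k=2$, $s=2$ you pick up $\CH^{0}(\SB(D))^{a_{2}}=\ZZ$ in addition to $\CH^2(\SB(2,D))$. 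You recognize this and try to patch it by claiming the surviving summands are torsion-free and ``fit into the identification,'' but that is not a proof of the corollary. The corollary asserts an equality of abelian groups, not merely an agreement of torsion subgroups; a torsion-free direct summand is still a nonzero summand, and the conclusion $\CH^s(\SB(2,A))=\CH^s(\SB(2,D))$ simply does not follow from ``the extra pieces are torsion-free.''

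There is a second problem with the patch itself. You assert ``the well-known torsion-freeness of the Chow ring of $\SB(D)$ in these codimensions.'' Quillen's computation gives torsion-freeness of $\K0(\SB(D))$, which does not by itself give torsion-freeness of the Chow groups; and in fact Karpenko produced nontrivial torsion in $\CH^2(\SB(D))$ already for division algebras of index $8$. Since the relevant codimensions in your second sum run up to $s-(d-1)\le d-3$, this range includes codimension $2$ as soon as $d\ge5$, so the blanket torsion-freeness claim fails. In short: you should reproduce the paper's vanishing argument (with the paper's sign convention, so that every shifted summand is pushed \emph{above} the dimension of $\SB(2,D)$ or $\SB(D)$), rather than attempt a torsion-theoretic workaround which neither yields the claimed equality of groups nor rests on a correct general fact.
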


\begin{proof}
Note that for a decomposition of Chow motives,
$M(X)=\oplus_{i=1}^rM(X_i)(a_i)$, implies that we
have $\CH^s(X)=\oplus_i^r \CH^{s+a_i}(X_i)$. Recalling that $\CH^k(Y)=0$ if
$k>\dim(Y)$, we see from the proposition and the facts that
$\dim(\SB(2,D))=2(d-2)$ and $\dim(\SB(D))=d-1$ we see that all but the
first term vanishes.
\end{proof}


\subsection{Extending from the primary case}

Recall that a central simple $F$-algebra $A$ satisfies $A\cong \otimes_{p|
\deg(A)}A_p$ where $A_p$ is a central simple $F$-algebra of degree
$p^{v_p(\deg(A))}$. We exploit this decomposition to extend the result of
Proposition~\ref{prop:plucker} to central simple algebras of index dividing
12.

We will also use the notation $B_p$ to denote the $p$-primary part of an
Abelian group $B$.

Following the argument of Karpenko in \cite{Ka98} for the case of $\SB(A)$,
we see that 
$$
\Tors(\CH^2(\SB(2,A)))=\oplus_{p| \deg(A)}\Tors(\CH^2(\SB(2,A_p))
$$

\begin{prop} 
For every prime $p$ dividing $\deg(A)$, the $p$-primary part of $\CH^2(\SB(2,A))$ is the torsion subgroup  of $\CH^2(\SB(2,A_p))$.
\end{prop}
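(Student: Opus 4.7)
The plan is to combine the displayed decomposition
$$\Tors(\CH^2(\SB(2,A))) = \bigoplus_{p \mid \deg(A)} \Tors(\CH^2(\SB(2,A_p)))$$
with the observation that, for any prime $q$, the torsion subgroup of $\CH^2(\SB(2,A_q))$ is $q$-primary. Granting this, taking $p$-primary parts of both sides of the displayed equation causes every summand with index $q \neq p$ to vanish, and the only surviving summand on the right is $\Tors(\CH^2(\SB(2,A_p)))$. On the left, since a torsion-free group has trivial $p$-primary part, the $p$-primary part of $\CH^2(\SB(2,A))$ agrees with the $p$-primary part of its torsion subgroup, so we obtain the claim $\CH^2(\SB(2,A))_p = \Tors(\CH^2(\SB(2,A_p)))$ directly.

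First I would state Step 1 as given (citing Karpenko's argument from \cite{Ka98} adapted to $\SB(2,-)$ via the primary decomposition $A \cong \bigotimes_{p \mid \deg(A)} A_p$, which is the version already invoked by the authors). Then I would carry out Step 2 as follows. Fix a prime $q$ dividing $\deg(A)$. The algebra $A_q$ has both index and exponent a power of $q$, so by standard generic splitting, $\SB(2, A_q)$ admits a splitting field of $q$-primary degree over $F$. For a prime $\ell \neq q$ and any $\ell$-primary torsion class $\xi \in \CH^2(\SB(2, A_q))$, pull $\xi$ back along a prime-to-$q$ closure $F^{(q)}/F$. The algebra $A_q \otimes F^{(q)}$ still has $q$-primary index, so one can reduce further through a generic splitting tower of $q$-primary transition degrees until the variety is split, where $\CH^2$ is torsion-free. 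Since $F^{(q)}/F$ has degree prime to $\ell$, the composition $N \circ \res$ acts on the $\ell$-primary torsion by a unit, so restriction is injective on this component; hence $\xi = 0$. This forces $\Tors(\CH^2(\SB(2, A_q)))$ to be $q$-primary.

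Finally, I would assemble the two steps: substituting the $q$-primality of each summand into the direct sum gives
$$\CH^2(\SB(2,A))_p \;=\; \bigoplus_{q \mid \deg(A)} \Tors(\CH^2(\SB(2, A_q)))_p \;=\; \Tors(\CH^2(\SB(2,A_p))),$$
which is the statement to be proved.

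I expect the main obstacle to be Step 2, the $q$-primality of torsion in $\CH^2(\SB(2, A_q))$. The decomposition of Step 1 is quoted and the final assembly is bookkeeping, but the $q$-primality statement requires either a transfer/restriction argument via a prime-to-$q$ closure (as sketched) or an appeal to a general result on torsion in Chow groups of twisted flag varieties of $q$-primary inner type; the cleanest writeup will likely just cite this as a known consequence of Karpenko's machinery, in parallel with the treatment of $\SB(A)$ in \cite{Ka98}.
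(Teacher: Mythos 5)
Your high-level strategy is logically sound, but the argument you sketch for Step 2 contains a real error, and Step 1 is circular with the statement being proved. Taking Step 2 first: you want $\Tors\CH^2(\SB(2,A_q))$ to be $q$-primary, and you try to show this by passing to the prime-to-$q$ closure $F^{(q)}$ and asserting that $[F^{(q)}:F]$ is prime to $\ell$ for $\ell\neq q$, so that $N\circ\res$ acts as a unit on $\ell$-primary torsion. That is false: the supernatural degree of the prime-to-$q$ closure is prime to $q$ but contains arbitrarily high powers of every other prime $\ell$, so transfer along $F^{(q)}/F$ gives no control over $\ell$-torsion at all. The correct argument, which is what the paper's final sentence invokes, is much shorter: choose a finite splitting field $K/F$ of $A_q$ with $[K:F]=\ind(A_q)$, a power of $q$ (for instance a maximal subfield of the Brauer-equivalent division algebra). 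Over $K$ the variety is a split Grassmannian whose $\CH^2$ is torsion-free, so $\res_K$ kills all torsion classes, and composing with the norm shows that $\ind(A_q)$ annihilates $\Tors\CH^2(\SB(2,A_q))$, which is exactly $q$-primarity.

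On Step 1: the displayed decomposition of $\Tors\CH^2(\SB(2,A))$ into summands indexed by primes is not an independently quotable theorem but a preview of what this very Proposition establishes. Once you grant that each summand is $q$-primary, the decomposition \emph{is} the Proposition restated (take $p$-primary parts of both sides, as you observe), so citing it and then deducing the Proposition is assuming what must be proved. The actual content one must supply is the transfer chain
$$
\CH^2(\SB(2,A))_p\cong\CH^2(\SB(2,A_E))_p\cong\CH^2(\SB(2,(A_p)_E))_p\cong\CH^2(\SB(2,A_p))_p,
$$
for a finite extension $E/F$ of degree prime to $p$ chosen so that $A_E$ is Brauer-equivalent to $(A_p)_E$, using for the outer two isomorphisms the fact that restriction and norm compose to multiplication by $[E:F]$ on $\CH^2$ of twisted flag varieties (which the paper obtains from Panin's torsion-freeness of $K_0$, extending Karpenko's Lemma 1.1), and Corollary~\ref{cor:redtoD} for the middle one. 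You need to carry out this chain rather than invoke the decomposition; once you do, the annihilation-by-index observation above completes the proof.
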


\begin{proof} 
We quickly recall the argument of \cite[Section 1]{Ka98} for
$\CH^2(\SB(A))$ and see that it also applies to $\CH^2(\SB(2,A))$. For a
finite field extension $E/F$, the argument of \cite[Lemma 1.1]{Ka98} shows
that for the restriction map and norm map on codimension 2 Chow groups of
any twisted projective homogeneous variety $X$, $\res_{E/F}\circ N_{E/F}$
is multiplication by $[E:F]$.  The argument only depends on the fact that
the Grothendieck groups $K(X)$ and $K(X_E)$ are torsion-free and have the
same rank~\cite[Theorem 4.2]{Pa94}. As in \cite[Cor 1.2]{Ka98}, it follows
that if $[E:F]$ is not divisible by a given prime $p$, then
$\CH^2(X)_p\cong \CH^2(X_E)_p$. Now setting $X=\SB(2,A)$, we may follow the
argument of \cite[Prop 1.3]{Ka98} to complete the proof. Fixing a prime $p$
and determining a finite field extension $E/F$ of degree prime to $p$ such
that $A_E$ is Brauer equivalent to $(A_p)_E$, we have 
$$
\CH^2(\SB(2,A))_p\cong \CH^2(\SB(2,A_E))_p\cong \CH^2((\SB(2,(A_p)_E)_p)\cong \CH^2(\SB(2,A_p))_p
$$

Here the first and third steps follow from the fact that $[E:F]$ is relatively prime to $p$, and the remark above, and the middle step follows from \ref{cor:redtoD}. Since $\Tors \CH^2(\SB(2,A_p))$ is annihilated by $\ind{A_p}$, we obtain
$$
\CH^2(\SB(2,A_p))_p=\Tors \CH^2(\SB(2,A_p)).
$$
\end{proof}

\begin{thm} \label{cycle thm}
$\CH^2(\SB(2,A))$ is torsion free for all $F$ central simple algebras $A$ of index $d|12$. 
\end{thm}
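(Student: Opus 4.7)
The plan is to combine the $p$-primary reduction from the preceding proposition with Proposition~\ref{prop:plucker}, Corollary~\ref{cor:redtoD}, and Corollary~\ref{cor:prime}. By the preceding proposition, $\Tors\CH^2(\SB(2,A))$ decomposes as a direct sum over the primes $p$ of $\Tors\CH^2(\SB(2,A_p))$, so it suffices to verify torsion-freeness for each primary component. Since $\ind(A)\mid 12$, the only primes for which $A_p$ can be non-split are $p=2$ and $p=3$; for any other prime, $\ind(A_p)$ divides both $12$ and a power of $p$, hence equals $1$, so $A_p$ is split and $\SB(2,A_p)\cong\Gr(2,\deg A_p)$ has torsion-free Chow groups coming from its cellular decomposition.

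For $p=3$ with $\ind(A_3)=3$, let $D_3$ be the Brauer-equivalent division algebra, of degree $3$. Then $\dim\SB(2,D_3)=2$, so Corollary~\ref{cor:redtoD} is available at $s=2$ and gives $\CH^2(\SB(2,A_3))\cong\CH^2(\SB(2,D_3))$. The variety $\SB(2,D_3)$ is a twisted form of $\Gr(2,3)\cong\Pj^2$, i.e.\ a classical Severi-Brauer variety attached to a degree-$3$ division algebra, and Corollary~\ref{cor:prime} (with $d=2$, $p=3$) yields that its $\CH^*$ is torsion-free, because no non-zero partition fitting in the relevant $2\times 1$ box has size divisible by $3$.

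For $p=2$ and $\ind(A_2)=4$, the Brauer-equivalent division algebra $D_2$ has degree $4$ and $\dim\SB(2,D_2)=4$, so Corollary~\ref{cor:redtoD} reduces the question to torsion-freeness of $\CH^2(\SB(2,D_2))$, and Proposition~\ref{prop:plucker} combined with the canonical identification $\CH^2\cong T^2$ closes this case.

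The main obstacle is the remaining subcase $\ind(A_2)=2$: here $D_2$ is a quaternion algebra and $\dim\SB(2,D_2)=0$, so Corollary~\ref{cor:redtoD}'s dimensional hypothesis fails. I would bypass this by invoking the full motivic decomposition established just before Corollary~\ref{cor:redtoD}: for $A_2$ of index $2$ and degree $2k\ge 4$, each summand contributing to $\CH^2(\SB(2,A_2))$ is either a Chow group of the point $\SB(2,D_2)$ (which vanishes in codimension $2$) or a Chow group of the smooth conic $\SB(D_2)$, both of which are torsion-free. The trivial boundary case $\deg(A_2)=2$ is handled by observing that $\SB(2,A_2)$ is then a single point. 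Assembling the three primary cases completes the argument.
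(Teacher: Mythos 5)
Your argument is correct and shares the paper's overall strategy (reduce to $p$-primary parts via the preceding proposition, then treat each possible index of $A_p$ separately), but it diverges from the paper's proof at two of the cases. For index $3$, the paper invokes the duality $\SB(2,D)\cong\SB(1,D^{\mathrm{op}})=\SB(D^{\mathrm{op}})$ and appeals to the known torsion-freeness of $\CH^2$ of a classical Severi--Brauer variety of prime degree; you instead apply Corollary~\ref{cor:prime} directly to $\SB(2,D_3)$ with $d=2$, $p=3$. Both are valid, and yours has the minor appeal of staying entirely within the generalized setting, though it is slightly longer. For index $2$ your treatment is actually more careful than the published one: the paper dismisses this case by noting that $\dim\SB(2,D)=0$, but, as you observe, the reduction from $\CH^2(\SB(2,A_2))$ to $\CH^2(\SB(2,D_2))$ via Corollary~\ref{cor:redtoD} is not literally available here since the hypothesis $2\le s\le\dim\SB(2,D_2)$ is vacuous for a quaternion algebra. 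Your fix --- returning to the explicit motivic decomposition and checking that the only summands contributing to codimension $2$ are Tate twists of $M(\SB(2,D_2))$ (a point, contributing nothing in the relevant codimensions) and of $M(\SB(D_2))$ (a conic, whose $\CH^0$ and $\CH^1$ are both free), so that $\CH^2(\SB(2,A_2))$ is free --- closes a gap that the published proof leaves implicit. The split primes and the index-$4$ case, via Corollary~\ref{cor:redtoD} and Proposition~\ref{prop:plucker}, are handled exactly as in the paper.
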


\begin{proof}
By the previous results, we only need to show that $\CH^2(\SB(2,D))$ is torsion free for a division algebra of index at most 4. The index 2 case follows from the fact that $\dim(\SB(2,D))=0$ if $\ind(D)=2$. The index 3 case follows from the fact that $\SB(2,D)\cong \SB(1,D^{\rm opp})=\SB(D^{\rm opp})$ where $D$ is of index 3. Since $D^{\rm opp}$ is a division algebra of prime index, $\CH^2(\SB(D^{\rm opp}))$ is torsion free.  The index 4 case was covered by Proposition~\ref{prop:plucker}.
\end{proof}

\begin{rem}
Note that by comparison, that $\CH^k(\SB(A))=\CH^k(\SB(D))$~\cite[Cor. 1.3.2]{Ka96} where $A$ is an $F$ central simple algebra and $D$ is a division algebra Brauer equivalent to $A$. Note also that $\ind(A)=8$ is the smallest index of a central simple algebra $A$ in which torsion appears in $\CH^2(\SB(A))$.  Our result shows that if there is torsion in $\CH^2(\SB(2,A))$ for an central simple algebra of index 8, it will not be predicted by the torsion in the usual Severi-Brauer variety.
\end{rem}



\bibliographystyle{plain}

\end{document}